\renewcommand*{\backref}[1]{}
\renewcommand*{\backrefalt}[4]{[{\tiny%
		\ifcase #1 Not cited.%
		\or Cited on page~#2.%
		\else Cited on pages #2.%
		\fi%
	}]}
\newcommand{\cay}[2]{\mathrm{Cay}(#1,#2)}
\newcommand{\Z}{\mathbb{Z}}
\newcommand{\R}{\mathbb{R}}
\renewcommand{\P}{\mathbb{P}}
\newcommand{\E}{\mathbb{E}}
\newcommand{\supp}[1]{\mathrm{supp}(#1)}
\newcommand{\re}[1]{\mathrm{Re}\left(#1\right)}
\newcommand{\pesc}[1]{p_{\mathrm{esc}}(#1)}
\newcommand{\thistheoremname}{}
\newtheorem*{genericthm*}{\thistheoremname}
\newenvironment{namedthm*}[1]
{\renewcommand{\thistheoremname}{#1}%
	\begin{genericthm*}}
	{\end{genericthm*}}
\theoremstyle{plain}
\newtheorem{thm}{Theorem}[section] 
\newtheorem{prop}[thm]{Proposition}
\newtheorem{lem}[thm]{Lemma}
\newtheorem{cor}[thm]{Corollary}
\theoremstyle{definition}
\newtheorem{rem}[thm]{Remark}
\newtheorem{claim}[thm]{Claim}
\newtheorem{defn}[thm]{Definition} 
\newtheorem{question}[thm]{Question}
\author{Eduardo Silva} 
\email[Eduardo Silva]{eduardo.silva@uni-muenster.de, edosilvamuller@gmail.com}
\address{University of Münster, Einsteinstrasse 62, Münster 48149, Germany}
\urladdr{\url{https://edoasd.github.io/eduardo_silva_math/}}
\subjclass[2020]{20F69, 60B15, 60J50, 20F65, 43A05}
\keywords{Asymptotic entropy, Poisson boundary, Wreath products, Harmonic measures}
\title[Continuity of asymptotic entropy on wreath products]{Continuity of asymptotic entropy on wreath products}
\begin{document}
	
	\maketitle
	\begin{abstract} 
	We prove the continuity of asymptotic entropy as a function of the step distribution for non-degenerate probability measures with finite entropy on wreath products $ A \wr B = \bigoplus_B A \rtimes B $, where $A$ is any countable group and $B$ is a countable hyper-FC-central group that contains a finitely generated subgroup of at least cubic growth. As one step in proving the above, we show that on any countable group $G$ the probability that the $\mu$-random walk on $G$ never returns to the identity is continuous in $\mu$, for measures $\mu$ such that the semigroup generated by the support of $\mu$ contains a finitely generated subgroup of at least cubic growth. Finally, we show that among random walks on a group $G$ that admit a separable completely metrizable space $X$ as a model for their Poisson boundary, the weak continuity of the associated harmonic measures on $X$ implies the continuity of the asymptotic entropy. This result recovers the continuity of asymptotic entropy on known cases, such as Gromov hyperbolic groups and acylindrically hyperbolic groups, and extends it to new classes of groups, including linear groups and groups acting on $\mathrm{CAT}(0)$ spaces.	
\end{abstract}
\section{Introduction}\label{section: introduction}
Asymptotic entropy is a fundamental quantity in understanding the long-term behavior of random walks on countable infinite groups. For a probability measure $\mu$ on a group $G$, the \emph{Shannon entropy} of $\mu$ is defined as $H(\mu)\coloneqq -\sum_{g\in G}\mu(g)\log(\mu(g))$.

\begin{defn}\label{defn: asymptotic entropy} The \emph{asymptotic entropy} of a probability measure $\mu$ on a countable group $G$ is defined as
	\[h(\mu)\coloneqq\lim_{n\to \infty} \frac{H(\mu^{*n})}{n}.\]
\end{defn}
This notion was introduced by Avez \cite{Avez1972}, who proved that if $\mu$ is finitely supported and $h(\mu)=0$, then the random walk $(G,\mu)$ has the \emph{Liouville property}, meaning that every bounded $\mu$-harmonic function $f:G\to \R$ is constant on the cosets of $\langle \supp{\mu}\rangle$. We recall that a function $f:G\to \R$ is called \emph{$\mu$-harmonic} if $f(g)=\sum_{h\in G}f(gh)\mu(h)$, for all $g\in G$. The \emph{entropy criterion} of Derriennic \cite{Derrienic1980} and Kaimanovich-Vershik \cite[Theorem 1.1]{KaimanovcihVershik1983} states that if $H(\mu)<\infty$, then $h(\mu)>0$ if and only if $(G,\mu)$ does not have the Liouville property. Recall that $\mu$ is called non-degenerate if $G=\langle \supp{\mu}\rangle_{+}$ (the semigroup generated by $\supp{\mu}$). If $G$ is a non-amenable group, then any non-degenerate probability measure $\mu$ on $G$ does not have the Liouville property \cite[Proposition II.1]{Azencott1970} \cite{Furstenberg1973}. Hence, if in addition $H(\mu)<\infty$, then $h(\mu)>0$. Reciprocally, every amenable group admits a non-degenerate symmetric probability measure $\mu$ with the Liouville property \cite[Theorem 1.10]{Rosenblatt1981} \cite[Theorem 4.4]{KaimanovcihVershik1983}. In general, one cannot guarantee in this statement that $H(\mu)<\infty$. Indeed, there are amenable groups on which every non-degenerate probability measure $\mu$ with $H(\mu)<\infty$ does not have the Liouville property \cite[Theorem 3.1]{Erschler2004Liouville}.

The value $h(\mu)$ determines that, asymptotically, the $n$-th instant of the $\mu$-random walk belongs to a subset of $G$ of size roughly $\exp(h(\mu)n)$; see Remark \ref{rem: entropy as long scale Hausdorff dimension} for a precise statement. For random walks with a finite first moment on a finitely generated group, another related quantity is the asymptotic drift $\ell(\mu)\coloneqq \lim_{n\to \infty} \E[|w_n|]/n$ of the random walk $\{w_n\}_{n\ge 0}$ with respect to a word metric $|\cdot|$. In many classes of groups, the ratio $h(\mu)/\ell(\mu)$ coincides with the Hausdorff dimension of the hitting measure of the $\mu$-random walk on a geometric boundary of the group $G$. This is the case for free groups \cite[Theorem 2.1.3]{Kaimanovich1998} and hyperbolic groups \cite[Theorem 3.1]{LePrince2008}; see  \cite{Tanaka2019,DussauleYang2023} for further families of groups where this equality holds. We also refer the reader to \cite{Vershik2000} for a more extensive explanation on the connection between the asymptotic entropy of random walks and other dynamical and algebraic properties of the group.

The question of whether the function $\mu\mapsto h(\mu)$ on a given group $G$ is continuous was first explicitly asked in \cite[Section 6, Item 5]{Erschler2004Liouville}, and later the first positive results on continuity of asymptotic entropy were given in \cite{Erschler2011}. One motivation for this problem is that there are multiple situations exhibiting discontinuity in asymptotic entropy within some classes of (amenable) groups. The first examples of such discontinuity were given by Kaimanovich \cite[Theorems 3.1 and 4.1]{Kaimanovich1983nontrivial}, who showed that certain locally finite groups -- such as the group of finitely supported permutations of a countable set and the wreath product $\mathbb{Z}/2\mathbb{Z} \wr (\bigoplus_{\mathbb{N}} \mathbb{Z}/2\mathbb{Z})$ --  admit infinitely supported probability measures with finite entropy, and which do not have the Liouville property; see also \cite[Section 6, Remark 3]{KaimanovcihVershik1983}. By the entropy criterion, such measures have positive asymptotic entropy, which cannot be approximated by the asymptotic entropy of finitely supported measures. The above examples involve infinitely supported measures, but discontinuity can also occur for probability measures supported on a fixed finite symmetric subset of a group. Indeed, such an example is given in \cite[Lemma 4]{Erschler2011} on the group $\Z/2\Z\wr D_{\infty}$, where $D_{\infty}$ is the infinite dihedral group. It is claimed by Vershik in the paragraph following Lemma 2 in \cite[page 682]{Vershik2000} that the estimates from \cite{Vershik1999} can be used to show that asymptotic entropy is continuous in the set of non-degenerate symmetric probability measures supported on a fixed finite generating set of a group. Note that Erschler's example does not contradict Vershik's claim, since it consists of a sequence of non-symmetric probability measures that converge to a degenerate probability measure. However, we have not been able to reconstruct Vershik's proof, and we believe that the author might have meant to say ``upper-semicontinuous'' instead of ``continuous'', which is enough to prove \cite[Lemma 2]{Vershik2000}. We remark that the results we prove in the current paper guarantee the continuity of asymptotic entropy for possibly non-symmetric and infinitely supported probability measures.

In the literature there are several results on regularity properties of asymptotic entropy that go beyond continuity, such as Lipschitz continuity, differentiability and analyticity. Families of groups studied so far include free groups \cite{Ledrappier2012}, hyperbolic groups \cite{Ledrappier2013,Gouezel2017,GilchLedrappier2013} and acylindrically hyperbolic groups \cite[Theorem 13.2]{MathieuSisto2020}, for probability measures with suitable moment conditions and that in particular include measures supported on a fixed finite symmetric generating subset of the group. In the current paper we focus exclusively on continuity results, and the study of further regularity properties of asymptotic entropy on wreath products will be developed elsewhere.

\subsection{Continuity of entropy on wreath products}
Our first main result shows that asymptotic entropy is continuous on wreath products $A\wr B\coloneqq \bigoplus_B A\rtimes B$ where the base group $B$ is a hyper-FC-central group with sufficiently fast growth. Recall that a group $G$ is called \emph{hyper-FC-central} if $G$ has no non-trivial quotient on which any non-trivial element has an infinite conjugacy class. Recall also that a group $ G $ has \emph{at least cubic growth} if there is a finite symmetric subset $ S\subseteq G$ such that $\liminf_{n\to\infty} |S^n|/n^3>0$. Any finitely generated subgroup of a countable group that does not have at least cubic growth is has a finite index subgroup that is either trivial, $\Z$ or $\Z^2$.
\begin{thm}\label{thm: main corollary continuity of asymptotic entropy over Zd}
	Let $A$ be a countable group and let $B$ be a countable hyper-FC-central group of at least cubic growth. Let $\mu$ be a non-degenerate probability measure $\mu$ on $A\wr B$ with $H(\mu)<\infty$. Consider a sequence $\{\mu_k\}_{k\ge 1}$ of probability measures on $A\wr B$ with $H(\mu_k)<\infty$ for all $k\ge 1$, such that $\lim_{k\to \infty }\mu_k(g)=\mu(g) $ for every $g \in A\wr B$ and $\lim_{k\to \infty} H(\mu_k)=H(\mu)$. Then, we have $\lim_{k\to \infty}h(\mu_k)=h(\mu)$.
\end{thm}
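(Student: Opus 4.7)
The plan is to realize the Poisson boundaries of $(A\wr B,\mu)$ and of $(A\wr B,\mu_k)$ on a common Polish space $X$, verify the weak continuity of the associated harmonic measures, and then invoke the third main result of the paper (continuity of asymptotic entropy under weak continuity of harmonic measures) to conclude. Let $\pi\colon A\wr B\to B$ denote the base projection. Pointwise convergence $\mu_k\to\mu$ gives $\pi_\ast\mu_k\to\pi_\ast\mu$ pointwise, and since $\mu(g)>0$ forces $\mu_k(g)>0$ for $k$ large, each $\mu_k$ is non-degenerate for $k$ large enough. It is classical that every non-degenerate random walk with finite entropy on a hyper-FC-central group is Liouville, so the projected walks $\pi_\ast\mu$ and $\pi_\ast\mu_k$ have trivial Poisson boundary. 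The growth assumption on $B$ places $\pi_\ast\mu$ and $\pi_\ast\mu_k$ in the regime of the continuity-of-escape-probability theorem stated in the abstract, which yields $\pesc{\pi_\ast\mu}>0$ and $\pesc{\pi_\ast\mu_k}\to\pesc{\pi_\ast\mu}$; in particular these projected walks are transient.

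By the classical description of the Poisson boundary of a wreath product whose base has trivial boundary and whose projected walk is transient, the lamp-configuration processes $\{f_n\}_{n\ge 0}$ stabilize almost surely to a random limit $f_\infty\in A^B$, and this stabilized configuration is a realization of the Poisson boundary of $(A\wr B,\mu)$. The space $X=A^B$ with the product topology is Polish (as $A$ and $B$ are countable) and serves as a common model for the Poisson boundaries of $\mu$ and of every eventually non-degenerate $\mu_k$. The harmonic measure on $X$ is the law $\nu_\mu$ of $f_\infty$ under the walk started at the identity.

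To invoke the third main result it remains to verify weak convergence $\nu_{\mu_k}\to\nu_\mu$. Since the cylinder sets of $X$ form a clopen basis generating its Borel $\sigma$-algebra, it suffices to show, for any finite family $b_1,\dots,b_m\in B$ and $a_1,\dots,a_m\in A$, convergence of $\P_{\mu_k}\bigl(f_\infty(b_1)=a_1,\dots,f_\infty(b_m)=a_m\bigr)$ to $\P_\mu\bigl(f_\infty(b_1)=a_1,\dots,f_\infty(b_m)=a_m\bigr)$. For each site $b\in B$, the value $f_\infty(b)$ is determined by the almost surely finite sequence of visits of the projected walk to $b$ together with the lamp updates performed during those visits. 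Every finite-horizon event of this type depends only on finitely many convolution values $\mu^{\ast n}(g)$ and $\mu_k^{\ast n}(g)$, which converge under the hypothesis of pointwise convergence of the step distributions. The convergence $\pesc{\pi_\ast\mu_k}\to\pesc{\pi_\ast\mu}>0$ is then used to furnish the uniform-in-$k$ tail control needed to pass from finite-horizon convergence to convergence of the full cylinder probabilities.

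The hard part will be this last step: the uniform control on the tail of the number of visits of the projected walks to each site. This is exactly what the continuity of escape probability (the second main result) is designed to provide, and it is the reason the cubic-growth hypothesis on $B$ enters the statement. Once the weak convergence of harmonic measures is in place, the third main result of the paper delivers $h(\mu_k)\to h(\mu)$.
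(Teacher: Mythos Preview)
Your approach has a genuine gap at its core. You claim that transience of the projected walk on $B$ forces the lamp configuration $\{\varphi_n\}_{n\ge 0}$ to stabilize almost surely, yielding a model $X=A^B$ for the Poisson boundary. This is false in the generality of the theorem: lamp stabilization is known for measures with a finite first moment, but it can fail for measures that merely have finite entropy. A single increment $(f,y)$ with $\supp{f}$ very large may modify the lamp at a site $b$ even when the projected walk is nowhere near $b$, so transience of the base alone does not force $\varphi_n(b)$ to freeze. The paper states this explicitly in Subsection~\ref{subsection: wreath products}: there are measures with finite $(1-\varepsilon)$-moment for every $\varepsilon>0$ for which stabilization fails, and in those cases \emph{no explicit $\mu$-boundary is known}. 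The introduction even singles this out as the novelty of Theorem~\ref{thm: main corollary continuity of asymptotic entropy over Zd}: it is ``the first one to establish the continuity of asymptotic entropy in a setting where the Poisson boundary remains unidentified.'' Your plan to invoke Theorem~\ref{thm: convergence of harmonic measures} therefore cannot cover the full statement, because there is no candidate Polish model $X$ to put the harmonic measures on.

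The paper avoids this obstacle entirely: rather than passing through any boundary model, it proves a direct entropy inequality (Proposition~\ref{prop: final entropy estimates for continuity of entropy on wreath products}) showing that $H_{\mu_k}(w_n)/n$ is, up to controllable error, bounded below by $H_{\mu_k}(w_{Nt_0})/(Nt_0)$ for suitable $N,t_0$. The machinery of Sections~\ref{section: entropy estimates}--\ref{section: entropy lamps inside} (coarse trajectory, bad increments, unstable points) is precisely what replaces the missing boundary identification; the continuity of escape probability enters not to control cylinder probabilities of a limit configuration, but to get the uniform-in-$k$ transience estimates (Proposition~\ref{prop: uniform decay for return to a finite subset}) that make these entropy bounds work simultaneously for all $\mu_k$. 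Two smaller issues: the $\mu_k$ are not assumed non-degenerate (Theorem~\ref{thm: convergence of harmonic measures} requires this, and pointwise convergence does not obviously supply it), and even when stabilization holds, identifying $(A^B,\nu)$ as the \emph{full} Poisson boundary rather than just a $\mu$-boundary is itself a nontrivial theorem that you would need to cite with matching hypotheses.
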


For a finitely generated group, being hyper-FC-central is equivalent to having a nilpotent subgroup of finite index \cite[Theorem 2]{DuguidMcLain1956} \cite[Theorem 2]{McLain1956} (see also \cite{FrischPooya2018} for a self-contained proof of this result). Therefore, hyper-FC-central groups of at least cubic growth contain a copy of either $\Z^3$ or the Heisenberg group $H_3(\Z)$. Theorem \ref{thm: main corollary continuity of asymptotic entropy over Zd} applies in particular when $B = \Z^d$, for $d \geq 3$. This is the first known result that establishes the continuity of asymptotic entropy for amenable groups that are not hyper-FC-central, and which works for probability measures with an infinite support. Additionally, it is the first result to address arbitrary sequences of finitely supported probability measures on an amenable group. The only prior result in this direction is \cite[Theorem 5]{Erschler2011}, which shows the continuity of asymptotic entropy for $A \wr \Z$ with $A$ finite, under the assumption that the supports of all probability measures $\mu$, $\mu_k$, $k \geq 1$, are contained in a fixed finite subset of $A \wr \Z$. Note that Erschler's example of discontinuity consider wreath products over $D_{\infty}$, which has linear growth. This shows that the hypothesis of at least cubic growth of $B$ cannot be removed. The hypotheses of the convergence of the Shannon entropies of the measures is also natural, since for infinitely supported probability measures it is possible to have the pointwise convergence $\lim_{k\to \infty}\mu_k(g)=\mu(g)$ for each $g\in G$ and still not have the convergence of $\{H(\mu_k)\}_{k\ge 1}$ to $H(\mu)$. Such examples would also provide probability measures for which asymptotic entropy is not continuous. Note that if one further supposes that all the measures $\{\mu_k\}_{k\ge 1}$ are supported on a fixed finite subset of $G$, then the convergence of Shannon entropies is implied by the pointwise convergence of the probability measures. This is the setting considered by Erschler in \cite{Erschler2011}, called ``strong convergence'' there. The non-degeneracy assumption on the probability measure $\mu$ in Theorem \ref{thm: main corollary continuity of asymptotic entropy over Zd} can be replaced with the condition that the semigroup generated by $\supp{\mu}$ is a subgroup of $A\wr B$ whose projection to the base group $B$ has at least cubic growth. We stated Theorem \ref{thm: main corollary continuity of asymptotic entropy over Zd} with the non-degeneracy assumption for clarity in the exposition of the result.

 A feature of Theorem \ref{thm: main corollary continuity of asymptotic entropy over Zd} is that it holds for all non-degenerate probability measures with finite entropy. For all such measures on $A \wr \Z^d$, the Poisson boundary is non-trivial \cite[Theorem 3.1]{Erschler2004Liouville}. This makes Theorem \ref{thm: main corollary continuity of asymptotic entropy over Zd} the first one to establish the continuity of asymptotic entropy in a setting where the Poisson boundary remains unidentified. Theorem \ref{thm: main corollary continuity of asymptotic entropy over Zd} is a consequence of a more general criterion for the continuity of asymptotic entropy on wreath products, which is stated in Section \ref{section: proof of the main theorem} (see Theorem \ref{thm: continuity asymptotic entropy wreath prods}). In Subsection \ref{subsection: sketch of proof continuity of entropy wreath products} we sketch the main ideas behind the proof of Theorem \ref{thm: main corollary continuity of asymptotic entropy over Zd}.

The entropy of a random walk on a wreath product $A \wr B$ at time $n$ is closely related to the number of distinct positions visited by the projection to the base group $B$ between times $0$ and $n$. This, in turn, is linked to the return probability of the induced random walk on the base group (see Subsection \ref{subsection: escape prob and the asymptotic range}). To state our next result, we introduce the following notation for the probability that a random walk on a group never returns to the identity element.
\begin{defn}\label{def: escape proba} For a given probability measure $\mu$ on a countable group $G$, we define its associated \emph{escape probability} by $\pesc{\mu}\coloneqq \P(w_n\neq e_G \text{ for every }n\ge 1)$.
\end{defn}

Recall that the $\mu$-random walk on $G$ is called transient if $\pesc{\mu}>0$, and recurrent otherwise.  

The following theorem on the continuity of the escape probability plays a role in the proof of Theorem \ref{thm: main corollary continuity of asymptotic entropy over Zd}. Recall that $\langle \supp{\mu}\rangle_{+}$ denotes the semigroup generated by the support of $\mu$.

\begin{thm}\label{thm: continuity of range} Let $\mu$ be a probability measure on a countable group $G$. Suppose that $\langle \supp{\mu}\rangle_{+}$ contains a finitely generated subgroup of at least cubic growth. Let $\{\mu_k\}_{k\ge 1}$ be a sequence of probability measures on $G$ such that $\lim_{k\to \infty}\mu_{k}(g)=\mu(g)$ for every $g\in G$. Then, we have $\lim_{k\to \infty}\pesc{\mu_k}=\pesc{\mu}$.
\end{thm}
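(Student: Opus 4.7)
The plan is to exploit the identity $\pesc{\mu} = 1/G(\mu)$, where $G(\mu)\coloneqq \sum_{n\ge 0}\mu^{*n}(e_G)\in[1,\infty]$ is the Green function at the identity (with $1/\infty\coloneqq 0$), and reduce the continuity of $\pesc{\cdot}$ to a suitable (semi)continuity of $G(\cdot)$ under pointwise convergence of the step distribution.

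First I would observe that Scheff\'e's lemma promotes pointwise convergence $\mu_k\to\mu$ to total-variation convergence, and the elementary bound $\|\nu*\mu-\nu*\mu'\|_{TV}\le\|\mu-\mu'\|_{TV}$ gives by induction $\mu_k^{*n}\to\mu^{*n}$ in total variation for every fixed $n$; in particular $\mu_k^{*n}(e_G)\to\mu^{*n}(e_G)$. Term-wise Fatou then yields $\liminf_k G(\mu_k)\ge G(\mu)$, hence the one-sided bound $\limsup_k \pesc{\mu_k}\le \pesc{\mu}$; this already settles the recurrent case $G(\mu)=\infty$, in which $\pesc{\mu_k}\to 0=\pesc{\mu}$.

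It remains to handle the transient case $G(\mu)<\infty$, where one needs $\limsup_k G(\mu_k)\le G(\mu)$. Combined with the finite-$n$ convergence above, this reduces to the uniform tail estimate $\lim_{N\to\infty}\sup_{k\ge k_0}\sum_{n\ge N}\mu_k^{*n}(e_G)=0$ for some $k_0$ depending on $\mu$. To obtain it I would use the cubic-growth hypothesis to produce a uniform Varopoulos-type bound $\mu_k^{*n}(e_G)\le C\,n^{-3/2}$ valid for all large $k$. Concretely, pick a finite symmetric generating set $S$ of the subgroup $H\le G$ of at least cubic growth provided by the hypothesis; since $S\subset H\subset \langle\supp{\mu}\rangle_{+}$, each $s\in S$ factors as $s=g_1^{(s)}\cdots g_{m_s}^{(s)}$ with $g_j^{(s)}\in\supp{\mu}$. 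Pointwise convergence then gives $\mu_k(g_j^{(s)})\ge \mu(g_j^{(s)})/2$ for all large $k$, so that $\mu_k^{*m_s}(s)$ is bounded below uniformly in $k$. A block-decomposition argument (using that $H$ is a subgroup, so word lengths can be padded to a common $M$) then produces a splitting $\mu_k^{*M}=\delta\,\nu_0+(1-\delta)\rho_k$ with $\nu_0$ a fixed symmetric probability measure supported on $S$ and $\delta>0$ independent of $k$.

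Finally, classical Varopoulos estimates on the polynomial-growth group $H$ (degree at least $3$) give $\nu_0^{*n}(e_H)\le C n^{-3/2}$, and a coupling argument transfers this to $\mu_k^{*n}(e_G)\le C' n^{-3/2}$ uniformly in $k$ large, giving the required uniform tail estimate and hence $G(\mu_k)\to G(\mu)$. The main obstacle is precisely this last step: upgrading a Varopoulos-type return estimate for the ``seed'' symmetric walk $\nu_0$ on the subgroup $H$ into a uniform return-probability bound for the full, possibly non-symmetric walks $\mu_k$ on the ambient group $G$, via a careful block decomposition of the $\mu_k$-trajectory that exposes a definite fraction of steps following $\nu_0$ while simultaneously dominating the contribution of the remaining steps.
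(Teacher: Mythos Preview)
Your overall architecture coincides with the paper's: both reduce to continuity of the Green function $G(\cdot)=\sum_{n\ge 0}\mu^{*n}(e_G)$ via $\pesc{\mu}=1/G(\mu)$, establish the finite-$n$ convergence $\mu_k^{*n}(e_G)\to\mu^{*n}(e_G)$ directly, and then seek a uniform-in-$k$ tail bound $\mu_k^{*n}(e_G)\le Cn^{-3/2}$ by finding a common power $M$ with $\mu_k^{*M}\ge \delta\,\nu_0$ for a fixed symmetric measure $\nu_0$ supported on generators of the cubic-growth subgroup, and transferring Varopoulos' on-diagonal decay from $\nu_0$ to the $\mu_k$. (Your separate treatment of the recurrent case is harmless but unnecessary: the same transfer applied to $\mu$ itself already gives $G(\mu)<\infty$.)

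The substantive point is the step you yourself flag as the main obstacle. The paper resolves it not by coupling but via the Coulhon--Saloff-Coste comparison lemma: from the domination $\nu_0\le C_1\mu_k^{*M}$ one compares Dirichlet forms $\|f\|_2^2-\|T_{\mu_i}f\|_2^2$, converts the polynomial on-diagonal decay of $\nu_0$ into a Nash-type inequality, and runs a discrete iteration to recover the same decay for $\mu_k^{*M}$, with a constant depending only on $C_1$, the Varopoulos constant, and the growth exponent---hence uniform in $k$. A direct block-coupling argument is more delicate than it first appears in the non-abelian setting: from $\mu_k^{*M}=\delta\nu_0+(1-\delta)\rho_k$ one would like $\|\mu_k^{*Mn}\|_\infty\lesssim \E\|\nu_0^{*J}\|_\infty$ with $J\sim\mathrm{Bin}(n,\delta)$, but the $\nu_0$-steps are interspersed with arbitrary $\rho_k$-steps and there is no immediate inequality of the form $\|\alpha_0*\nu_0*\alpha_1*\nu_0*\cdots*\nu_0*\alpha_J\|_\infty\le\|\nu_0^{*J}\|_\infty$ for general probability measures $\alpha_i$ on a non-commutative $G$. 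So your plan is right, but the tool that closes the gap you identify is analytic (operator-norm and Dirichlet-form comparison) rather than a probabilistic coupling.
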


This result generalizes \cite[Proposition 2]{Erschler2011}, which does not have hypotheses on the growth of the subgroup generated by $\supp{\mu}$, but supposes that there is a fixed finite subset $F\subseteq G$ such that $\supp{\mu_k}\subseteq F$ for all $k\ge 1$, and that $\mu$ is symmetric. The proof of Theorem \ref{thm: continuity of range} is presented in Section \ref{section: continuity of return probability}, and it is based on a uniform version of a comparison lemma due to Coulhon and Saloff--Coste \cite[Proposition IV.4]{Coulhon1996} \cite[Section II]{CoulhonSaloffCoste1990} (see also Proposition \ref{prop: universal comparison constants}).

It is a result of Varopoulos \cite{Varopoulos1986} that the only finitely generated groups admitting a non-degenerate recurrent random walk are those that have at most quadratic growth (see also \cite[Theorem 3.24]{Woess2000}). That is, groups $G$ that have a finite symmetric generating set $ S$ for which there exists $ C > 0$ such that  $|(S \cup \{e_G\})^n| \le Cn^2$ for all $ n \geq 1 $. By Gromov’s Theorem on groups of polynomial growth \cite{Gromov1981} and the Bass–Guivarc’h formula \cite{Bass1972,Guivarch1973}, any such group contains a subgroup of finite index that is either trivial, $\Z$, or $\Z^2$. In groups with at most quadratic growth, one can find sequences of probability measures that define recurrent random walks, and which converge to a probability measure that defines a transient random walk \cite[Lemma 3]{Erschler2011}. Such sequences give examples of discontinuity of the function $\mu\mapsto\pesc{\mu}$. Hence, Theorem \ref{thm: continuity of range} guarantees that one can witness discontinuity of $\pesc{\cdot}$ only on groups that admit recurrent random walks.

\subsection{Connection between the continuity of asymptotic entropy and the weak continuity of harmonic measures}
The \emph{Poisson boundary} $(\partial_{\mu} G,\nu)$ of the $\mu$-random walk on $G$ is a probability space endowed with a measurable $G$-action, such that there is a $G$-equivariant isomorphism between the space of bounded $\mu$-harmonic functions on $G$ and the space of bounded measurable functions on $(\partial_{\mu} G,\nu)$ \cite[Theorem 3.1]{Furstenberg1971}. In particular, having the Liouville property is equivalent to the triviality of the Poisson boundary. We refer to Subsection \ref{subsection: entropy and Poisson boundary} for the definition of the Poisson boundary, and to \cite{KaimanovcihVershik1983,Kaimanovich2000} for a more detailed exposition.

Even though the Poisson boundary of a random walk is a measure-theoretic object, there are several classes of groups for which this space is realized as a separable and completely metrizable space $X$ endowed with a stationary probability measure (called the \emph{harmonic measure}). In the following theorem, we prove that the weak continuity of harmonic measures on $X$ implies the continuity of asymptotic entropy. 
\begin{thm}\label{thm: convergence of harmonic measures} Let $G$ be a countable group, and let $\mu$, $\{\mu_k\}_{k\ge 1}$, be non-degenerate probability measures on $G$ with finite entropy. Assume that 
	\begin{enumerate}
		\item $\lim_{k\to \infty}\mu_k(g)=\mu(g)$ for each $g\in G$, and
		\item  $\lim_{k\to \infty}H(\mu_k)=H(\mu)$.
	\end{enumerate} Suppose that there is a Polish space $X$ on which $G$ acts by continuous transformations, and Borel probability measures $\nu$, $\{\nu_k\}_{k\ge 1}$ on $X$ such that $(X,\nu)$ (resp.\ $(X,\nu_k)$) is the Poisson boundary of $(G,\mu)$ (resp.\ $(G,\mu_k)$ for each $k\ge 1$).
	
	If the sequence $\{\nu_k\}_{k\ge 1}$ converges weakly to $\nu$, then we have $\lim_{k\to \infty} h(\mu_k)=h(\mu)$. This holds in particular whenever, in addition to the above assumptions, $X$ is compact and admits a unique $\mu$-stationary probability measure.
\end{thm}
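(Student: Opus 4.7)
The plan is to reduce the problem to the continuity of the Furstenberg entropy and then establish matching semi-continuity bounds. Since $(X,\nu)$ realizes the Poisson boundary of $(G,\mu)$ and $G$ acts by measurable transformations on $X$, the Kaimanovich entropy identity gives $h(\mu) = h_F(\mu,\nu)$, where the Furstenberg entropy is
\[ h_F(\mu,\nu) = \sum_{g\in G} \mu(g)\, D_{\mathrm{KL}}(\nu \,\|\, g^{-1}\nu), \]
and the analogous identity $h(\mu_k) = h_F(\mu_k,\nu_k)$ holds for each $k$. It therefore suffices to prove $h_F(\mu_k,\nu_k) \to h_F(\mu,\nu)$, which I would split into matching upper and lower bounds.

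For the lower bound $\liminf_k h(\mu_k) \ge h(\mu)$, I would exploit the hypothesis that $G$ acts by continuous transformations: for each fixed $g \in G$, the pushforward $\eta \mapsto g^{-1}\eta$ on $\mathrm{Prob}(X)$ is weakly continuous, so $\nu_k \to \nu$ weakly implies $g^{-1}\nu_k \to g^{-1}\nu$ weakly. The joint lower semi-continuity of $D_{\mathrm{KL}}$ in the weak topology on probability measures on the Polish space $X$ then yields $D_{\mathrm{KL}}(\nu \,\|\, g^{-1}\nu) \le \liminf_k D_{\mathrm{KL}}(\nu_k \,\|\, g^{-1}\nu_k)$ for every $g \in G$. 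Combining this with $\mu_k(g) \to \mu(g)$ and applying Fatou's lemma to the sum over $G$ produces $\liminf_k h_F(\mu_k,\nu_k) \ge h_F(\mu,\nu) = h(\mu)$.

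For the upper bound $\limsup_k h(\mu_k) \le h(\mu)$, I would bypass the Furstenberg formula and use the subadditivity characterization $h(\mu) = \inf_{n\ge 1} H(\mu^{*n})/n$ together with a continuity statement for Shannon entropy of convolutions: under the assumptions $\mu_k \to \mu$ pointwise and $H(\mu_k) \to H(\mu) < \infty$, one has $H(\mu_k^{*n}) \to H(\mu^{*n})$ for every fixed $n$. This can be derived by applying Scheff\'e's lemma to the entropy densities $g \mapsto -\mu_k(g)\log\mu_k(g)$, combined with the chain-rule identity $n H(\mu_k) = H(\mu_k^{*n}) + H(X_1^{(k)}, \ldots, X_n^{(k)} \mid X_1^{(k)}\cdots X_n^{(k)})$ for iid $\mu_k$-samples, in the spirit of Erschler's approach in the finitely supported setting. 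Taking $k\to\infty$ at fixed $n$ and then $n \to \infty$ yields the bound.

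For the concluding sentence, compactness of $X$ makes $\mathrm{Prob}(X)$ weakly compact, and any subsequential weak limit $\nu'$ of $\{\nu_k\}$ satisfies $\mu*\nu' = \nu'$: this is obtained by passing to the limit in $\mu_k * \nu_k = \nu_k$ using pointwise convergence of $\mu_k$, weak convergence of $\nu_k$ along the subsequence, and dominated convergence over $G$. Uniqueness of the $\mu$-stationary measure forces $\nu' = \nu$, so a standard subsequence argument gives $\nu_k \to \nu$ weakly, and the main hypothesis applies. The principal obstacle in this scheme is the lower bound: individual summands $D_{\mathrm{KL}}(\nu \,\|\, g^{-1}\nu)$ may be infinite while their $\mu$-weighted total is finite, so the interplay between joint lower semi-continuity of $D_{\mathrm{KL}}$, Fatou's lemma over a countable group, and convergence of the weights $\mu_k(g)$ must be handled carefully to ensure that no entropy is lost in the limit.
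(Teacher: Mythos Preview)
Your proposal is correct and follows essentially the same route as the paper: upper semicontinuity of $h$ via $h(\mu)=\inf_n H(\mu^{*n})/n$ and convergence of $H(\mu_k^{*n})$, lower semicontinuity via the Kaimanovich--Vershik formula $h(\mu)=\sum_g \mu(g)\,D_{\mathrm{KL}}(\nu\,\|\,g^{-1}\nu)$ combined with joint weak lower semicontinuity of $D_{\mathrm{KL}}$ and Fatou, and the compact/unique-stationary case handled by the standard subsequence argument. The only place your sketch is imprecise is the justification of $H(\mu_k^{*n})\to H(\mu^{*n})$: the Scheff\'e/chain-rule combination you describe does not directly yield this, and the paper instead invokes the entropy-tightness argument of Amir--Angel--Vir\'ag (their Lemmas~3.2 and~3.5), which is the clean way to get continuity of Shannon entropy along convolutions.
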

The proof of this theorem is presented in Section \ref{subsection: continuity of harmonic measures}, and it is based on the combination of three results: the upper semicontinuity of asymptotic entropy \cite[Proposition 4]{AmirAngelVirag2013} (see also Proposition \ref{prop: upper semicontinuous entropy}), the expression for the asymptotic entropy as the average of Kullback-Leibler distances of harmonic measures on the Poisson boundary \cite[Theorem 3.1]{KaimanovcihVershik1983} (see also Theorem \ref{thm: entropy as furstenberg formula}), and the joint weak lower semicontinuity of the Kullback-Leibler distances of probability measures on a Polish space \cite[Theorem 1]{Posner1975} (see also Proposition \ref{prop: KL is lower semicontinuous}).

There are several families of groups to which Theorem \ref{thm: convergence of harmonic measures} applies. In particular, we obtain the following.
\begin{cor}\label{cor: applications}
	Let $G$ be a countable group, and let $\mu$, $\{\mu_k\}_{k\ge 1}$ be non-degenerate probability measures on $G$. Assume that $\lim_{k\to \infty}\mu_k(g)=\mu(g)$ for each $g\in G$, and that $\lim_{k\to \infty}H(\mu_k)=H(\mu)$. Consider any of the following situations.
	\begin{enumerate}
		\item $G$ is acylindrically hyperbolic and the probability measures $\mu$, $\{\mu_k\}_{k\ge 1}$ have finite entropy.
		\item $G$ is a Zariski dense discrete subgroup of $\mathrm{SL}_d(\R)$ and $\mu$, $\{\mu_k\}_{k\ge 1}$ have finite entropy and a finite logarithmic moment.
		\item $G$ is a subgroup of isometries of a proper $\mathrm{CAT}(0)$ space $X$ such that the action of $G$ on $X$ is proper and cocompact, $G$ has a rank one element, and the measures $\mu$, $\{\mu_k\}_{k\ge 1}$, have a finite first moment.
		\item  $G$ is a discrete subgroup of isometries of a finite dimensional $\mathrm{CAT}(0)$ cube complex $X$ such that the action of $G$ on $X$ is proper, and the measures $\mu$, $\{\mu_k\}_{k\ge 1}$ have finite entropy and a finite logarithmic moment.
	\end{enumerate}
	Then, $\lim_{k\to \infty}h(\mu_k)=h(\mu)$.
\end{cor}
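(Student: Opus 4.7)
The plan is to reduce each of the four cases to an application of Theorem \ref{thm: convergence of harmonic measures}. In every case I would exhibit a \emph{compact} Polish space $X$ equipped with a continuous $G$-action, such that $(X,\nu)$ and each $(X,\nu_k)$ realize the respective Poisson boundaries, and such that $X$ admits a unique $\mu$-stationary probability measure. Once both conditions hold, the ``in particular'' clause of Theorem \ref{thm: convergence of harmonic measures} delivers the continuity of asymptotic entropy without having to verify the weak convergence $\nu_k\to\nu$ by hand: any weak cluster point of $\{\nu_k\}$ is automatically $\mu$-stationary by passing to the limit in the stationarity equations (using $\mu_k\to\mu$ pointwise and the continuity of the $G$-action on $X$), hence must equal $\nu$.

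For case (1), I would take $X$ to be the Gromov boundary of a Gromov hyperbolic space on which $G$ acts acylindrically and non-elementarily. The identification of the Poisson boundary of $(G,\mu)$ and each $(G,\mu_k)$ with the hitting measure on $X$, under the sole assumption of finite entropy and non-degeneracy, is provided by the work of Chawla--Forghani--Frisch--Tiozzo, building on Maher--Tiozzo. The contracting dynamics of loxodromic elements and non-elementarity of the action yield uniqueness of the $\mu$-stationary probability measure on $X$.

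For case (2), I would take $X$ to be the Furstenberg boundary $G/P$ of the ambient algebraic group, a compact flag variety on which $G$ acts continuously. The finite logarithmic moment hypothesis makes the Furstenberg--Kaimanovich theory applicable and identifies the Poisson boundary with $(X,\nu)$, while Zariski density yields the proximality and strong irreducibility needed for uniqueness of the $\mu$-stationary measure (Goldsheid--Margulis, Guivarc'h--Raugi). For case (3), $X$ is the visual boundary $\partial_\infty Y$ of the $\mathrm{CAT}(0)$ space; the finite first moment hypothesis combined with Karlsson--Margulis gives positive drift, and the presence of a rank one element, combined with the results of Karlsson--Ledrappier and Le Bars, provides both the Poisson boundary identification on $\partial_\infty Y$ and uniqueness of the $\mu$-stationary measure. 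For case (4), I would take $X$ to be the regular part of the Roller boundary of the cube complex; the identification of the Poisson boundary under finite entropy and finite logarithmic moment, as well as uniqueness of the $\mu$-stationary measure on the appropriate compactification, follow from work of Fernós and Fernós--Lécureux--Mathéus.

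The substantive work is thus to invoke the correct Poisson boundary identification in each setting, which dictates both the choice of $X$ and the specific moment hypothesis appearing in the statement. I expect the most delicate step to be case (1): the hyperbolic space on which $G$ acts need not be proper, and the Poisson boundary identification for arbitrary non-degenerate measures with finite entropy is a recent result whose hypotheses must be checked uniformly in $k$ so that all of $(G,\mu)$ and $\{(G,\mu_k)\}_{k\ge 1}$ share the same compact model $X$.
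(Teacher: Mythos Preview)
Your strategy is essentially the paper's for cases (2), (3) and (4): the flag variety, the visual boundary of a proper $\mathrm{CAT}(0)$ space, and the Roller boundary are all compact, the relevant Poisson boundary identifications and uniqueness-of-stationary-measure results are exactly the ones the paper invokes, and the ``in particular'' clause of Theorem \ref{thm: convergence of harmonic measures} applies directly.

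The gap is in case (1). You correctly flag that the hyperbolic space $X$ on which $G$ acts acylindrically need not be proper, but you do not draw the consequence: the Gromov boundary $\partial X$ is then \emph{not} compact, so the ``in particular'' clause of Theorem \ref{thm: convergence of harmonic measures} does not apply, and your plan of exhibiting a compact model in every case fails here. The Poisson boundary identification of Chawla--Forghani--Frisch--Tiozzo lives on $\partial X$, not on any compactification, so you cannot simply pass to a larger compact space. The paper's fix is to prove weak convergence $\nu_k\to\nu$ on $\partial X$ directly and then use the main (non-compact) statement of Theorem \ref{thm: convergence of harmonic measures}. Concretely, the paper passes to the horofunction compactification $\overline{X}^h$, which is compact and maps continuously onto $X\cup\partial X$ via the local minimum map $\phi$; the pullbacks $\phi^{-1}_*\nu_k$ are supported on the infinite horofunctions, so by compactness they have a weak cluster point $\eta$ on $\overline{X}^h$, and pushing forward by $\phi$ gives a cluster point $\phi_*\eta$ of $\{\nu_k\}$ on $\partial X$ (this is Lemma \ref{lem: existence of weak accumulation point}). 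That cluster point is $\mu$-stationary by Lemma \ref{lem: weak limits are mu stationary}, hence equals $\nu$ by the uniqueness result of Maher--Tiozzo, and one concludes $\nu_k\to\nu$ weakly on $\partial X$. Without this extra tightness argument via the horofunction boundary, your proposal for case (1) is incomplete.
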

We refer to Subsection \ref{subsection: applications} for the definitions of the terms appearing in the statement. The case of acylindrically hyperbolic groups is not new: it has already been proved in \cite[Theorem F]{Choi2024}, and also independently by Anna Erschler and Joshua Frisch in unpublished work. In the particular case of hyperbolic groups, the above was proved in \cite[Theorem 2]{ErschlerKaimanovich2013} and \cite[Theorem 2.9]{GouezelSebastienMatheusMacourant2018}  with additional conditions on the moments of the measures. Our approach is different from the ones used in the previous references. The continuity of asymptotic entropy for the remaining cases of Corollary \ref{cor: applications} is new, even if one only considers step distributions $\mu$, $\{\mu_k\}_{k\ge 1}$ that are supported on a fixed finite generating subset of the group. Some concrete new examples of groups covered by our result are $\mathrm{SL}_d(\Z)$, $d\ge 3$, and discrete subgroups of isometries of an irreducible Cartan-Hadamard manifold $M$, acting on $M$ properly, cocompactly and with a rank one element. A more extensive and detailed recollection of the classes of groups to which Theorem \ref{thm: convergence of harmonic measures} applies is given in Subsection \ref{subsection: applications}, where each of the items in Corollary \ref{cor: applications} is proved. In particular, we show that a special case of Theorem~\ref{thm: main corollary continuity of asymptotic entropy over Zd}, under additional assumptions on the measures $\mu$ and $\mu_k$, $k \ge 1$, can be deduced from Theorem~\ref{thm: convergence of harmonic measures}. However, this argument does not recover the result in full generality; see Corollary~\ref{cor: wr} and the paragraph following it.

As a final remark, we emphasize that the results presented in this paper establish the continuity of asymptotic entropy in all cases known so far. There are many classes of groups not covered by our results, and for which it is not known whether asymptotic entropy is continuous. In particular, we ask the following.

\begin{question}
	For $m\ge 2$, denote by $F_m$ the free group of rank $m$, and let $n\ge 2$ be sufficiently large so that the free Burnside group $B(m,n)\coloneqq F_m/\langle g^n, g\in F_m\rangle$ is infinite. Consider non-degenerate probability measures $\mu$, $\{\mu_k\}_{k\ge 1}$ on $B(m,n)$ such that  $\lim_{k\to \infty}\mu_k(g)=\mu(g)$ for each $g\in B(m,n)$, and  $\lim_{k\to \infty}H(\mu_k)=H(\mu)$. Is it true that $\lim_{k\to \infty} h(\mu_k)=h(\mu)$?
\end{question}
The answer to this question would be interesting even in the particular case in which the supports of $\mu$ and $\{\mu_k\}_{k\ge 1}$ are contained in a fixed finite symmetric generating set of $B(m,n)$, and where $n$ is chosen to be sufficiently large, for example, to guarantee that $B(m,n)$ is non-amenable.

\subsection{Sketch of the proof of Theorem \ref{thm: main corollary continuity of asymptotic entropy over Zd}}\label{subsection: sketch of proof continuity of entropy wreath products}
We prove Theorem \ref{thm: main corollary continuity of asymptotic entropy over Zd} as a consequence of Theorem \ref{thm: continuity asymptotic entropy wreath prods}, which shows the continuity of asymptotic entropy on wreath products in a more general setting and with the minimal hypotheses that allow our proof to work. The proof builds on ideas presented in \cite{FrischSilva2024}, by estimating the entropy of a random walk on a wreath product via the uncertainty of the values of the lamp configuration at a given instant. The basic idea of the argument is the following.

Let $\mu$ be a non-degenerate probability measure on $A\wr B$, where $B$ is a countable hyper-FC-central group that contains a finitely generated subgroup of at least cubic growth. Suppose that $H(\mu)<\infty$. Consider a sequence of probability measures $\{\mu_k\}_{k\ge 1}$ on $A\wr B$ with finite entropy such that $\lim_{k\to \infty} H(\mu_k)=H(\mu)$ and $\lim_{k\to \infty} \mu_k(g)=\mu(g)$ for every $g\in A\wr B$. Recall that we denote by $\pi:A\wr B\to B$ the canonical projection to the base group $B$.
\begin{enumerate}
	\item \label{sketch: step 1} Thanks to the subadditivity of the sequence $\{H(\mu^{*n})\}_{n\ge 1}$, we get that the asymptotic entropy is an upper-semicontinuous function \cite[Proposition 4]{AmirAngelVirag2013} (see also Proposition \ref{prop: upper semicontinuous entropy}). From this, we have $\limsup_{k\to \infty} h(\mu_k)\le h(\mu)$. Hence, the continuity of asymptotic entropy will follow from proving that $\liminf_{k\to \infty} h(\mu_k)\ge h(\mu).$
\end{enumerate}	
Denote by $\{w_n\}_{n\ge 0}$ a trajectory of the random walk on $A\wr B$ and, for each $k\ge 1$ and $n\ge 0$, denote by $H_{\mu_k}(w_n)=H(\mu_k^{*n})$ the entropy of the $n$-th step of the $\mu_k$-random walk on $A\wr B$.
\begin{enumerate}\setcounter{enumi}{1}
	\item \label{sketch: step 2} The main technical step of the proof is showing that for each $\varepsilon>0$, there is $C>0$ such that for every sufficiently large values of $k,N$ and $n$, we have \begin{equation}\label{eq:maingoal} \frac{1}{n}H_{\mu_k}(w_n)\ge \left(\frac{1}{N}-\frac{1}{n}\right)H_{\mu_k}(w_N)-\varepsilon -\frac{C}{N}-\frac{C}{n};\end{equation}
	see Proposition \ref{prop: final entropy estimates for continuity of entropy on wreath products}. The inequality $\liminf_{k\to \infty}h(\mu_k)\ge h(\mu)$ will therefore follow from taking the limits as $n\to \infty$, $k\to \infty$, $N\to \infty$ and $\varepsilon\to 0$ in Inequality \eqref{eq:maingoal}.
	
	\item \label{sketch: step 3} The proof of Inequality \eqref{eq:maingoal} is given in Section \ref{section: proof of the main theorem}, by using entropy estimates from Sections \ref{section: entropy estimates} and \ref{section: entropy lamps inside}.
\end{enumerate} 
The intuition behind the entropy estimates that we will use is the following.

\begin{enumerate}\setcounter{enumi}{3}
	\item We justify that for any sufficiently large $n>N$, the value of $w_n$ allows us to recover the values of the \textit{$N$-coarse trajectory} $\mathcal{P}_{n}^{N}(A\wr B)\coloneqq (w_{N}, w_{2N},\ldots, w_{\lfloor n/N\rfloor N})$ on $A\wr B$ (see Definition \ref{def: coarse trajectory}), up to adding a controlled amount of entropy that is of order $n(\varepsilon+ 1/N )$. Since the entropy of $\mathcal{P}_{n}^{N}(A\wr B)$ equals $\left\lfloor \frac{n}{N}\right\rfloor H(w_{N})$ (Lemma \ref{lem: slices of independent increments have additive entropy}), this will show that Inequality \eqref{eq:maingoal} holds.
	
	\item In order to recover the value of $\mathcal{P}_{n}^{N}(A\wr B)$ from $w_n$, we need to determine both the projection to the base group $B$ and the lamp configuration every $N$ steps.
	
	\item The hypothesis that $B$ is hyper-FC-central guarantees that $h(\pi_{*}\mu)=0$. This implies that the values in the base group visited by the random walk every $N$ steps up to time $n$ carry a small amount of entropy (see Lemma \ref{lem: coarse trajectory has small entropy}).
	\item For the values of the lamp configuration every $N$ steps, we divide the base group $B$ into two regions: the positions that are close to the trajectory of the projection of the random walk on $B$ (which corresponds to the set $\mathcal{N}_n(t_0,R)$ from Definition \ref{def:neighborhood of coarse traj}), and the ones that are far away from it (which corresponds to $B\backslash \mathcal{N}_n(t_0,R)$).
	\item By controlling the parameters that quantify what we mean by being ``far'' from the trajectory, we can show that the lamp configuration at positions in $B\backslash \mathcal{N}_n(t_0,R)$, sampled every $N$ steps, contains a small amount of entropy. A precise statement of this can be found in Lemmas \ref{lem: bad increments have small entropy} and \ref{lem: entropy lamps outside}.
	\item Finally, the values of the lamp configuration at positions inside $\mathcal{N}_n(t_0,R)$ (i.e., positions near the trajectory in the base group) sampled every $N$ steps can be deduced from $w_n$ with the cost of adding an amount of entropy of order $n(\varepsilon+1/N)$ (see Proposition \ref{prop: entropy lamps inside}).
	
	\item The idea behind the above is that the transience of the random walk on $B$ implies that we can find a ``waiting time'' $n_0\ge 1$ with the following property: if $\pi(w_j)=b\in B$, then it is very unlikely that $\pi(w_{j+k})=b$ for some $k>n_0$, uniformly on $b$ and $j$. For the value $n_0$ to be uniform also among the sequence $\{\mu_k\}_{k\ge 1}$, it is important that we have the convergence $\lim_{k\to \infty}\pesc{\mu_k}=\pesc{\mu}$. The latter is used, in particular, to prove Proposition \ref{prop: uniform decay for return to a finite subset}, which is applied multiple times throughout Section \ref{section: entropy lamps inside}. With this, we can show that lamp modifications that occur on positions that were visited more than $n_0$ time units in the past can be deduced from the current lamp configuration at time $n$, at the cost of adding a small amount of entropy. This is detailed in Section \ref{section: entropy lamps inside}.
\end{enumerate}

\subsection{Organization}
In Section \ref{section: preliminaries}, we recall preliminary definitions related to asymptotic entropy and the return probability of random walks on groups. Section \ref{section: convergence of probability measures} establishes basic well-known results about the convergence of probability measures on countable groups, including Proposition \ref{prop: uniform decay for return to a finite subset}, which is frequently used throughout the paper. Next, in Section \ref{section: continuity of return probability}, we present Proposition \ref{prop: universal comparison constants}, a version of Coulhon and Saloff-Coste's comparison lemma, and use it to prove Theorem \ref{thm: continuity of range} on the continuity of the escape probability. Sections \ref{section: entropy estimates} and \ref{section: entropy lamps inside} consist of entropy estimates for wreath products, which are then applied in Section \ref{section: proof of the main theorem} to prove Theorem \ref{thm: continuity asymptotic entropy wreath prods}, our most general result on the continuity of asymptotic entropy for wreath products, and to deduce Theorem \ref{thm: main corollary continuity of asymptotic entropy over Zd}. Finally, in Section \ref{section: convergence of harmonic measures}, we prove Theorem \ref{thm: convergence of harmonic measures} and Corollary \ref{cor: applications}, addressing the continuity of asymptotic entropy as a consequence of the weak continuity of harmonic measures on the Poisson boundary.
\subsection{Acknowledgements}
I would like to thank Anna Erschler for suggesting this project, for indicating the reference \cite{Coulhon1996} to me, and for many helpful discussions as well as comments on the first draft of this paper. I am grateful to Inhyeok Choi for indicating the reference \cite{AmirAngelVirag2013}  for the proof of Lemma \ref{lem: convolutions entropy convergence}, as well as for helpful comments and references for the applications of Theorem \ref{thm: convergence of harmonic measures} to acylindrically hyperbolic groups. I also thank Petr Kosenko for many thought-provoking discussions around the continuity of harmonic measures on the Poisson boundary. I am very grateful to Anna Cascioli for her careful reading of the first version of this manuscript. I would like to thank the anonymous referee for their careful reading of this paper, as well as for suggesting the addition of Proposition \ref{prop: wr app} and Corollary \ref{cor: wr}. The author is funded by the Deutsche Forschungsgemeinschaft (DFG, German Research Foundation) under Germany's Excellence Strategy EXC 2044 –390685587, Mathematics Münster: Dynamics–Geometry–Structure.
\section{Preliminaries}\label{section: preliminaries}
\subsection{Random walks on groups, entropy and the Poisson boundary}\label{subsection: entropy and Poisson boundary}
Let $\mu$ be a probability measure on a countable group $G$. The \emph{$\mu$-random walk} on $G$ is the Markov chain $\{w_n\}_{n\ge 0}$ defined by $w_0=e_G$, and for $n\ge 1$,
$$
w_n=g_1g_2\cdots g_n,
$$
where $\{g_i\}_{i\ge 1}$ is a sequence of independent, identically distributed, random variables on $G$ with law $\mu$. The law $\P_{\mu}$ of the process $\{w_n\}_{n\ge 0}$ is defined as the push-forward of the Bernoulli measure $\mu^{\Z_{+}}$ through the map
\begin{equation*}
	\begin{aligned}
		G^{\mathbb{Z}_{\ge 1}}&\to G^{\Z_{+}}\\
		(g_1,g_2,g_3,\ldots)&\mapsto (w_0,w_1,w_2,w_3,\ldots)\coloneqq (e_G,g_1,g_1g_2,g_1g_2g_3,\ldots).
	\end{aligned}
\end{equation*}
The space $(G^{\Z_{+}},\P_{\mu})$ is called the \emph{space of sample paths} or the \emph{space of trajectories} of the $\mu$-random walk. In most of the paper we will be working at the same time with a probability measure $\mu$ as well as with a sequence of probability measures $\{\mu_k\}_{k\ge 1}$. In this context we will need to distinguish between $\P_{\mu}$ and $\P_{\mu_{k}}$, $k\ge 1$.

\begin{defn}\label{defn: Poisson boundary original def} Let $G$ be a countable group, and let $\mu$ be a probability measure on $G$. Two sample paths $\mathbf{w}=(w_1,w_2,\ldots)$, $\mathbf{w^\prime}=(w^\prime_1,w^\prime_2,\ldots)\in G^{\Z_{+}}$ are said to be equivalent if there exist $p,N\ge 0$ such that $w_n=w^\prime_{n+p}$ for all $n>N$. Consider the measurable hull associated with this equivalence relation. That is, the $\sigma$-algebra formed by all measurable subsets of the space of trajectories $(G^{\Z_{+}},\P)$ which are unions of the equivalence classes of $\sim$ up to $\P$-null sets. The associated quotient space is called the \emph{Poisson boundary} of the random walk $(G,\mu)$.
\end{defn}

The Poisson boundary can also be defined as the space of ergodic components of the shift map $T:G^{\Z_{+}}\to G^{\Z_{+}}$ on the space of trajectories, where $T(w_1,w_2,w_3,\ldots)\coloneqq (w_2,w_3,\ldots)$ for $(w_1,w_2,\ldots)\in G^{\Z_{+}}$. For further equivalent definitions the Poisson boundary, we refer to \cite{KaimanovcihVershik1983}, \cite[Section 1]{Kaimanovich2000} and the references therein. We also refer to the surveys \cite{Furman2002,Erschler2010} for an overview of the study of Poisson boundaries and to the survey \cite{Zheng2022} for more recent applications of random walks and Poisson boundaries to group theory.

In this paper we will work with countable partitions of the space of sample paths. One of the most important partitions will be the following.
\begin{defn}\label{def: partition rw at time n}
	For every $n\ge 1$, define the partition $\alpha_n$ of the space of sample paths $G^{\mathbb{Z}}$, where two trajectories $\mathbf{w},\mathbf{w^\prime}\in G^{\Z_{+}}$ belong to the same element of $\alpha_n$ if and only if $w_n=w^\prime_n$.
\end{defn} 
In other words, $\alpha_n$ is the partition given by the $n$-th instant of the $\mu$-random walk.

The \emph{Shannon entropy} of a countable partition $\rho$ of the space of sample paths $G^{\Z_{+}}$ with respect to the probability measure $\P$ is defined as
\begin{equation*}
	H(\rho)\coloneqq -\sum_{k\ge 1}\P(\rho_k)\log \P(\rho_k).
\end{equation*} 

Note that for each $n\ge 1$ we have
\begin{equation*}
	H(\alpha_n)= -\sum_{g\in G}\P(w_n=g)\log \P(w_n=g)=H(\mu^{*n}),
\end{equation*} 
so that the entropy of the partition $\alpha_n$ coincides with the entropy of the convolution $\mu^{*n}$, which is the law of the $n$-th step of the $\mu$-random walk on $G$. From this, the asymptotic entropy $h_{\mu}$ of the random walk $(G,\mu)$ (Definition \ref{defn: asymptotic entropy}) can be expressed in terms of the partitions $\{\alpha_n\}_{n\ge 1}$ as $h(\mu)=\lim_{n\to \infty} \frac{H(\alpha_n)}{n}$.

\begin{thm}[{\cite{Derrienic1980}, \cite{KaimanovcihVershik1983}}] Let $G$ be a countable group and consider a probability measure $\mu$ on $G$. Suppose that $H(\mu)<\infty$. Then the Poisson boundary of $(G,\mu)$ is trivial if and only if $h(\mu)=0$. 
\end{thm}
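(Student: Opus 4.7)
The plan is to establish the Kaimanovich-Vershik entropy identity
\[
    h(\mu) \;=\; H(\mu) - H(X_1 \mid \mathcal{T}),
\]
where $X_1 \sim \mu$ is the first increment of the walk and $\mathcal{T} \coloneqq \bigcap_{n\ge 1} \sigma(w_n, w_{n+1}, \ldots)$ is the tail $\sigma$-algebra on the space of sample paths $(G^{\Z_{+}}, \P_\mu)$. Once this identity and the standard identification of the quotient by $\mathcal{T}$ with the Poisson boundary are in hand, the forward direction is immediate: if the Poisson boundary is trivial, then $\mathcal{T}$ is trivial modulo $\P_\mu$-null sets, so $H(X_1 \mid \mathcal{T}) = H(X_1) = H(\mu)$ and hence $h(\mu)=0$. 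The reverse direction reduces to showing that $h(\mu)=0$, equivalently $X_1 \perp \mathcal{T}$, forces $\mathcal{T}$ itself to be $\P_\mu$-trivial.

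To prove the identity, set $\mathcal{F}_n \coloneqq \sigma(w_n, w_{n+1}, \ldots)$, a decreasing family of $\sigma$-algebras with $\bigcap_n \mathcal{F}_n = \mathcal{T}$. The key observation is that, given $w_n$, the future increments $X_{n+1}, X_{n+2},\ldots$ are independent of $X_1$, so conditioning on $\mathcal{F}_n = \sigma(w_n, X_{n+1}, X_{n+2}, \ldots)$ yields the same information about $X_1$ as conditioning on $w_n$ alone: $H(X_1 \mid \mathcal{F}_n) = H(X_1 \mid w_n)$. Applying the chain rule to the pair $(X_1, w_n)$ in two different ways, using that $w_n = X_1 \cdot (X_2 \cdots X_n)$ with i.i.d.\ increments, gives
\[
    h_n \;\coloneqq\; H(\mu^{*n}) - H(\mu^{*(n-1)}) \;=\; H(\mu) - H(X_1 \mid w_n) \;=\; H(\mu) - H(X_1 \mid \mathcal{F}_n).
\]
Since $\mathcal{F}_{n+1}\subseteq \mathcal{F}_n$, the conditional entropy $H(X_1\mid \mathcal{F}_n)$ is non-decreasing in $n$; reverse martingale convergence, valid under $H(\mu)<\infty$, then yields $H(X_1 \mid \mathcal{F}_n) \uparrow H(X_1 \mid \mathcal{T})$. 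Consequently $h_n$ is non-increasing with limit $H(\mu) - H(X_1\mid \mathcal{T})$; since $H(\mu^{*n}) = \sum_{k=1}^n h_k$, the Cesàro averages $H(\mu^{*n})/n$ share the same limit, which equals $h(\mu)$ by definition.

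The identification of $\mathcal{T}$ with the Poisson boundary is standard: any trajectory that is eventually shift-equivalent to another shares the same tail behavior, and the resulting bijection between bounded $\mathcal{T}$-measurable functions on the path space and bounded $\mu$-harmonic functions on $G$, given by the Poisson transform, sends constants to constants. Hence $\mathcal{T}$ is $\P_\mu$-trivial if and only if every bounded $\mu$-harmonic function is constant, if and only if the Poisson boundary is trivial. This settles the direction ``Poisson boundary trivial implies $h(\mu)=0$''.

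The main obstacle is the converse: from $X_1\perp \mathcal{T}$, deduce that $\mathcal{T}$ is $\P_\mu$-trivial. The naive strategy of combining the marginal independences $X_k\perp \mathcal{T}$ into joint independence of the tuple $(X_1,X_2,\ldots)$ from $\mathcal{T}$ fails in general, since marginal independence does not imply joint independence. I would overcome this by applying the entropy identity to each shifted walk $(w_k^{-1}w_{k+n})_{n\ge 0}$, which is again a $\mu$-random walk whose tail is naturally identified with $\mathcal{T}$, and exploiting the Markov property of $(w_n)_{n\ge 0}$ to upgrade the marginal condition $X_1\perp\mathcal{T}$ to independence of $\mathcal{T}$ from the entire increment process $\sigma(X_1,X_2,\ldots)$. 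Since $\mathcal{T}\subseteq \sigma(X_1,X_2,\ldots)$, the tail is independent of itself and therefore $\P_\mu$-trivial, which by the identification makes the Poisson boundary trivial.
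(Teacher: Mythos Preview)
The paper does not prove this theorem; it is stated as a background result and attributed to Derriennic and Kaimanovich--Vershik, so there is no proof in the paper to compare against.

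Your outline is the standard Kaimanovich--Vershik argument and is correct. Two small points are worth tightening. For the forward direction you write ``if the Poisson boundary is trivial, then $\mathcal{T}$ is trivial modulo $\P_\mu$-null sets''. With the paper's Definition~\ref{defn: Poisson boundary original def}, the Poisson boundary corresponds to the \emph{invariant} $\sigma$-algebra, not the tail; that the two coincide mod null for random walks on groups is itself a theorem (Derriennic's 0--2 law), not a tautology. You can sidestep this by observing only the easy inclusion: the invariant $\sigma$-algebra $\mathcal{I}$ lies inside $\mathcal{T}$ mod null (apply reverse martingale convergence to $T^{-n}A=A$), hence $H(X_1\mid\mathcal{T})\le H(X_1\mid\mathcal{I})=H(\mu)$ when $\mathcal{I}$ is trivial, giving $h(\mu)\le 0$. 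For the converse, the induction you sketch works once one makes explicit that any $A\in\mathcal{T}$, restricted to an atom $\{X_1=g_1,\ldots,X_k=g_k\}$, lies in the tail of the shifted walk $(X_{k+1}\cdots X_{k+n})_{n\ge 0}$; this is immediate since $w_n=g_1\cdots g_k\cdot X_{k+1}\cdots X_n$ on that atom, so the conditional application of the entropy identity is legitimate and the upgrade to $(X_1,\ldots,X_k)\perp\mathcal{T}$ for all $k$ follows.
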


\begin{rem}\label{rem: entropy as long scale Hausdorff dimension}
	The asymptotic entropy $h(\mu)$ of the $\mu$-random walk on $G$ can be interpreted as a ``large-scale Hausdorff dimension'' of the space of sample paths. More precisely, for every $\varepsilon>0$, the following hold:
	\begin{enumerate}
		\item there exists a sequence of finite subsets $A_n\subseteq G$ with $|A_n|\le \exp\left( (h(G,\mu)+\varepsilon)n\right)$ such that $\P$-almost surely $w_n\in A_n$ for $n$ sufficiently large, and
		\item for any sequence of finite subsets $B_n\subseteq G$ with $|B_n|\le \exp\left((h(G,\mu)-\varepsilon)n\right)$, it holds that $\P$-almost surely $w_n\notin B_n$ for $n$ sufficiently large;
	\end{enumerate}
	See e.g.\ \cite[Remark 2.29]{Furman2002}.
\end{rem}

Recall that a countable group $ G $ is called ICC if every non-trivial element in the associated quotient group has an infinite conjugacy class. The hyper-FC-center of a group $G$ is the minimal normal subgroup of $G$ with the property that the associated quotient group is ICC. A group $G$ is said to be hyper-FC-central if it coincides with its hyper-FC-center.  A finitely generated group is hyper-FC central if and only if it contains a nilpotent subgroup of finite index \cite[Theorem 2]{McLain1956}. It is known that a countable group $G$ is hyper-FC-central if and only if every probability measure on $G$ has the Liouville property. The fact that hyper-FC-central groups have this property goes back to \cite{Blackwell1955,ChoquetDeny1960,DoobSnellWilliamson1960} for abelian groups, to \cite{DynkinMaljutov1961,Margulis1966} for nilpotent groups, and to \cite{LinZaidenberg1998,Jaworski2004} for the general case. Reciprocally, it is proved in \cite[Theorem 1]{FrischHartmanTamuzVahidi2019} that any countable group that is not hyper-FC-central admits a non-degenerate probability measure with finite entropy and without the Liouville property. 

Throughout this paper, we will often use some well-known properties of entropy, which we list in the following lemma.
\begin{lem}\label{lem: basic properties entropy} Consider countable partitions $\rho,\gamma$ and $\delta$ of a Borel space. The following properties hold:
	\begin{enumerate}
		\item \label{item: entropy 1} $	H(\rho\vee \gamma \mid \delta)= H(\rho\mid \gamma\vee \delta)+H(\gamma\mid\delta).$
		\item \label{item: entropy 2} $H(\rho\mid \gamma)\le H(\rho\vee \delta\mid \gamma).$
		\item \label{item: entropy 3} $H(\rho\mid \gamma \vee \delta)\le H(\rho\mid \gamma).$
	\end{enumerate}
\end{lem}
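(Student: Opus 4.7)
The plan is to derive all three identities directly from the definition of conditional Shannon entropy for countable partitions,
\[
H(\rho \mid \gamma) = -\sum_{R \in \rho,\, G \in \gamma} \P(R \cap G) \log \frac{\P(R \cap G)}{\P(G)},
\]
with the convention $0 \log 0 = 0$ and the implicit restriction to blocks $G$ with $\P(G) > 0$. Since all summands are nonnegative, the sums are well defined in $[0, +\infty]$ and every rearrangement below is justified term by term.

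For \eqref{item: entropy 1} I would expand $H(\rho \vee \gamma \mid \delta)$ as a sum over triples $(R, G, D) \in \rho \times \gamma \times \delta$ and use the factorization
\[
\log \frac{\P(R \cap G \cap D)}{\P(D)} = \log \frac{\P(R \cap G \cap D)}{\P(G \cap D)} + \log \frac{\P(G \cap D)}{\P(D)}.
\]
The first piece, summed against $\P(R \cap G \cap D)$, is exactly $H(\rho \mid \gamma \vee \delta)$, and marginalizing the second piece over $R \in \rho$ gives $H(\gamma \mid \delta)$; this establishes the chain rule. Item \eqref{item: entropy 2} is then an immediate consequence: applying \eqref{item: entropy 1} with the roles of $\delta$ and $\rho$ interchanged yields $H(\rho \vee \delta \mid \gamma) = H(\delta \mid \rho \vee \gamma) + H(\rho \mid \gamma)$, and the first term on the right is nonnegative.

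The only genuinely non-trivial item is \eqref{item: entropy 3}. Here the plan is to write
\[
H(\rho \mid \gamma) - H(\rho \mid \gamma \vee \delta) = \sum_{G \in \gamma,\, D \in \delta} \P(G \cap D) \sum_{R \in \rho} \frac{\P(R \cap G \cap D)}{\P(G \cap D)} \log \frac{\P(R \cap G \cap D)/\P(G \cap D)}{\P(R \cap G)/\P(G)},
\]
and to recognize each inner sum as the Kullback--Leibler divergence between the conditional distributions of $\rho$ given $G \cap D$ and given $G$. By Gibbs' inequality each such divergence is nonnegative, so the entire expression is nonnegative; equivalently, this is the standard fact that the conditional mutual information of $\rho$ and $\delta$ given $\gamma$ is nonnegative. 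The mild technical point worth noting is that the partitions are countable rather than finite and the entropies may be infinite, but the nonnegativity of all summands allows one either to work directly in $[0, +\infty]$ or to reduce to finite sub-partitions by monotone convergence, so no real obstacle arises.
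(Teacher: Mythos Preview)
Your proof is correct. The paper does not give its own proof of this lemma at all: it simply cites \cite[Corollaries 2.5 and 2.6]{MartinEngland1981} and \cite[Section 5]{Rohlin1967} for the proofs and refers the reader to \cite[Chapter 2]{MartinEngland1981} for background, so your direct derivation from the definition of conditional entropy is strictly more self-contained than what the paper offers. The chain-rule argument for \eqref{item: entropy 1}, the immediate deduction of \eqref{item: entropy 2} from it, and the identification of $H(\rho\mid\gamma)-H(\rho\mid\gamma\vee\delta)$ as an average of Kullback--Leibler divergences for \eqref{item: entropy 3} are exactly the standard arguments one finds in those references. The only caveat is the one you already flag: when both entropies in \eqref{item: entropy 3} are infinite the subtraction is formal, but the inequality is then trivial, and when $H(\rho\mid\gamma)<\infty$ your monotone-convergence reduction to finite sub-partitions handles the remaining case cleanly.
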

We refer to \cite[Corollaries 2.5 and 2.6]{MartinEngland1981} and \cite[Section 5]{Rohlin1967} for the proofs of these properties, and to \cite[Chapter 2]{MartinEngland1981} for a more detailed exposition to entropy of probability measures.
\begin{rem}\label{rem: partitions defined by random variables}
	In order to simplify notation, throughout this paper we will use the same symbol to denote both a random variable and the partition of the space of sample paths that it defines. More precisely, let $X:(G^{\Z_{+}},\P)\to D$ be a random variable from the space of sample paths with values on a countable set $D$. The countable partition $\rho_X$ of $(G^{\mathbb{Z}_{+}},\P)$ defined by $X$ is given by saying that two sample paths  $\mathbf{w},\mathbf{w^\prime}\in G^{\mathbb{N}}$ belong to the same element of $\rho_X$ if and only if $X(\mathbf{w})=X(\mathbf{w^{\prime}})$. Then, we will denote $H(X)\coloneqq H(\rho_X)$.
\end{rem}

\subsection{Escape probability and the asymptotic range}\label{subsection: escape prob and the asymptotic range}
For a probability measure $\mu$ on a countable group $G$, denote by $R_n\coloneqq |\{w_0,w_1,\ldots, w_n\}|$ the number of distinct elements of $G$ visited by the $\mu$-random walk on $G$ up to time $n$. It is well known that the sequence $\{R_n/n\}_{n\ge 1}$ converges almost surely and in $L^1(G^{\Z_{+}})$ to a constant, called the \emph{asymptotic range} of the $\mu$-random walk. In fact we have
\begin{equation}\label{equation: asymptotic range}
	\lim_{n\to \infty}\frac{R_n}{n}=\pesc{\mu}= \P_{\mu}(w_n\neq e_G\text{ for all }n\ge 1).
\end{equation}
Indeed, the existence of the limit and the fact that it is a constant are consequences of Kingman's subadditive ergodic theorem \cite[Theorem 5]{Kingman1968}. The proof of the fact that its value coincides with the escape probability of the $\mu$-random walk can be found in \cite[Theorem I.4.1]{Spitzer1976} for $\Z^d$, $d\ge 1$, and in \cite[Lemma 1]{Dyubina1999} for the general case of a countable group. 

Furthermore, we will use the following well-known equality that relates the escape probability of the $\mu$-random walk with the expected number of returns to the origin; see e.g. \cite[Lemma 1.13 (a)]{Woess2000}.
\begin{lem}\label{lem: expected number of visits is 1 over 1-return prob}
	Let $\mu$ be a probability measure on a countable group $G$. Then
	\[	\sum_{n= 0}^{\infty}\mu^{*n}(e_G)=\frac{1}{\pesc{\mu}}.\]
\end{lem}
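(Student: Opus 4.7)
The plan is to compute the expected number of visits of the random walk to the identity in two different ways, and then equate the two expressions. Set
\[
V \coloneqq \sum_{n\ge 0}\mathbf{1}_{\{w_n=e_G\}},
\]
the total number of times the $\mu$-random walk visits $e_G$ (possibly $+\infty$). First, by Tonelli/linearity of expectation,
\[
\E[V] \;=\; \sum_{n\ge 0}\P(w_n=e_G) \;=\; \sum_{n\ge 0}\mu^{*n}(e_G),
\]
which is exactly the left-hand side of the identity.

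Next I would compute $\E[V]$ via the strong Markov property. For $k\ge 0$, let $\tau_k$ denote the time of the $k$-th return to $e_G$ after time $0$, with $\tau_0 \coloneqq 0$ and $\tau_{k+1} \coloneqq \inf\{n>\tau_k : w_n=e_G\}$, adopting the convention $\inf\emptyset = \infty$. The event $\{V\ge k+1\}$ is precisely $\{\tau_k<\infty\}$. Note that $\P(\tau_1<\infty) = 1-\pesc{\mu}$. Since $\{w_n\}_{n\ge 0}$ is a time-homogeneous Markov chain on the countable set $G$, the strong Markov property applied at the finite stopping time $\tau_k$ (conditional on $\tau_k<\infty$) gives that the shifted process $\{w_{\tau_k+n}\}_{n\ge 0}$ has the same law as $\{w_n\}_{n\ge 0}$ and is independent of $\mathcal{F}_{\tau_k}$. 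Hence
\[
\P(\tau_{k+1}<\infty \mid \tau_k<\infty) \;=\; \P(\tau_1<\infty) \;=\; 1-\pesc{\mu},
\]
so by induction $\P(V\ge k+1)=\P(\tau_k<\infty)=(1-\pesc{\mu})^k$ for every $k\ge 0$.

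Summing the tail probabilities,
\[
\E[V] \;=\; \sum_{k\ge 0}\P(V\ge k+1) \;=\; \sum_{k\ge 0}(1-\pesc{\mu})^k \;=\; \frac{1}{\pesc{\mu}}
\]
in the transient case $\pesc{\mu}>0$. Combining the two computations of $\E[V]$ proves the identity. In the recurrent case $\pesc{\mu}=0$, the geometric sum diverges and $V=\infty$ almost surely, so both sides equal $+\infty$ and the identity still holds in $[0,+\infty]$.

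The only delicate point is the application of the strong Markov property to the discrete-time Markov chain on the countable state space $G$, but this is classical: one conditions on the value of $\tau_k$ and uses the Markov property at the deterministic time $\tau_k=m$, then sums over $m$. No moment or non-degeneracy assumption on $\mu$ is required, and the argument is a textbook computation which is why the statement is invoked as a well-known lemma rather than proved in full detail.
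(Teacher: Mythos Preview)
Your proof is correct and is precisely the standard argument; the paper itself does not give a proof but simply cites \cite[Lemma 1.13 (a)]{Woess2000}, where essentially this same computation appears. There is nothing to add.
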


\subsection{Wreath products}\label{subsection: wreath products}
Given two groups $A$ and $B$, their \textit{wreath product} $A\wr B$ is defined as the semidirect product $\bigoplus_{B} A \rtimes B$, where $\bigoplus_{B} A $ is the group of finitely supported functions $f:B\to A$, endowed with the operation $\oplus$ of componentwise multiplication. Here, the group $B$ acts on the direct sum $\bigoplus_{B} A $ from the left by translations. In other words, for $f:B\to A$, and any $b\in B$ we have
$$
(b\cdot f)(x)=f(b^{-1}x), \ x\in B.
$$

Each element of $A\wr B$ is expressed as a pair $(f,b)$, where $f\in \bigoplus_{B} A$ and $b\in B$. The multiplication of two elements $(f,b)$, $(f^{\prime},b^{\prime})\in A\wr B$ is given by
$$
(f,b)\cdot(f^{\prime},b^{\prime})= (f\oplus (b\cdot f^{\prime}),bb^{\prime}).
$$
There is a natural embedding of $B$ into $A\wr B$ via the mapping
\begin{align*}
	B&\to A\wr B\\
	b&\mapsto (\mathds{1},b),
\end{align*}
where $\mathds{1}(x)=e_A$ for any $x\in B$. Similarly, we can embed $A$ into $A\wr B$ via the mapping
\begin{align*}
	B&\to A\wr B\\
	a&\mapsto (\delta^{a}_{e_B},e_B),
\end{align*}
where $\delta^{a}_{e_B}(e_B)=a$ and $\delta^{a}_{e_B}(x)=e_A$ for any $x\neq e_B$. In particular, whenever $A$ and $B$ are finitely generated groups, for any choice of finite generating sets $S_A$ and $S_B$ of $A$ and $B$, respectively, their copies inside $A\wr B$ through the above embeddings will generate $A\wr B$.

It is usual to call wreath products of the form $\Z/2\Z\wr B$ by the name ``lamplighter groups'' due to the following interpretation:
the Cayley graph $\cay{B}{S_B}$ is imagined as a street with lamps at every element, where each lamp can be switched between on and off states independently from the others. The identity of $\Z/2\Z$ corresponds to the lamp being turned off, and the non-trivial element of $\Z/2\Z$ corresponds to the lamp being turned on. Given this, an element $g=(f,b)\in \Z/2\Z\wr B$ consists of a \emph{lamp configuration} $f\in \bigoplus_{B} \Z/2\Z$, which encodes the state of the lamp at each element of $B$, together with a position $b\in B$, which corresponds to a person standing next to a particular lamp. Multiplying on the right by elements of $\Z/2\Z$ corresponds to switching the state of the lamp at position $b$, whereas multiplying on the right by an element of $B$ changes the position of the person, without modifying the lamp configuration.

For a probability measure $\mu$ on $A\wr B$, let us denote by $w_n=(\varphi_n,X_n)$, $n\ge 0$, a sample path of a random walk on the wreath product $A\wr B$. It is proved in \cite[Theorem 3.1]{Erschler2004Liouville} that if $\mu$ is non-degenerate and the induced random walk $\{X_n\}_{n\ge 0}$ on the base group $B$ is transient, then the Poisson boundary of $(A\wr B,\mu)$ is non-trivial. This result uses the entropy criterion and does not provide an explicit description of the Poisson boundary. For a large class of step distributions $\mu$, the following stabilization phenomenon will occur: $\P$-almost surely, for each $b\in B$, there is $N\ge 1$ such that for every $n\ge N$ we have $\varphi_n(b)=\varphi_N(b)$. That is, the lamp configurations $\{\varphi_n\}_{n\ge 0}$ converge pointwise to a (possibly infinitely supported) lamp configuration $\varphi_{\infty}$ in $\prod_B A$. This result was first proved for finitely supported $\mu$ in \cite[Section 6.2]{KaimanovcihVershik1983}, and then more generally for any $\mu$ with a finite first moment in \cite[Theorem 3.3]{Kaimanovich2001} for $B=\Z^d$ and in \cite[Lemma 1.1]{Erschler2011wreath} for general $B$. If the lamp configuration stabilizes almost surely, then the Poisson boundary of the $\mu$-random walk on $A\wr B$ has been described in many situations in terms of the space $(\prod_B A,\lambda)$, where $\lambda$ is the hitting measure \cite{JamesPeres1996,Kaimanovich2001,KarlssonWoess2007,Sava2010,Erschler2011wreath,LyonsPeres2021,FrischSilva2024}. We refer to the introduction of \cite{FrischSilva2024} for a detailed recollection of the contributions and the cases considered on each of the above results. It is important to note, however, that the stabilization of the lamp configuration does not hold in general for probability measures with an infinite first moment. There are examples where there is no stabilization for measures $\mu$ with a finite $(1-\varepsilon)$-moment, for any $\varepsilon>0$ \cite[Proposition 1.1]{Kaimanovich1983nontrivial} (see also \cite[Section 6]{Erschler2011wreath} as well as the last paragraph of Section 5 in \cite{LyonsPeres2021}). In such cases, there is no known explicit $\mu$-boundary, and hence no candidate for the Poisson boundary.

\subsection{Growth of groups}
We finish this section by recalling the basic notions related to the growth of finitely generated groups. We refer to \cite[Chapter VI]{delaHarpe2000} for a more detailed exposition.

\begin{defn}
	The \emph{growth function} of a group $G$ with respect to a finite and symmetric generating set $S$ is given by $v_{(G,S)}:\mathbb{N}\to \R$, where $v_{(G,S)}(n)=|(S\cup \{e_G\})^n|$ for each $n\ge 1$.
\end{defn}

In general, the values of the growth function depend on the choice of $S$. Nonetheless, choosing a different generating set $S$ preserves a natural equivalence relation on functions. Let $f,g:\R_{+}\to \R_{+}$ be increasing functions. We say that $f\preccurlyeq g$ if there exist $C_1,C_2>0$ such that $f(x)\le C_1g(C_1 x +C_2)+C_2$ for every $x\in \R_{+}$. We say that $f\sim g$ if $f \preccurlyeq g$ and $g \preccurlyeq f$.  If $S_1$ and $S_2$ are finite symmetric generating sets of $G$, then $v_{(G,S_1)}(n)\sim v_{(G,S_2)}(n)$. Because of this, we will omit the reference to $S$ in our notation and just write $v_G(\cdot)$ for the growth function of $G$.

We say that $G$ has \emph{polynomial growth} if there exists $d\ge 0$ such that $v_G(n)\preccurlyeq n^d$. Every finitely generated nilpotent group has polynomial growth \cite{Wolf1968}. Furthermore, it was proved by \cite{Bass1972} and \cite{Guivarch1973} that every finitely generated nilpotent group has a growth function $v_G(n)\sim n^d$, where $d\in \mathbb{N}$ is an integer determined by the lower central series of $G$. Gromov's Theorem of polynomial growth states that every finitely generated group of polynomial growth has a nilpotent subgroup of finite index \cite{Gromov1981}. As mentioned in the introduction, one can deduce from these results that any group that has growth function $v_G(n)\preccurlyeq n^2$ has a finite index subgroup that is either trivial, $\Z$ or $\Z^2$.

\section{Convergence of probability measures}\label{section: convergence of probability measures}
In this section, we will prove some general results regarding convergence of probability measures on countable groups, which will be used in the following sections of the paper. The results in this section are well-known, and we provide their proofs for the convenience of the reader.

We start by noticing that the pointwise converge of probability measures is equivalent to the convergence in $\ell^1(G)$.
\begin{lem}\label{lem: pointwise convergence is equivalent to total variation convergence}
	Let $\mu$ be a probability measure on a countable group $G$, and consider a sequence $\{\mu_k\}_{k\ge 1}$ of probability measures on $G$. The following are equivalent.
	\begin{enumerate}
		\item  $\sum_{g\in G}|\mu_k(g)-\mu(g)|\xrightarrow[k\to \infty]{} 0$ (i.e.\ convergence in total variation).
		\item $\mu_{k}(g)\xrightarrow[k\to \infty]{}\mu(g)$, for every $g\in G$ (i.e.\ pointwise convergence).
	\end{enumerate}
\end{lem}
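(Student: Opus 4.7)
The plan is to prove the two implications separately, with (1)$\Rightarrow$(2) being essentially trivial and (2)$\Rightarrow$(1) being the substantive direction (this is essentially Scheff\'e's lemma in the countable setting).

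For the implication (1)$\Rightarrow$(2), I would simply note that for any fixed $g \in G$,
\[ |\mu_k(g) - \mu(g)| \le \sum_{h \in G} |\mu_k(h) - \mu(h)|, \]
so pointwise convergence is immediate from convergence in total variation.

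For the harder direction (2)$\Rightarrow$(1), my preferred approach is via a dominated convergence argument. Define $f_k : G \to \R_{\ge 0}$ by $f_k(g) \coloneqq (\mu(g) - \mu_k(g))^+$. Then $f_k(g) \le \mu(g)$ for every $g$ and $k$, and by hypothesis $f_k(g) \to 0$ pointwise as $k \to \infty$. Since $\mu$ is summable on $G$, the dominated convergence theorem (applied with respect to counting measure on $G$) yields $\sum_{g \in G} f_k(g) \to 0$. Now the key observation is that since $\mu$ and $\mu_k$ are both probability measures, we have
\[ \sum_{g \in G} (\mu(g) - \mu_k(g))^+ = \sum_{g \in G} (\mu_k(g) - \mu(g))^+, \]
so that
\[ \sum_{g \in G} |\mu_k(g) - \mu(g)| = 2 \sum_{g \in G} f_k(g) \xrightarrow[k \to \infty]{} 0. \]

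The only subtlety is verifying the balancing identity for the positive and negative parts; this follows from writing $0 = \sum_g (\mu(g) - \mu_k(g)) = \sum_g (\mu - \mu_k)^+ - \sum_g (\mu - \mu_k)^-$, both sums being finite because each is bounded by $1$. If one preferred a more elementary route avoiding dominated convergence, one could instead choose, given $\varepsilon > 0$, a finite set $F \subseteq G$ with $\mu(G \setminus F) < \varepsilon$, use pointwise convergence on the finite set $F$ to control $\sum_{g \in F} |\mu_k(g) - \mu(g)|$ and to deduce that $\mu_k(G \setminus F) < 2\varepsilon$ for $k$ large, and then bound the tail sum by $\mu_k(G \setminus F) + \mu(G \setminus F)$. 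I expect no real obstacle here; the statement is standard and the main point is just to record it carefully in the form needed later in the paper.
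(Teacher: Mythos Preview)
Your proof is correct. Your primary argument via dominated convergence (essentially Scheff\'e's lemma) is genuinely different from the paper's proof, which instead takes precisely the elementary route you sketch at the end: fix $\varepsilon>0$, choose a finite $F\subseteq G$ with $\mu(F)>1-\varepsilon$, use pointwise convergence on $F$ to get $|\mu_k(g)-\mu(g)|<\varepsilon/|F|$ for $g\in F$ and $k$ large, deduce $\mu_k(F)>1-2\varepsilon$, and then bound $\sum_{g\in G}|\mu_k(g)-\mu(g)|$ by splitting into $F$ and $G\setminus F$ to obtain $4\varepsilon$. Your Scheff\'e-type argument is shorter and more conceptual, exploiting the one-sided domination $(\mu-\mu_k)^+\le \mu$ together with the balancing identity; the paper's argument is completely elementary and avoids invoking dominated convergence, at the cost of a slightly longer $\varepsilon$-chase. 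Both are standard and entirely adequate here.
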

\begin{proof}
	Suppose that we have $\lim_{k\to \infty}\sum_{g\in G}|\mu_k(g)-\mu(g)|= 0$. Then, for each $h\in G$ we have
	\(
	|\mu_k(h)-\mu(h)|\le \sum_{g\in G}|\mu_k(g)-\mu(g)|\xrightarrow[k\to \infty]{}0,
	\)
	and hence $\lim_{k\to \infty}\mu_k(h)=\mu(h)$ for each $h\in G$. Therefore, the sequence of probability measures $\{\mu_k\}_{k\ge 1}$ converges pointwise to $\mu$.
	
	Conversely, let us suppose that there is pointwise convergence, so that $\mu_{k}(g)\xrightarrow[k\to \infty]{}\mu(g)$ for every $g\in G$. Let us fix an arbitrary $\varepsilon>0$. Since $\mu$ is a probability measure, there is a finite subset $F\subseteq G$ such that $\mu(F)>1-\varepsilon$. The pointwise convergence of the sequence $\{\mu_k\}_{k\ge 1}$ together with the fact that $F$ is finite imply that there is $K\ge 1$ such that, for each $k\ge K$ and every $g\in F$, we have $|\mu_k(g)-\mu(g)|<\frac{\varepsilon}{|F|}$. Hence, it holds that, for each $k\ge K$,
	\begin{equation*}
		\mu_k(F)=\sum_{g\in F}\mu_k(g)\ge \sum_{g\in F}\left(\mu(g)-\frac{\varepsilon}{F}\right)=\mu(F)-\varepsilon>1-2\varepsilon.
	\end{equation*}
	Thus, we obtain that for each $k\ge K$,
	\begin{align*}
		\sum_{g\in G}|\mu_k(g)-\mu(g)|&=\sum_{g\in F}|\mu_k(g)-\mu(g)|+\sum_{g\in G\backslash F}|\mu_k(g)-\mu(g)|\\
		&\le \sum_{g\in F}\frac{\varepsilon}{|F|}+\sum_{g\in G\backslash F}(\mu_k(g)+\mu(g))\\
		&=\varepsilon+\mu_k(G\backslash F)+\mu(G\backslash F)\\
		&\le \varepsilon+2\varepsilon +\varepsilon=4\varepsilon.
	\end{align*}
	This shows that there is convergence in $\ell^1(G)$, thus finishing the proof.
\end{proof}

Using Lemma \ref{lem: pointwise convergence is equivalent to total variation convergence} we can show that the convergence of probability measures implies the convergence of their convolution powers.

\begin{lem}\label{lem: convolutions continuity}
	Let $\mu$ be a probability measure on a countable group $G$, and consider a sequence $\{\mu_k\}_{k\ge 1}$ of probability measures on $G$ such that $\lim_{k\to \infty}\mu_{k}(g)=\mu(g)$ for every $g\in G$. Then, for each $n\ge 1$, we have $\lim_{k\to \infty}\mu_{k}^{*n}(g)=\mu^{*n}(g)$ for every $g\in G$.
\end{lem}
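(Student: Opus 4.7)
The strategy is to leave pointwise convergence behind temporarily, work in the total variation norm on $\ell^1(G)$ where convolution is well behaved, and then return to pointwise convergence through Lemma \ref{lem: pointwise convergence is equivalent to total variation convergence}. Writing $\|\nu\|_1 \coloneqq \sum_{g \in G} |\nu(g)|$ for a finite signed measure $\nu$ on $G$, the hypothesis $\mu_k(g) \to \mu(g)$ for every $g \in G$ is, by Lemma \ref{lem: pointwise convergence is equivalent to total variation convergence}, equivalent to $\|\mu_k - \mu\|_1 \to 0$. It therefore suffices to show $\|\mu_k^{*n} - \mu^{*n}\|_1 \to 0$ for every fixed $n \ge 1$, and then apply the other direction of the same lemma to conclude pointwise convergence of the convolution powers.

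The first ingredient is the well-known submultiplicativity of total variation under convolution: for any two finite signed measures $\alpha, \beta$ on $G$,
\[
\|\alpha * \beta\|_1 \;=\; \sum_{g \in G} \Bigl| \sum_{h \in G} \alpha(h)\beta(h^{-1}g) \Bigr| \;\le\; \sum_{g,h \in G} |\alpha(h)| \, |\beta(h^{-1}g)| \;=\; \|\alpha\|_1 \, \|\beta\|_1,
\]
where the last equality uses Fubini together with the substitution $g \mapsto h g$ in the inner sum. In particular, since $\|\mu_k\|_1 = \|\mu\|_1 = 1$, convolution with either $\mu$ or $\mu_k$ is a contraction on $(\ell^1(G), \|\cdot\|_1)$.

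The second ingredient is a telescoping identity. Writing
\[
\mu_k^{*(n+1)} - \mu^{*(n+1)} \;=\; \bigl(\mu_k^{*n} - \mu^{*n}\bigr) * \mu_k \;+\; \mu^{*n} * (\mu_k - \mu),
\]
the triangle inequality combined with the submultiplicativity above gives
\[
\|\mu_k^{*(n+1)} - \mu^{*(n+1)}\|_1 \;\le\; \|\mu_k^{*n} - \mu^{*n}\|_1 \;+\; \|\mu_k - \mu\|_1 .
\]
An immediate induction on $n$, starting from the trivial base case $n=1$, yields
\[
\|\mu_k^{*n} - \mu^{*n}\|_1 \;\le\; n \, \|\mu_k - \mu\|_1
\]
for every $n \ge 1$ and every $k \ge 1$. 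Letting $k \to \infty$ (with $n$ fixed) and invoking Lemma \ref{lem: pointwise convergence is equivalent to total variation convergence} in the reverse direction completes the proof.

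There is no real obstacle here: the only step that requires verification is the submultiplicativity $\|\alpha * \beta\|_1 \le \|\alpha\|_1 \|\beta\|_1$, which is a direct calculation in $\ell^1(G)$, and the telescoping identity, which is purely algebraic. The linear bound $n\|\mu_k-\mu\|_1$ is wasteful but more than sufficient for pointwise convergence at each fixed $n$.
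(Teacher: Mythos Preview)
Your proof is correct and follows essentially the same route as the paper: both arguments invoke Lemma \ref{lem: pointwise convergence is equivalent to total variation convergence} to pass to total variation, use the same two-term algebraic decomposition of $\mu_k^{*(n+1)}-\mu^{*(n+1)}$, and bound each piece by $\|\mu_k-\mu\|_1$ and $\|\mu_k^{*n}-\mu^{*n}\|_1$ respectively. The only cosmetic difference is that the paper writes out the estimate pointwise at a fixed $g$ rather than in $\ell^1$, and does not record the explicit quantitative bound $\|\mu_k^{*n}-\mu^{*n}\|_1\le n\|\mu_k-\mu\|_1$.
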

\begin{proof}
	We proceed by induction. The base case $n=1$ is the hypothesis of the lemma, so there is nothing additional to prove. Suppose now that the statement is true for some $n\ge 1$, and let us show that it holds for $n+1$. Note that the inductive hypothesis tells us that we have pointwise convergence of the probability measures $\mu_k^{*n}$ to $\mu^{*n}$ as $k\to \infty$. Thanks to Lemma \ref{lem: pointwise convergence is equivalent to total variation convergence} this implies that we also have convergence in total variation.
	
	For each $g\in G$, we obtain
	\begin{align*}
		|\mu_{k}^{*(n+1)}(g)-\mu^{*(n+1)}(g)|&=\left| \sum_{h \in G}\mu^{*n}_{k}(gh^{-1})\mu_k(h)-\mu^{*n}(gh^{-1})\mu(h) \right|\\
		&\le \sum_{h \in G} \left|\mu^{*n}_{k}(gh^{-1})\mu_k(h)-\mu_k^{*n}(gh^{-1})\mu(h)\right|+\\&\hspace{20pt}+ \sum_{h \in G}\left|\mu_k^{*n}(gh^{-1})\mu(h)-\mu^{*n}(gh^{-1})\mu(h)\right| \\
		&\le \sum_{h \in G} \mu_k^{*n}(gh^{-1})\left|\mu_k(h)-\mu(h)\right|+\\&\hspace{20pt}+ \sum_{h \in G}\left|\mu_k^{*n}(gh^{-1})-\mu^{*n}(gh^{-1})\right|\mu(h) \\
		&\le  \sum_{h \in G}\left|\mu_k(h)-\mu(h)\right|+ \sum_{h \in G}\left|\mu_k^{*n}(h)-\mu^{*n}(h)\right|.
	\end{align*}
	The right hand side converges to $0$ as $k\to \infty$ since we have convergence in total variation, and hence we conclude the proof.
	
\end{proof}

\begin{lem}\label{lem: finite time return to e continuity}
	Let $\mu$ be a probability measure on a countable group $G$, and consider a sequence $\{\mu_k\}_{k\ge 1}$ of probability measures on $G$ such that $\lim_{k\to \infty}\mu_{k}(g)=\mu(g)$ for every $g\in G$. Denote by $\{w_n\}_{n\ge 0}\in G^{\Z_{+}}$ a random walk sample path. Then for each $n\ge 1$ and every $g\in G$, we have \[\P_{\mu_k}\left(\exists \ 1\le \ell\le n\text{ s.t. }w_{\ell}=g \right)\xrightarrow[k\to \infty]{}\P_{\mu}\left(\exists \ 1\le \ell\le n\text{ s.t. }w_{\ell}=g \right).\]
\end{lem}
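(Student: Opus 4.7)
The plan is to identify the event $\{\exists\, 1 \le \ell \le n : w_\ell = g\}$ with a subset of the finite-dimensional product space $G^n$ and invoke total variation convergence of the $n$-fold product measures $\mu_k^{\otimes n} \to \mu^{\otimes n}$. Concretely, setting
\[
B_n \;:=\; \bigl\{(g_1,\ldots,g_n) \in G^n : g_1 g_2 \cdots g_\ell = g \text{ for some } 1\le \ell\le n\bigr\},
\]
the definition of $\P_\mu$ as the push-forward of $\mu^{\Z_{+}}$ through the cumulative-product map identifies the probability in question with $\mu^{\otimes n}(B_n)$, and analogously $\P_{\mu_k}(\exists\, 1\le \ell\le n: w_\ell = g) = \mu_k^{\otimes n}(B_n)$. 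Since $G^n$ is countable, every subset is measurable, so it suffices to show that $\mu_k^{\otimes n} \to \mu^{\otimes n}$ in total variation on $G^n$.

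For this I would use a standard telescoping argument: expand
\[
\prod_{i=1}^n \mu_k(g_i) - \prod_{i=1}^n \mu(g_i) \;=\; \sum_{i=1}^n \Bigl(\prod_{j<i}\mu_k(g_j)\Bigr)\bigl(\mu_k(g_i)-\mu(g_i)\bigr)\Bigl(\prod_{j>i}\mu(g_j)\Bigr),
\]
take absolute values, sum over $(g_1,\ldots,g_n)\in G^n$, and observe that for each fixed $i$ the sum over the coordinates $g_j$ with $j \neq i$ of the "other" product factors equals $1$. This yields
\[
\bigl\|\mu_k^{\otimes n} - \mu^{\otimes n}\bigr\|_{\mathrm{TV}} \;\le\; n \sum_{g\in G}|\mu_k(g) - \mu(g)|,
\]
whose right-hand side tends to $0$ by Lemma \ref{lem: pointwise convergence is equivalent to total variation convergence}. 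Consequently $|\mu_k^{\otimes n}(B_n) - \mu^{\otimes n}(B_n)| \le \|\mu_k^{\otimes n} - \mu^{\otimes n}\|_{\mathrm{TV}} \to 0$, which is the claim.

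I do not anticipate a genuine obstacle here: the event depends on only the first $n$ coordinates of the sample path, so we are working with a finite-dimensional product measure on a countable set, and total variation convergence of $\mu_k$ to $\mu$ lifts to the $n$-fold product by a routine estimate. An equivalent, slightly more concrete route proceeds via inclusion–exclusion: writing $\{\exists\, 1\le \ell\le n: w_\ell = g\} = \bigcup_{\ell=1}^n \{w_\ell = g\}$ and expanding, each of the finitely many resulting terms reads $\P_\mu(w_{\ell_1} = g, \ldots, w_{\ell_r} = g) = \mu^{*\ell_1}(g)\prod_{i=1}^{r-1} \mu^{*(\ell_{i+1}-\ell_i)}(e_G)$, which converges as $k \to \infty$ by Lemma \ref{lem: convolutions continuity}.
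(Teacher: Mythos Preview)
Your proof is correct. The main route you take---lifting to the product space $G^n$ and proving $\mu_k^{\otimes n}\to\mu^{\otimes n}$ in total variation via the telescoping identity---is different from the paper's argument. The paper instead decomposes the event $\{\exists\,1\le\ell\le n:w_\ell=g\}$ according to the precise set of visit times in $\{1,\ldots,n\}$, writes the probability as a finite sum of products of convolution powers, and then applies Lemma~\ref{lem: convolutions continuity} term by term. Your approach is cleaner and strictly more general: it shows in one stroke that the probability of \emph{any} event depending only on the first $n$ increments is continuous in the step distribution, without using the structure of this particular event; it also avoids invoking Lemma~\ref{lem: convolutions continuity}, relying only on Lemma~\ref{lem: pointwise convergence is equivalent to total variation convergence}. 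The paper's route has the small advantage of reusing a lemma already in hand. Your alternative inclusion--exclusion sketch at the end is essentially the paper's strategy, and your formula for the intersection probabilities (with $e_G$ in the increment factors) is the correct one.
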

\begin{proof}
	One can write the event where there is $1\le \ell \le n$ such that $w_\ell=g$ as a disjoint union of events in which we precise the exact moments between $1$ and $n$ where the random walk visits $g$. From this, we obtain the equality
	\begin{align*}
		\P_{\mu_k}\left(\exists \ 1\le \ell\le n\text{ s.t. }w_{\ell}=g \right)=\sum_{m=1}^n\sum_{1\le \ell_1<\ell_2<\cdots \ell_m\le n}\mu^{*\ell_1}_k(g)\mu_k^{*(\ell_2-\ell_1)}(g)\cdots \mu_k^{*(\ell_m-\ell_{m-1})}(g).
	\end{align*}
	Then, we use Lemma \ref{lem: convolutions continuity} to see that each convolution power of $\mu_k$ converges as $k\to \infty$ to the respective convolution power of $\mu$.
\end{proof}

The following proposition will be used several times in Section \ref{section: entropy estimates}. Recall from Definition \ref{def: escape proba} that $\pesc{\mu}\coloneqq \P_{\mu}(w_n\neq e_G \text{ for every }n\ge 1)$.

\begin{prop}\label{prop: uniform decay for return to a finite subset} Let $G$ be a countable group and let $\mu$ be a probability measure on $G$ such that $\langle\supp{\mu}\rangle_{+}$ is symmetric. Consider a sequence of probability measures $\{\mu_k\}_{k\ge 1}$ on $G$ such that $\lim_{k\to \infty}\mu_{k}(g)=\mu(g)$ for every $g\in G$. Suppose that the $\mu$-random walk on $G$ is transient, and that $\lim_{k\to \infty}\pesc{\mu_k}=\pesc{\mu}$. Then for every $\varepsilon>0$ and any finite subset $F\subseteq G$, there exist $K,n_0\ge 1$ such that for each $k\ge K$ we have
	\[
	\P_{\mu_k}\Big( \text{there is } \ell > n_0 \text{ such that }w_{\ell}\in F \Big)<\varepsilon.
	\]
\end{prop}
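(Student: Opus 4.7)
The plan is to bound the event of returning to $F$ after time $n_0$ via a last-exit decomposition, reducing the tail estimate for arbitrary $g\in F$ to a uniform tail estimate for the Green function at the identity. First, I would apply the strong Markov property at the last visit time $L_g := \sup\{\ell\ge 0:w_\ell=g\}$, which is $\P_{\mu_k}$-a.s.\ finite by transience (guaranteed since $\pesc{\mu_k}\to\pesc{\mu}>0$). Translation invariance gives $\P_{\mu_k}(L_g=\ell) = \mu_k^{*\ell}(g)\cdot \pesc{\mu_k}$, hence
\[
\P_{\mu_k}(\exists\, \ell > n_0 : w_\ell = g) = \pesc{\mu_k}\sum_{\ell > n_0} \mu_k^{*\ell}(g).
\]
A union bound over $g\in F$ combined with $\pesc{\mu_k}\le 1$ reduces the proposition to showing $\sum_{g\in F}\sum_{\ell>n_0}\mu_k^{*\ell}(g)<\varepsilon$ for all $k\ge K$ and some $n_0$.

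The heart of the argument is the case $g=e$. Lemma \ref{lem: expected number of visits is 1 over 1-return prob} gives $\sum_{\ell\ge 0}\mu_k^{*\ell}(e)=1/\pesc{\mu_k}$, and the hypothesis forces the total masses to converge to the finite value $1/\pesc{\mu}$. Combined with the pointwise convergence $\mu_k^{*\ell}(e)\to\mu^{*\ell}(e)$ from Lemma \ref{lem: convolutions continuity}, I would apply a Scheffé-type splitting: pick $n_0$ large enough that $\sum_{\ell>n_0}\mu^{*\ell}(e)<\delta/3$, use pointwise convergence of the finite partial sum $\sum_{\ell\le n_0}\mu_k^{*\ell}(e)\to\sum_{\ell\le n_0}\mu^{*\ell}(e)$, and combine with convergence of the total sums to conclude $\sum_{\ell>n_0}\mu_k^{*\ell}(e)<\delta$ for every $k\ge K_0$.

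To transfer this uniform tail estimate from $e$ to an arbitrary $g\in F\cap H$, where $H:=\langle\supp{\mu}\rangle_+$ is a subgroup by the symmetry hypothesis, I would exploit a convolution comparison. Since $g^{-1}\in H$, pick $L_g\ge 0$ with $\mu^{*L_g}(g^{-1})>0$; by Lemma \ref{lem: convolutions continuity} there is $K_g$ with $\mu_k^{*L_g}(g^{-1})\ge q_g:=\mu^{*L_g}(g^{-1})/2>0$ for every $k\ge K_g$. The elementary inequality
\[
\mu_k^{*(\ell+L_g)}(e)=\sum_{h\in G}\mu_k^{*\ell}(h)\,\mu_k^{*L_g}(h^{-1})\ge \mu_k^{*\ell}(g)\,\mu_k^{*L_g}(g^{-1})\ge q_g\,\mu_k^{*\ell}(g)
\]
yields $\mu_k^{*\ell}(g)\le q_g^{-1}\mu_k^{*(\ell+L_g)}(e)$, and summing over $\ell>n_0$ bounds the tail for $g$ by that at $e$. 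Choosing $\delta$ so that $\delta\sum_{g\in F\cap H}q_g^{-1}<\varepsilon/2$ and setting $K:=\max(K_0,\max_{g\in F\cap H}K_g)$ completes the main case.

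The principal obstacle is the Scheffé-type step: without the hypothesis $\pesc{\mu_k}\to\pesc{\mu}$, one would only have the Fatou inequality $\liminf_k\sum_\ell \mu_k^{*\ell}(e)\ge \sum_\ell \mu^{*\ell}(e)$, which is insufficient for uniform tail control; this is precisely where the escape-probability convergence is used. A minor additional point is the case $g\in F\setminus H$, where $\mu^{*\ell}(g)\equiv 0$ and the comparison above is vacuous; here one uses $\sum_{\ell>n_0}\mu_k^{*\ell}(g)\le \sum_{\ell>n_0}\mu_k^{*\ell}(e)$ together with the pointwise convergence $\mu_k^{*\ell}(g)\to 0$ and the same splitting to force the tail to be uniformly small for $k$ large.
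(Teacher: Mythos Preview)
Your main argument for $g\in F\cap H$, where $H=\langle\supp{\mu}\rangle_+$, is correct and is a clean variant of the paper's proof. Both arguments reduce to a uniform tail estimate at the identity and then transfer it to general $g$ via the symmetry hypothesis, but the packaging differs. The paper works throughout with event probabilities: it shows $\P_{\mu_k}(\exists\,\ell>n_0:w_\ell=e)\to \P_{\mu}(\exists\,\ell>n_0:w_\ell=e)$ by decomposing the event $\{\exists\,\ell\ge1:w_\ell=e\}$ (whose probability is $1-\pesc{\mu_k}$) into a ``by time $n_0$'' piece and an ``after $n_0$'' piece, invoking Lemma~\ref{lem: finite time return to e continuity} for the former. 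It then compares $g$ to $e$ by noting that from $g$ the walk reaches $e$ within $m$ steps with probability bounded below by some $\delta>0$, giving $\P_{\mu_k}(\exists\,\ell>n_0:w_\ell=g)\le \delta^{-1}\P_{\mu_k}(\exists\,\ell>n_0:w_\ell=e)$. You instead work with Green-function tails, using the last-exit identity to rewrite the event probability as $\pesc{\mu_k}\sum_{\ell>n_0}\mu_k^{*\ell}(g)$; your Scheff\'e-type splitting at $g=e$ is the exact Green-function analogue of the paper's event decomposition, and your convolution inequality $\mu_k^{*\ell}(g)\le q_g^{-1}\mu_k^{*(\ell+L_g)}(e)$ is the pointwise analogue of their hitting comparison. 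Your route is marginally more direct in that it bypasses Lemma~\ref{lem: finite time return to e continuity} and works straight from Lemma~\ref{lem: convolutions continuity}, but the two arguments are otherwise parallel.

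Your treatment of $g\in F\setminus H$ contains an error: the inequality $\sum_{\ell>n_0}\mu_k^{*\ell}(g)\le \sum_{\ell>n_0}\mu_k^{*\ell}(e)$ is false in general (only the full sums satisfy $\sum_{\ell\ge0}\mu_k^{*\ell}(g)\le\sum_{\ell\ge0}\mu_k^{*\ell}(e)$, not the tails), and the ``same splitting'' does not apply because nothing forces $\sum_{\ell\ge0}\mu_k^{*\ell}(g)\to 0$. That said, the paper's proof also tacitly assumes $F\subseteq H$ at the step asserting $\inf_{g\in F}\P_\mu(\exists\,0\le i\le m:w_i=e\mid w_0=g)>0$, since for $g\notin H$ this probability vanishes. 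In every application of this proposition in the paper $\mu$ is non-degenerate (so $H=G$), making this a cosmetic issue rather than a substantive one.
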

\begin{proof} 
	Recall that from Lemma \ref{lem: finite time return to e continuity} we have that the convergence of $\mu_k$ to $\mu$ implies that for every $n_0\ge 1$, we have
	\begin{equation}\label{eq: convergence at finite time of return}
		\P_{\mu_k}\left(\exists \ 1\le \ell\le n_0 \text{ such that }w_{\ell}=e_G\right)\xrightarrow[k\to \infty]{}\P_{\mu}\left(\exists\  1\le \ell\le n_0 \text{ such that }w_{\ell}=e_G\right).
	\end{equation}	
	Consider now an arbitrary $n_0 \ge 1$, and note that we have
	\begin{align*}
		\P_{\mu_k}\left( \exists\ell \ge 1 \text{ such that }w_{\ell}=e_G\right)&=\P_{\mu_k}\left(\exists 1\le \ell\le n_0 \text{ such that }w_{\ell}=e_G\right)+\\ & \hspace{30pt}+\P_{\mu_k}\left(\exists \ell >n_0 \text{ such that }w_{\ell}=e_G\right).
	\end{align*}
	The hypothesis of continuity of the escape probability implies that the term on the left side above converges to $\P_{\mu}\left( \exists\ \ell \ge 1 \text{ such that }w_{\ell}=e_G\right)$. In addition, the first term on the right converges to $\P_{\mu}\left( \exists\  1\le \ell\le n_0 \text{ such that }w_{\ell}=e_G\right)$ thanks to Equation \eqref{eq: convergence at finite time of return}. From this we obtain that, for each $n_0\ge 1$, we have
	\begin{equation}\label{eq: convergence of return prob beyond n0}
		\P_{\mu_k}\left(\exists \ell >n_0 \text{ such that }w_{\ell}=e_G\right)\xrightarrow[k\to \infty]{}\P_{\mu}\left( \exists\ell >n_0 \text{ such that }w_{\ell}=e_G\right)
	\end{equation}
	
	Let $\varepsilon>0$ and fix any finite subset $F\subseteq G$.
	Recall that we are assuming that $\langle\supp{\mu}\rangle_{+}$ is symmetric: hence, for each $g\in \langle\supp{\mu}\rangle_{+}$, there is $N\ge 1$ such that $g^{-1} \in \supp{\mu^{*N}}$. Together with the fact that $F$ is finite, this implies that we can find $m\ge 1$ such that 
	\[
	\inf_{g\in F} \P_{\mu}\left(\exists\ 0\le i\le m \text{ such that }w_i=e_G\mid w_0=g \right)
	>0.
	\]
	Furthermore, since the sequence $\{\mu_k\}_{k\ge 1}$ converges to $\mu$, we can find $K_1\ge 1$ and $\delta>0$ such that for every $k\ge K_1$ we have
	\[
	\inf_{g\in F} \P_{\mu_k}\left(\exists\ 0\le i\le m \text{ such that }w_i=e_G\mid w_0=g \right)>\delta>0.
	\]
	
	Let $K_1$ be as above. Then, for each $g\in F $ and any $n_0\ge1 $, we have
	\begin{align*}
		\P_{\mu_k}\Big(\exists \ell > n_0 \text{ s.t. }w_{\ell}=e_G \Big)&\ge \P_{\mu_k}\Big(\exists \ell > n_0 \text{ and }0\le i\le m \text{ s.t. }w_{\ell} =g \text{ and }w_{\ell+i}=e_G\Big)\\
		&=\P_{\mu_k}\Big(\exists 0\le i\le m \text{ s.t. }w_{\ell+i}=e_G\mid \exists \ell > n_0 \text{ s.t. }w_{\ell} =g\Big)\cdot\\ &\hspace{40pt}\cdot\P_{\mu_k}\left(\exists \ell > n_0 \text{ such that }w_{\ell} =g\right)\\
		&\ge \delta\P_{\mu_k}\left(\exists \ell > n_0 \text{ such that }w_{\ell} =g\right).
	\end{align*}
	As a consequence, we obtain
	\begin{align*}
		\P_{\mu_k}\Big( \exists \ell > n_0 \text{ such that }w_{\ell}\in F \Big)&\le\sum_{g\in F }\P_{\mu_k}\Big( \exists \ell > n_0 \text{ such that }w_{\ell}=g \Big)\\
		&\le\frac{|F|}{\delta}\P_{\mu_k}\Big(\exists \ell >n_0 \text{ such that }w_{\ell}=e_G \Big).
	\end{align*}
	
	We are assuming that the $\mu$-random walk on $G$ is transient, thus we can find $n_0\ge 1$ such that
	\[
	\P_{\mu}\left( \exists\  \ell> n_0 \text{ such that }w_{\ell}=e_G \right)<\frac{\varepsilon\delta}{2|F|}
	\]
	We can now use the convergence of Equation \eqref{eq: convergence of return prob beyond n0} to find $K_2\ge K_1$ such that, for every $k\ge K_2$, we have 
	\[
	\P_{\mu_k}\Big(\exists \ell > n_0 \text{ such that }w_{\ell}=e_G \Big)\le\P_{\mu}\Big(\exists \ell > n_0 \text{ such that }w_{\ell}=e_G \Big)+\frac{\varepsilon \delta}{2|F|}<\frac{\varepsilon \delta}{|F|}.
	\]
	Replacing this in the above inequality, we obtain that, for every $k\ge K_2$, 
	\begin{align*}
		\P_{\mu_k}\Big( \exists \ell > n_0 \text{ such that }w_{\ell}\in F \Big)&\le\frac{|F|}{\delta}\frac{\varepsilon \delta}{|F|}\le\varepsilon,
	\end{align*}
	which allows us to conclude the proof.
\end{proof}

In the remaining of this section, we prove convergence results for the Shannon entropy and asymptotic entropy of probability measures on countable groups.

The following result is contained in \cite[Section 3]{AmirAngelVirag2013}. 	We recall that a sequence $\{\mu_k\}_{k\ge 1}$ of probability measures on $G$ is said to be \emph{tight} if, for every $\varepsilon>0$, there is a finite subset $F\subseteq G$ such that $\sum_{g\in G\backslash F}\mu_k(g)<\varepsilon$ for every $k\ge 1$. In particular, every weakly convergent sequence of probability measures is tight. The sequence $\{\mu_k\}_{k\ge 1}$ is said to be \emph{entropy-tight} if, for every $\varepsilon>0$, there is a finite subset $F\subseteq G$ such that for all $k\ge 1$ we have $\sum_{g\in G\backslash F}-\mu_k(g)\log(\mu_k(g))<\varepsilon$.

\begin{lem}\label{lem: convolutions entropy convergence} Let $G$ be a countable group and consider a probability measure $\mu$ on $G$ with $H(\mu)<\infty$. Consider a sequence of probability measures $\{\mu_k\}_{k\ge 1}$ on $G$ such that $\lim_{k\to \infty}\mu_k(g)=\mu(g)$ for all $g\in G$, $H(\mu_k)<\infty$ for all $k\ge 1$, and $\lim_{k\to \infty}H(\mu_k)=H(\mu)$. Then, for any $n\ge 1$, we have $\lim_{k\to\infty}H(\mu^{*n}_k)=H(\mu^{*n})$.
\end{lem}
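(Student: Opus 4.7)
The plan is to induct on $n$, taking the hypothesis as the base case $n=1$ and promoting convergence from $\mu_k^{*n}$ to $\mu_k^{*(n+1)}$. Suppose $H(\mu_k^{*n}) \to H(\mu^{*n})$. By Lemma~\ref{lem: convolutions continuity}, $\mu_k^{*(n+1)}(g)\to \mu^{*(n+1)}(g)$ for every $g\in G$. Since $\varphi(x)=-x\log x$ is continuous and nonnegative on $[0,1]$, Fatou's lemma applied termwise on the countable set $G$ gives the lower semicontinuity of Shannon entropy under pointwise convergence, and in particular
\[
\liminf_{k\to\infty} H(\mu_k^{*(n+1)}) \ \ge\ H(\mu^{*(n+1)}).
\]
The task therefore reduces to the matching upper bound $\limsup_k H(\mu_k^{*(n+1)}) \le H(\mu^{*(n+1)})$.

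To this end, let $X_k$ and $Y_k$ be independent random variables on $G$ with laws $\mu_k$ and $\mu_k^{*n}$ respectively, so that $X_kY_k\sim \mu_k^{*(n+1)}$; define $(X,Y)$ analogously with laws $(\mu,\mu^{*n})$. Since $Y_k$ is a deterministic function of $(X_k, X_kY_k)$ via $Y_k = X_k^{-1}(X_kY_k)$, the chain rule and independence combine into the identity
\[
H(X_kY_k)\ =\ H(X_k)+H(Y_k)-H(X_k\mid X_kY_k),
\]
and all three terms are finite by the bound $H(X_k\mid X_kY_k)\le H(X_k,Y_k)=H(\mu_k)+H(\mu_k^{*n})<\infty$. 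By hypothesis and the inductive assumption, $H(X_k)+H(Y_k)\to H(X)+H(Y)$. Hence, using the analogous identity $H(XY)=H(X)+H(Y)-H(X\mid XY)$ at the limit, the desired upper bound follows once we establish
\[
\liminf_{k\to\infty} H(X_k\mid X_kY_k) \ \ge\ H(X\mid XY).
\]

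This key conditional-entropy inequality I would prove by two successive applications of Fatou's lemma. Pointwise convergence of $\mu_k$ and $\mu_k^{*n}$ implies pointwise convergence of the joint laws on $G\times G$, and for each $g\in G$ with $\mathbb{P}(XY=g)>0$ the conditional law of $X_k$ given $X_kY_k=g$ converges pointwise on $G$ to the conditional law of $X$ given $XY=g$. The fiberwise lower semicontinuity of Shannon entropy, exactly as in the opening paragraph, yields $\liminf_k H(X_k\mid X_kY_k=g)\ge H(X\mid XY=g)$; fibers with $\mathbb{P}(XY=g)=0$ contribute nonnegatively and cause no issue. A second application of Fatou to the nonnegative summands $\mathbb{P}(X_kY_k=g)\,H(X_k\mid X_kY_k=g)$, together with the pointwise convergence $\mathbb{P}(X_kY_k=g)\to \mathbb{P}(XY=g)$, delivers the displayed inequality on the global conditional entropies.

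The main obstacle I anticipate is the careful bookkeeping of the two Fatou steps, in particular ensuring that no entropy mass escapes to infinity in $g$ as $k\to\infty$. Such escape is precluded by the uniform bound $H(X_k\mid X_kY_k)\le H(\mu_k)+H(\mu_k^{*n})$, whose right-hand side converges to the finite limit $H(\mu)+H(\mu^{*n})$ by hypothesis and the inductive assumption; this turns the $\liminf$–$\limsup$ sandwich into an actual equality and closes the induction.
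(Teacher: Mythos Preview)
Your proof is correct and takes a genuinely different route from the paper's. The paper does not argue directly: it simply invokes two lemmas from \cite{AmirAngelVirag2013}, showing that the sequence $\{\mu_k^{*n}\}_{k\ge 1}$ is \emph{entropy-tight} (via \cite[Lemma 3.5]{AmirAngelVirag2013}) and then concluding continuity of entropy from tightness plus entropy-tightness (via \cite[Lemma 3.2]{AmirAngelVirag2013}). Your argument is self-contained and more elementary: the identity $H(X_kY_k)=H(X_k)+H(Y_k)-H(X_k\mid X_kY_k)$ reduces the upper bound to a lower semicontinuity statement for conditional entropy, which you obtain by two nested applications of Fatou. This avoids the entropy-tightness machinery entirely, at the cost of an induction on $n$; the paper's approach, by contrast, handles all $n$ uniformly once the cited lemmas are in hand.

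Two minor comments on the write-up. First, your final paragraph slightly overstates the role of the uniform bound $H(X_k\mid X_kY_k)\le H(\mu_k)+H(\mu_k^{*n})$: Fatou for nonnegative summands needs no upper bound, so ``escape of mass'' is not an obstruction to the $\liminf$ inequality you want. The bound is relevant only to ensure all terms in the identity are finite (and for that, the sharper $H(X_k\mid X_kY_k)\le H(X_k)$ already suffices). Second, in the double-Fatou step you implicitly use that if $b_k\to b>0$ and $c_k\ge 0$ then $\liminf_k b_kc_k = b\,\liminf_k c_k$; this is standard but worth stating, since the termwise inequality $\liminf_k f_k(g)\ge f(g)$ is what makes the outer Fatou go through.
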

\begin{proof}
	For each $n\ge 1$, it follows from Lemma \ref{lem: convolutions continuity} that the sequence $\{\mu_k^{*n}\}_{k\ge 1}$ converges weakly to $\mu^{*n}$, so that this sequence is tight. In addition, it follows from \cite[Lemma 3.5]{AmirAngelVirag2013}, together with our assumption that $H(\mu_k)\xrightarrow[k\to \infty]{}H(\mu)$, that this sequence is entropy-tight. Then, we conclude from \cite[Lemma 3.2]{AmirAngelVirag2013} that $H(\mu_k^{*n})\xrightarrow[k\to \infty]{}H(\mu^{*n})$.
\end{proof}

Finally, we recall the fact that the asymptotic entropy is upper-semicontinuous as a function of the step distribution $\mu$. The following result is proved in \cite[Proposition 4]{AmirAngelVirag2013}; see also \cite[Lemma 1]{Erschler2011}, where this result is proved with the additional assumption that the supports of all the probability measures in the sequence are contained in a fixed finite set.
\begin{prop}[{\cite[Proposition 4]{AmirAngelVirag2013}}]\label{prop: upper semicontinuous entropy} Let $G$ be a countable group and consider a probability measure $\mu$ on $G$ together with a sequence of probability measures $\{\mu_{k}\}_{k\ge 1}$ on $G$. Suppose that $\lim_{k\to \infty}\mu_{k}(g)=\mu(g)$ for every $g\in G$, and that $\lim_{k\to \infty}H(\mu_k)=H(\mu)$. Then we have $\limsup_{k\to \infty}h(\mu_k)\le h(\mu)$.
\end{prop}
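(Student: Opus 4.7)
The plan is to exploit the subadditivity of the sequence $\{H(\mu^{*n})\}_{n \ge 1}$, which reduces the asymptotic statement to a finite-time statement that is handled by Lemma \ref{lem: convolutions entropy convergence}.

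First, I would recall the standard inequality $H(\mu^{*(n+m)}) \le H(\mu^{*n}) + H(\mu^{*m})$ for all $n,m \ge 1$. This holds because $\mu^{*(n+m)}$ is the distribution of $w_{n+m} = w_n \cdot (w_n^{-1} w_{n+m})$, where the two factors are independent with laws $\mu^{*n}$ and $\mu^{*m}$, respectively; entropy is subadditive under convolution since the entropy of a sum (product) of independent random variables is at most the sum of their entropies. Fekete's lemma then gives
\[
h(\mu) = \lim_{n \to \infty} \frac{H(\mu^{*n})}{n} = \inf_{n \ge 1} \frac{H(\mu^{*n})}{n},
\]
and the same identity holds for each $\mu_k$.

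Next, I would fix an arbitrary $n \ge 1$. By Lemma \ref{lem: convolutions entropy convergence}, the hypotheses $\lim_{k\to\infty}\mu_k(g) = \mu(g)$ for every $g \in G$ together with $\lim_{k\to \infty} H(\mu_k) = H(\mu)$ imply
\[
\lim_{k \to \infty} H(\mu_k^{*n}) = H(\mu^{*n}).
\]
Since $h(\mu_k) \le H(\mu_k^{*n})/n$ for every $k$, taking the limit superior yields
\[
\limsup_{k \to \infty} h(\mu_k) \;\le\; \limsup_{k \to \infty} \frac{H(\mu_k^{*n})}{n} \;=\; \frac{H(\mu^{*n})}{n}.
\]

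Finally, since the above inequality holds for every $n \ge 1$, I would take the infimum over $n$ to conclude
\[
\limsup_{k \to \infty} h(\mu_k) \;\le\; \inf_{n \ge 1} \frac{H(\mu^{*n})}{n} \;=\; h(\mu).
\]
The only non-routine ingredient is the continuity of convolution entropies in Lemma \ref{lem: convolutions entropy convergence}, which is already established in the preceding discussion; everything else is an immediate consequence of subadditivity. The main conceptual point is that upper semicontinuity of $h$ reflects the fact that $h(\mu)$ is an infimum of continuous functionals $\mu \mapsto H(\mu^{*n})/n$.
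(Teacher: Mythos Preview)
Your proof is correct and follows exactly the approach the paper indicates: the paper does not give its own proof of this proposition but cites \cite[Proposition 4]{AmirAngelVirag2013} and explains in the sketch (Subsection \ref{subsection: sketch of proof continuity of entropy wreath products}, Item \eqref{sketch: step 1}) that the result comes from subadditivity of $\{H(\mu^{*n})\}_{n\ge 1}$, which is precisely the mechanism you use together with Lemma \ref{lem: convolutions entropy convergence}.
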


\section{Continuity of return probability to the origin}\label{section: continuity of return probability}

It is proved in \cite[Proposition 2]{Erschler2011} that, if $\mu$ is a finitely supported symmetric probability measure on a finitely generated group $G$ and $\{\mu_k\}_{k\ge 1}$ is a sequence of probability measures on $G$ such that 
\begin{enumerate}
	\item there exists a finite subset $F\subseteq G$ such that $\supp{\mu_k}\subseteq F$ for all $k\ge 0$, and
	\item $\lim_{k\to \infty}\mu_k(g)=\mu(g)$, for all $g\in G$,
\end{enumerate}
then $\lim_{k\to \infty} \pesc{\mu_k}=\pesc{\mu}.$ The results in \cite{Erschler2011} are written in terms of the asymptotic range of the $\mu$-random walk, but this value is the same as the escape probability; see Equation \eqref{equation: asymptotic range} in Subsection \ref{subsection: escape prob and the asymptotic range}. The objective of this section is to prove Theorem \ref{thm: continuity of range}, which generalizes the above result to a setting that includes infinitely supported probability measures, and non-symmetric limit probability measures $\mu$, under assumptions on the growth of the subgroup generated by the support of $\mu$.
\subsection{The comparison lemma of Coulhon and Saloff-Coste}

In this subsection, it will be convenient to use the notation $p_{\mu}^{(n)}(x,y)\coloneqq \mu^{*n}(x^{-1} y)$, for $\mu$ a probability measure on $G$, $n\ge 1$ and $x,y\in G$.

\begin{defn} Let $\mu$ be a probability measure on $G$. We define the Markov operator $T_{\mu}$ associated with $\mu$ by
	\[
	T_{\mu}f(g)\coloneqq\sum_{h\in G} p_{\mu}(g,h)f(h)=\sum_{h\in G} f(gh)\mu(h), \text{ for every }f:G\to \R.
	\]	
\end{defn} 
Note that, if $\mu$ is a probability measure on $G$ and $n\ge 1$, then the $n$-th convolution $\mu^{*n}$ is also a probability measure on $G$ and, moreover, $T_{\mu^{*n}}=T_{\mu}^n$.

For each probability measure $\mu$ on $G$ and for every $1\le p\le \infty$, the operator $T_{\mu}$ is bounded from $\ell^{p}(G)$ to $\ell^{p}(G)$. Since $T_{\mu}$ is also bounded from $\ell^{1}(G)$ to $\ell^{\infty}(G)$, the Riesz-Thorin interpolation theorem implies that $T_{\mu}$ is bounded from $\ell^{p}(G)$ to $\ell^{q}(G)$ for every $1\le p\le q\le \infty$.  We will denote by $\|T_{\mu}\|_{p\to q}$ the operator norm of $T_{\mu}:\ell^{p}(G)\to \ell^{q}(G)$, for each $1\le p\le q\le \infty$.  In the following lemma we recall well-known properties of the operator $T_{\mu}$ which we will use in the proofs below.
\begin{lem}\label{lem: Lp Markov operator basic facts}
	Let $\mu$ be a probability measure on a countable group $G$, and consider its associated operator $T_{\mu}$. Then the following hold.
	\begin{enumerate}
		\item \label{item: pp lem LpLq} For each $1\le p<\infty$, $\|T_{\mu}\|_{p\to p}\le 1$.
		\item \label{item: 12 lem LpLq} We have $\|T_{\mu}\|_{1\to 2}\le 1$.
		\item \label{item:  1infty lem LpLq} We have $\|T_{\mu}\|_{1\to \infty}=\sup_{x,y\in G}p_{\mu}(x,y).$
		\item \label{item: adjoint lem LpLq} Let $\check{\mu}$ be the reflected measure of $\mu$, i.e.\ $\check{\mu}(g)=\mu(g^{-1})$ for each $g\in G$. Consider $1\le p\le q\le \infty$, and $p^{\prime},q^{\prime}$ such that $1/p+1/p^{\prime}=1/q+1/q^{\prime}=1$. Then the adjoint operator of $T_{\mu}:\ell^p(G)\to \ell^q(G)$ is $T_{\check{\mu}}:\ell^{q^{\prime}}(G)\to \ell^{p^{\prime}}(G)$. In particular, $\|T_{\mu}\|_{p\to q}=\|T_{\check{\mu}}\|_{q^{\prime}\to p^{\prime}}$.
	\end{enumerate}
	
\end{lem}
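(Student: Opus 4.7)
The plan is to prove each item by direct computation, relying on the fact that $\mu$ is a probability measure and that on a discrete group $G$ (with counting measure) the $\ell^p$-norms are non-increasing in $p$. I will use Jensen's inequality for item (1), comparison between $\ell^p$-norms for item (2), a matching test function for the equality in item (3), and a change-of-variable via $h\mapsto h^{-1}$ together with Fubini for the adjoint identity in item (4).

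For item (1), fix $f\in\ell^p(G)$ and apply Jensen's inequality to the convex function $t\mapsto |t|^p$ with respect to the probability measure $\mu$: for each $g\in G$,
\[
|T_\mu f(g)|^p = \Bigl|\sum_{h\in G} f(gh)\,\mu(h)\Bigr|^p \le \sum_{h\in G} |f(gh)|^p\,\mu(h).
\]
Summing over $g$ and interchanging the order of summation (all terms being non-negative) gives $\|T_\mu f\|_p^p \le \sum_h \mu(h)\|f\|_p^p = \|f\|_p^p$, proving (1). For item (2), combine (1) with $p=1$ and the elementary inequality $\|\varphi\|_2\le \|\varphi\|_1$ valid on any countable set (since $\sum |\varphi|^2 \le (\sup |\varphi|)\sum |\varphi|\le \|\varphi\|_1^2$), applied to $\varphi = T_\mu f$.

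For item (3), the upper bound follows from
\[
|T_\mu f(g)| = \Bigl|\sum_{h\in G} f(h)\,p_\mu(g,h)\Bigr| \le \Bigl(\sup_{x,y\in G} p_\mu(x,y)\Bigr)\,\|f\|_1,
\]
so $\|T_\mu\|_{1\to\infty}\le \sup_{x,y} p_\mu(x,y)$. For the matching lower bound, I will test on indicator functions: for any $y_0\in G$, the function $f=\delta_{y_0}$ has $\|f\|_1=1$ and $T_\mu f(g)=p_\mu(g,y_0)$, hence $\|T_\mu f\|_\infty = \sup_g p_\mu(g,y_0)$. Taking the supremum over $y_0$ yields the reverse inequality and therefore the claimed equality.

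For item (4), the key step is the computation
\[
\langle T_\mu f,\, g\rangle = \sum_{x\in G}\sum_{h\in G} f(xh)\mu(h)\,g(x).
\]
I will substitute $y=xh$ (so $x=yh^{-1}$) in the inner summation, and then substitute $k=h^{-1}$ so that $\mu(h)=\mu(k^{-1})=\check\mu(k)$. Interchanging sums via Fubini (justified by absolute convergence for the relevant pairings $(p,q)\leftrightarrow(q',p')$, using items (1)--(3)), the expression becomes
\[
\sum_{y\in G} f(y)\sum_{k\in G}\check\mu(k)\,g(yk)=\langle f,\,T_{\check\mu} g\rangle,
\]
which identifies $T_{\check\mu}$ as the adjoint of $T_\mu$ for the dual pairing $(\ell^q,\ell^{q'})\mapsto(\ell^p,\ell^{p'})$. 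The equality of operator norms $\|T_\mu\|_{p\to q}=\|T_{\check\mu}\|_{q'\to p'}$ then follows from the standard duality identity $\|T\|=\|T^*\|$. There is no real obstacle here; the only place to be slightly careful is the change of variable and the reassurance that the pairing is well-defined on the appropriate dense subspace, which is handled by approximating by finitely supported functions.
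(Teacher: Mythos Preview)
Your proof is correct and follows the standard route. Note, however, that the paper does not actually prove this lemma: it is stated as a collection of ``well-known properties'' of the Markov operator and used without proof in the subsequent arguments. Your write-up therefore supplies details the paper deliberately omits; the arguments you give (Jensen for item~(1), the $\ell^2\le\ell^1$ embedding for item~(2), testing on $\delta_{y_0}$ for the sharpness in item~(3), and the change of variables $h\mapsto h^{-1}$ for the adjoint in item~(4)) are exactly the expected ones.
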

\begin{proof}
	Items \eqref{item: pp lem LpLq}, \eqref{item: 12 lem LpLq} and \eqref{item:  1infty lem LpLq} can be checked by direct computation. For Item \eqref{item: adjoint lem LpLq}, we first note that with $p,p^{\prime},q,q^{\prime}$ as in the statement it holds that $\ell^{p^{\prime}}(G)$ is the dual of $\ell^p(G)$ and that $\ell^{q^{\prime}}(G)$ is the dual of $\ell^q(G)$. In addition, for any $f_1\in \ell^p(G)$ and $f_2\in \ell^q(G)$ we have
	\begin{align*}
	\langle T_\mu f_1,f_2\rangle &=\sum_{g\in G}\sum_{h\in G} f_1(gh)\mu(h)f_2(g)\\
	&=\sum_{h\in G}\sum_{g\in G}f_1(gh)\mu(h)f_2(g)\\
	&=\sum_{h\in G}\sum_{g\in G} f_1(g)\mu(h)f_2(gh^{-1})\\
	&=\sum_{g\in G}\sum_{h\in G} f_1(g)\check{\mu}(h)f_2(gh)\\
	&=\langle f_1,T_{\check{\mu}}f_2\rangle.
	\end{align*}
	This shows that $T_{\check{\mu}}$ is the adjoint operator of $T_{\mu}$. The last equality in the statement of Item \eqref{item: adjoint lem LpLq} follows from the general fact that the an operator between Banach spaces has the same norm as its adjoint.
\end{proof}

The following proposition is proved in \cite[Proposition IV.4]{Coulhon1996} (see also \cite[Section II]{CoulhonSaloffCoste1990}), without keeping track of what is the precise value of the constant $C$. The proof that we present here is a self-contained recollection from the results of \cite[Section IV]{Coulhon1996}, with minor modifications to control the value of $C$ that appears in the statement of the proposition.
\begin{prop}\label{prop: universal comparison constants} Let $\mu_1$ and $\mu_2$ be probability measures on a countable group $G$. Suppose that 
	\begin{enumerate}
		\item $\mu_1$ is symmetric,
		\item there is $C_1>1$ such that $\mu_1(g)\le C_1\mu_2(g)$ for every $g\in G$, and
		\item there exist $C_2>0$ and $d\ge 1$ such that 
		\[
		\sup_{x,y \in G} p_{\mu_1}^{(n)}(x,y)\le C_2 n^{-d/2}, \text{ for all }n\ge 1.
		\]
	\end{enumerate}
	Then there exists $C>0$ such that
	\begin{equation*}
		\sup_{x,y \in G} p_{\mu_2}^{(n)}(x,y) \le C n^{-d/2}, \text{ for all }n\ge 1.
	\end{equation*}
	
	The constant $C$ is completely determined by the values of $C_1$, $C_2$ and $d$, and does not depend on $\mu_2$ in any additional manner. 
\end{prop}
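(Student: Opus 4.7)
The plan is to exploit the Varopoulos--Coulhon equivalence between uniform on-diagonal decay of convolution powers of order $n^{-d/2}$ and Nash-type functional inequalities for the associated Dirichlet form, combined with the observation that pointwise domination of measures translates into a pointwise comparison of symmetric Dirichlet forms. Since every step of this chain admits an explicit quantitative version, one can track constants throughout and ensure that the final $C$ depends only on $C_1$, $C_2$ and $d$.

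The first step extracts a Nash inequality from the hypothesis on $\mu_1$. By Varopoulos' theorem (see \cite[Chapter II]{Coulhon1996}), the bound $\sup_{x,y} p_{\mu_1}^{(n)}(x,y) \le C_2 n^{-d/2}$ together with the symmetry of $\mu_1$ yields a constant $A_1 = A_1(C_2, d)$ such that, for every finitely supported $f: G \to \mathbb{R}$,
\[
\|f\|_2^{2+4/d} \;\le\; A_1 \, \mathcal{E}_{\mu_1}(f,f) \, \|f\|_1^{4/d}, \qquad \mathcal{E}_{\mu_1}(f,f) = \tfrac{1}{2}\sum_{x,y \in G}(f(x)-f(y))^2\,\mu_1(x^{-1}y).
\]
The second step transfers this inequality by symmetrization. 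Set $\lambda_2 := \tfrac{1}{2}(\mu_2 + \check{\mu}_2)$, which is symmetric. Since $\mu_1$ is symmetric, the hypothesis $\mu_1 \le C_1 \mu_2$ also forces $\mu_1 \le C_1 \check{\mu}_2$, hence $\mu_1 \le C_1 \lambda_2$ pointwise. The sum-of-squares formula for Dirichlet forms of symmetric measures then yields the monotonicity $\mathcal{E}_{\lambda_2}(f,f) \ge C_1^{-1}\mathcal{E}_{\mu_1}(f,f)$, so that $\lambda_2$ satisfies a Nash inequality with constant $C_1 A_1$.

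The third step, which is where the main work lies, converts the Nash inequality for the symmetric measure $\lambda_2$ into ultracontractivity of the possibly non-self-adjoint operator $T_{\mu_2}$. The crucial observation is that $\mathcal{E}_{\lambda_2}(f,f) = \mathrm{Re}\langle (I-T_{\mu_2})f, f\rangle$, so the Nash inequality controls precisely the symmetric part of the Dirichlet-type form of $T_{\mu_2}$. Following \cite[Proposition IV.4]{Coulhon1996}, one sets up a discrete differential inequality for $\phi_n := \|T_{\mu_2}^n f\|_2^2$ via the elementary identity
\[
\|g\|_2^2 - \|T_{\mu_2} g\|_2^2 \;=\; 2\,\mathcal{E}_{\lambda_2}(g,g) - \|(I-T_{\mu_2})g\|_2^2,
\]
applied to $g = T_{\mu_2}^n f$. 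Combined with the Nash inequality (noting that $T_{\mu_2}$ is an $\ell^1$-contraction), this recursion forces $\phi_n \le C' n^{-d/2}$, i.e.\ $\|T_{\mu_2}^n\|_{1 \to 2} \le C'' n^{-d/4}$, with constants depending only on $C_1 A_1$ and $d$. The same argument applied to $\check{\mu}_2$ (for which $\mu_1 \le C_1 \check{\mu}_2$ plays the same role) gives the analogous bound on $\|T_{\check{\mu}_2}^n\|_{1\to 2}$, and then factorization together with duality,
\[
\|T_{\mu_2}^{2n}\|_{1 \to \infty} \;\le\; \|T_{\mu_2}^n\|_{1\to 2}\,\|T_{\mu_2}^n\|_{2\to\infty} \;=\; \|T_{\mu_2}^n\|_{1\to 2}\,\|T_{\check{\mu}_2}^n\|_{1 \to 2},
\]
delivers $\sup_{x,y} p_{\mu_2}^{(n)}(x,y) \le C n^{-d/2}$ with $C = C(C_1, C_2, d)$. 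The main obstacle is the control of the error term $\|(I - T_{\mu_2})g\|_2^2$ appearing in the above identity: the recursion must be closed with effective constants that do not leak additional dependence on $\mu_2$, which is exactly the content of Coulhon's non-symmetric argument that one must reproduce with care.
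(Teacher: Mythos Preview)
Your overall architecture---derive an $\ell^1\to\ell^2$ decay for $T_{\mu_2}^n$, repeat for $\check{\mu}_2$, then combine via duality and the factorization $\|T_{\mu_2}^{2n}\|_{1\to\infty}\le\|T_{\mu_2}^n\|_{1\to 2}\|T_{\check{\mu}_2}^n\|_{1\to 2}$---is exactly what the paper does. The problem is your step~3, the mechanism for obtaining the $\ell^1\to\ell^2$ bound. You propose to run the recursion through the identity
\[
\|g\|_2^2-\|T_{\mu_2}g\|_2^2=2\mathcal{E}_{\lambda_2}(g,g)-\|(I-T_{\mu_2})g\|_2^2,
\]
feeding in the Nash inequality for the symmetrized form $\mathcal{E}_{\lambda_2}$. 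The negative term $\|(I-T_{\mu_2})g\|_2^2$ is the obstacle you yourself identify, and it is real: for small $\|g\|_2^2$ the Nash lower bound $c\|g\|_2^{2+4/d}$ is dominated by the crude estimate $\|(I-T_{\mu_2})g\|_2^2\le 4\|g\|_2^2$, so the recursion cannot close on its own. You then defer to \cite[Proposition~IV.4]{Coulhon1996} for the fix, but Coulhon's argument (as the paper reproduces it) does \emph{not} go through this identity or through Nash inequalities at all.

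What the paper actually does is: first, use only the symmetry of $\mu_1$ to obtain, after a case analysis, a bound of the form $\|f\|_2^2-\|T_1f\|_2^2\ge c\bigl(\|T_2f\|_2^2\bigr)^{1+2/d}$ for $\|f\|_1\le 1$ (note $T_2$, not $T_1$, on the right, using only that $\|T_2\|_{2\to 2}\le 1$). Then the domination $\mu_1\le C_1\mu_2$ is used \emph{not} to compare Dirichlet forms, but to write $\mu_2=C_1^{-1}\mu_1+(1-C_1^{-1})\mu_3$ for a probability measure $\mu_3$; Jensen's inequality and $\ell^2$-contractivity of $T_{\mu_3}$ give $\|T_2f\|_2^2\le C_1^{-1}\|T_1f\|_2^2+(1-C_1^{-1})\|f\|_2^2$, i.e.\ $\|f\|_2^2-\|T_1f\|_2^2\le C_1(\|f\|_2^2-\|T_2f\|_2^2)$. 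Combining these two steps yields directly $\|f\|_2^2-\|T_2f\|_2^2\ge\xi(\|T_2f\|_2^2)$ with $\xi(x)=c'x^{1+2/d}$, and the iteration proceeds cleanly with no error term. So the key device you are missing is the convex-combination transfer, which replaces your Dirichlet-form comparison and avoids the problematic $\|(I-T_{\mu_2})g\|_2^2$ altogether.
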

\begin{proof}
	Let $\mu_1$ and $\mu_2$ be probability measures on $G$ and $C_1,C_2>0$ satisfying the hypotheses from the proposition. We will write $T_i\coloneqq T_{\mu_i}$, for $i=1,2$.
	
	Let us consider the function $m:[1,\infty[\to ]0,C_2]$ defined by $m(x)\coloneqq C_2x^{-d/2}$ for each $x\ge 1$. Using Item \eqref{item:  1infty lem LpLq} from Lemma \ref{lem: Lp Markov operator basic facts}, the second hypothesis of the proposition can be expressed as \[
	\|T_1^n\|_{1\to \infty} =	\sup_{x,y \in G} p_{\mu_1}^{(n)}(x,y)\le m(n) \text{ for all }n\ge 1. 	\]
	In addition, using Item \eqref{item: pp lem LpLq} from Lemma \ref{lem: Lp Markov operator basic facts}, we get that for each $f\in \ell^1(G)$ we have
	\begin{align*}
		\|T_1^n f\|_{2}^2&=\sum_{x\in G}\left| T_1^{n}f (x) \right|^2\\
		&\le \sum_{x\in G}\left| T_1^{n}f (x) \right| \|T_1^{n}\|_{1\to \infty}\|f\|_1\\
		&=\|T_1^{n}f\|_1 \|T_1^{n}\|_{1\to \infty}\|f\|_1 \le \|T_1^{n}\|_{1\to \infty}\|f\|^2_1.
	\end{align*}
	
	Hence, we obtain \begin{equation}\label{eq: explicit constant step1}
		\|T_1^n\|^2_{1\to 2}\le \|T_1^{n}\|_{1\to \infty}\le m(n), \text{ for all }n\ge 1.
	\end{equation}
	As it is shown in the proof of \cite[Proposition IV.2]{Coulhon1996}, from Equation \eqref{eq: explicit constant step1} and the assumption that $\mu_1$ is symmetric it follows that for each $f\in \ell^1(G)\cap \ell^2(G) \backslash\{0\}$ with $\|f\|_1\le 1$, we have
	\begin{equation}\label{eq: explicit constant step2}
		\|f\|_2^2-\|T_1f\|_2^2\ge \frac{\|T_1f\|_2^2}{n}\log\left(\frac{\|f\|_2^2}{m(n)}\right), \text{ for all }n\ge 1.
	\end{equation}
	
	Before proceeding, we will need some auxiliary facts.  Define $$\theta(x)\coloneqq -m^{\prime}\circ m^{-1}(x)=\frac{d}{2}C_2^{-\frac{2}{d}}x^{1+\frac{2}{d}},\text{ and}$$ 
	\[\widetilde{\theta}(x)\coloneqq \sup_{n\ge 1}\frac{x}{n}\log\left( \frac{x}{m(n)} \right)=\sup_{n\ge 1}\frac{x}{n}\log\left( \frac{xn^{\frac{d}{2}}}{C_2} \right).\] 
	Then, it holds that
	\begin{equation}\label{eq: explicit constant step3part1}
		\widetilde{\theta}(x)\ge \frac{\log(2)}{3} \theta(x), \text{ for each }x\in ]0,C_2].
	\end{equation}
	
	Indeed, consider any $x\in ]0,C_2]$. Let $n_x\coloneqq \left\lceil 2 C_2^{\frac{2}{d}} x^{-\frac{2}{d}}\right\rceil\in \Z_{+}$, where $\lceil y \rceil$ denotes the smallest integer larger or equal than $y$. Then we have
	\begin{align*}
		\widetilde{\theta}(x)&\ge \frac{x}{n_x}\log\left( \frac{xn_x^{\frac{d}{2}}}{C_2 } \right)\\
		&= \frac{x}{\left\lceil 2 C_2^{\frac{2}{d}} x^{-\frac{2}{d}}\right\rceil}\log\left( \frac{ x\left(\left\lceil 2 C_2^{\frac{2}{d}} x^{-\frac{2}{d}}\right\rceil\right)^{\frac{d}{2}}}{C_2} \right) \\	
		&\ge \frac{x}{ 2 C_2^{\frac{2}{d}} x^{-\frac{2}{d}}+1}\log\left( \frac{ x\left( 2 C_2^{\frac{2}{d}} x^{-\frac{2}{d}}\right)^{\frac{d}{2}}}{C_2} \right) \\
		&= \frac{x^{1+\frac{d}{2}}}{ 2 C_2^{\frac{2}{d}}+ x^{\frac{2}{d}}}\frac{d}{2}\log(2)=\frac{C_2^{\frac{2}{d}} }{ 2 C_2^{\frac{2}{d}}+ x^{\frac{2}{d}}}\log(2)\theta(x)\ge \frac{\log(2)}{3}\theta(x).
	\end{align*}
	Additionally, note that
	\begin{equation}\label{eq: explicit constant step3part2}
		\widetilde{\theta}(x) =\sup_{n\ge 1}\frac{x}{n}\log\left( \frac{x}{C_2n^{-d/2}} \right)\le  \frac{x}{2}\widetilde{\theta}(2), \text{ for each }x\in ]0,2].
	\end{equation}
	Let $f\in \ell^1(G)\cap \ell^2(G) \backslash\{0\}$ with $\|f\|_1\le 1$, and recall that Equation \eqref{eq: explicit constant step2} is satisfied. Let us consider the following three cases, which together prove that Equation \eqref{eq: conclusion step 4} below holds.
	\\
	\textbf{Case 1.} Suppose that $\|T_1f\|_2^2\ge \frac{1}{2}\|f\|_2^{2}.$ Recall from Item \eqref{item: pp lem LpLq} from Lemma \ref{lem: Lp Markov operator basic facts} that  $\|T_2\|_{2\to 2}\le 1$. Using this in Equation \eqref{eq: explicit constant step2}, together with the fact that $\widetilde{\theta}$ is non-decreasing, we have that
	\begin{equation}\label{eq: explicit constant step4 case1}
		\|f\|_2^2-\|T_1f\|_2^2\ge \frac{1}{2}\widetilde{\theta}\left( \|f\|_2^2 \right)\ge \frac{1}{2}\widetilde{\theta} \left(\|T_2f\|_2^2 \right).
	\end{equation}
	\\
	\textbf{Case 2.} Suppose that $\|f\|_2^{2}-\|T_2f\|_2^2\ge 1.$ Since we are assuming that $\|f\|_1\le 1$, we obtain
	\begin{equation*}
		\frac{1}{2}\widetilde{\theta} \left( \|T_2f\|_2^2 \right)\le	\widetilde{\theta}\left( \|T_2f\|_2^2 \right)\le 	\widetilde{\theta}\left( \|T_2\|_{1\to 2}^2 \right)\le 	\widetilde{\theta}\left( \|T_2\|_{1\to 2}^2 \right)\left(\|f\|_2^{2}-\|T_1f\|_2^2\right).
	\end{equation*}
	
	Item \eqref{item: 12 lem LpLq} from Lemma \ref{lem: Lp Markov operator basic facts} says that $\|T_2\|_{1\to 2}\le 1$, so that we obtain
	\begin{equation} \label{eq: explicit constant step4 case2}
		\|f\|_2^{2}-\|T_1f\|_2^2\ge 
		\frac{1}{2\widetilde{\theta}\left(1\right)}\widetilde{\theta} \left( \|T_2f\|_2^2 \right).
	\end{equation}
	\\
	\textbf{Case 3.} Finally, suppose that $\|T_1f\|_2^2\le \frac{1}{2}\|f\|_2^{2}$ and $\|f\|_2^{2}-\|T_1f\|_2^2\le 1.$ Then we have $\frac{1}{2}\|f\|_2^2\le \|f\|_2^{2}-\|T_1f\|_2^2\le 1 $, so that $\|f\|_2^2\le 2$. Hence, we can use Equation \eqref{eq: explicit constant step3part2} to obtain
	\begin{equation*}
		\frac{1}{2}\widetilde{\theta} \left(\|T_2f\|_2^2 \right)\le \frac{1}{2}  \widetilde{\theta} \left( \|f\|_2^2 \right)\le   \frac{\widetilde{\theta}(2)}{4} \|f\|_2^2\le   \frac{\widetilde{\theta}(2)}{2} \left(\|f\|_2^2-\|T_1f\|_2^2 \right).
	\end{equation*}
	
	We get from this that
	\begin{equation} \label{eq: explicit constant step4 case3}
		\|f\|_2^{2}-\|T_1f\|_2^2\ge 
		\frac{1}{\widetilde{\theta}(2)}\widetilde{\theta} \left(\|T_2f\|_2^2 \right).
	\end{equation}

	Now let us consider 
	\[
	C_3=\min\left\{  \frac{1}{2},	\frac{1}{2\widetilde{\theta}\left(1\right)},\frac{1}{\widetilde{\theta}(2)} \right\}.
	\]
	
	It follows from Equations \eqref{eq: explicit constant step4 case1}, \eqref{eq: explicit constant step4 case2} and \eqref{eq: explicit constant step4 case3} that for all $f\in \ell^1(G)\cap \ell^2(G)$ with $\|f\|_1\le 1$ we have 
	\begin{equation*}
		\|f\|_2^{2}-\|T_1f\|_2^2\ge 
		C_3\widetilde{\theta} \left(\|T_2f\|_2^2 \right).
	\end{equation*}

	Recall that $\|f\|_1\le 1$ guarantees that $ \|T_2f\|_2^2 \le 1$. By using Equation \eqref{eq: explicit constant step3part1}, we get

	\begin{equation}\label{eq: conclusion step 4}
		\|f\|_2^2-\|T_1f\|_2^2\ge  \frac{C_3\log(2)}{3}\theta\left( \|T_2f\|_2^2 \right)=\frac{dC_3C_2^{-\frac{2}{d}}\log(2)}{6} \left( \|T_2f\|_2^2 \right)^{1+\frac{2}{d}},
	\end{equation}
	for all $f\in \ell^1(G)\cap \ell^2(G)$ with $\|f\|_1\le 1$.

	We have as a hypothesis that there is $C_1>1$ such that $\mu_1(g)\le C_1\mu_2(g)$ for every $g\in G$. Then, we can define the probability measure $\mu_3(g)\coloneqq \left(1-C_1^{-1}\right)^{-1}\left(\mu_2(g)-C_1^{-1}\mu_1(g)\right)$ for each $g\in G$. Hence, we have that $\mu_2=C_1^{-1}\mu_1+(1-C_1^{-1})\mu_3$. From this, we obtain
	\begin{align*}
		\|T_2f\|_2^{2}&= \| C_1^{-1}T_1f+(1-C_1^{-1})T_{\mu_3}f\|_2^2\\
		&\le \left( C_1^{-1}\|T_1f\|_2+(1-C_1^{-1})\|T_{\mu_3}f\|_2 \right)^2\\
		&\le  C_1^{-1}\|T_1f\|^2_2+(1-C_1^{-1})\|T_{\mu_3}f\|^2_2\\
		&\le C_1^{-1}\|T_1f\|^2_2+(1-C_1^{-1})\|f\|_2^2,
	\end{align*}
	where we used Jensen's inequality and Item \eqref{item: pp lem LpLq} from Lemma \ref{lem: Lp Markov operator basic facts}.
	Hence, we get		
	\[
	\|f\|_2^2-\|T_1f\|_2^2\le C_1\left( 	\|f\|_2^2-\|T_2f\|_2^2\right).
	\]
	
	Thus, we obtain from Equation \eqref{eq: conclusion step 4} that for each $f\in \ell^1(G)\cap \ell^2(G)$ with $\|f\|_1\le 1$,
	\begin{equation}\label{eq: explicit constant step 5}
		\|f\|_2^2-\|T_2f\|_2^2\ge\xi\left(\|T_2f\|_2^2 \right),
	\end{equation}
	where $\xi(x)\coloneqq \frac{dC_3C_2^{-\frac{2}{d}}\log(2)}{6C_1}x^{1+\frac{2}{d}}$. 

	Let us define $\widetilde{m}_1=1\ge \|T_2\|_{1\to 2}^2$, and for each $n\ge 1$ define $\widetilde{m}_{n+1}$ by the equation $\xi\left( \widetilde{m}_{n+1} \right)=\widetilde{m}_n- \widetilde{m}_{n+1}$. The latter is well-defined since the function $x\mapsto \xi(x)+x$ is increasing. Following the proof of \cite[Proposition IV.1]{Coulhon1996}, we have
	\[
	\|T_2^n\|_{1\to 2}^2\le \widetilde{m}_n\text{ for every }n\ge 1.
	\]
	Indeed, this holds for $n=1$. Suppose that it does not hold for some $n\ge 2$ and choose a minimal such $n$, so that $\|T_2^{(n-1)}\|_{1\to 2}^2\le \widetilde{m}_{n-1}$ but $\|T_2^n\|_{1\to 2}^2> \widetilde{m}_n$. Since $\xi$ is strictly increasing, we would have $\xi\left(\|T_2^n\|_{1\to 2}^2\right)> \xi\left(\widetilde{m}_n\right)$ and hence, for every $f\in \ell^1(G)\cap \ell^2(G)$ with $\|f\|_1=1$, it would follow that
	\[
	\xi\left(\|T_2^n f\|_2^2\right)\le 	\|T_2^{(n-1)}f\|_2^2-\|T_2^nf\|_{1\to 2}^2<\widetilde{m}_{n-1}-\widetilde{m}_{n}=\xi\left(\widetilde{m}_{n}\right)\le \xi\left(\|T_2^n f\|_2^2\right),
	\]
	which is a contradiction.

	Note that everything that we have done so far is also valid for the probability measure $\check{\mu}_2$. Hence, we also have that 
	\[
	\|T_{\check{\mu}_2}^n\|_{1\to 2}^2\le \widetilde{m}_n\text{ for every }n\ge 1.
	\]
	Using Item \eqref{item: adjoint lem LpLq} from Lemma \ref{lem: Lp Markov operator basic facts} we see that $\|T_2^n\|_{2\to \infty}	=\|T_{\check{\mu}_2}^n\|_{1\to 2}$, and hence we conclude that
	\[ \|T_2^{2n}\|_{1\to \infty}\le \|T_2^n\|_{1\to 2}\|T_2^n\|_{2\to \infty}\le \widetilde{m}_n.  \]
	
	Note that $\xi$ and the sequence $\{\tilde{m}_n\}_{n\ge 1}$ are defined only in terms of the constants $C_1$ and $C_2$. Recall that at the beginning of the proof we defined  $m(x)\coloneqq C_2x^{-d/2}$. Next, by following the proof of  \cite[Theorem IV.3]{Coulhon1996}, we see that there is $K\ge 1$ such that $\widetilde{m}_{Kn}\le m(n)$, for all $n\ge 1$. Indeed, the sequence  $\{\tilde{m}_n\}_{n\ge 1}$ is decreasing and converges to $0$ for $n\to \infty$. Therefore, we can choose $K\ge 1$ sufficiently large such that 
	\[
	\tilde{m}_K\le m(1)=C_2, \text{ and }K>\frac{3C_1}{C_3\log(2)2^{1+d/2}}.
	\]
	Thus,
	\begin{equation}\label{eq: sum 1}
		\widetilde{m}_{Kn}-\widetilde{m}_{K(n+1)}=\sum_{j=0}^{K-1}\widetilde{m}_{Kn+j}-\widetilde{m}_{Kn+j+1}=\sum_{j=0}^{K-1}\xi\left(\widetilde{m}_{Kn+j+1}\right)\ge K\xi(\widetilde{m}_{K(n+1)}).
	\end{equation}
	We already have that $\tilde{m}_K\le m(1)$, so let us suppose that there is some $n\ge 2$ such that $\tilde{m}_{Kn}> m(n)$. Choose a minimal such $n$, so that we have $\tilde{m}_{K(n-1)}\le m(n-1)$. Then, using Equation \eqref{eq: sum 1} we have
	\begin{equation}\label{eq: sum 2}
		\xi(\tilde{m}_{Kn})\le \frac{1}{K}\left(\widetilde{m}_{K(n-1)}-\widetilde{m}_{Kn}\right)\le \frac{1}{K}\left( m(n-1)-m(n) \right).
	\end{equation}
	
	In addition, we can find $x^{*}\in [0,1]$ such that $m(n)-m(n-1)=m^{\prime}(n-1+x^{*})$ by using the mean value theorem. Hence, from Equation \eqref{eq: sum 2} and thanks to our choice of $K$, we obtain 
	
	\begin{align*}
		\xi(\tilde{m}_{Kn})&\le -\frac{1}{K}m^{\prime}(n-1+x^{*})	\\
		&=\frac{dC_2}{2K}(n-1+x^{*})^{-1-d/2}\\
		&\le \frac{dC_2 C_3\log(2)}{6C_1 2^{1+d/2}}(n-1)^{-1-d/2}\\
		&\le \frac{dC_2 C_3\log(2)}{6C_1 2^{1+d/2}}n^{-1-d/2} \left(\frac{n}{n-1}\right)^{1+d/2}\\
		&\le \frac{dC_2 C_3\log(2)}{6C_1}n^{-1-d/2}\\
		&=\frac{dC_3C_2^{-2/d}\log(2)}{6C_1}\left(C_2 n^{-d/2}\right)^{1+2/d}=\xi\left(m(n)\right).
	\end{align*}
	This is a contradiction, since we had supposed that $\tilde{m}_{Kn}> m(n)$, and $\xi$ is increasing. Hence, we conclude that $\tilde{m}_{Kn}\le m(n)$ for every $n\ge 1$. Note that the constant $K$ depends only on the values of $C_1$ and $C_2$. Thus, we have that \[ \|T_2^{2Kn}\|_{1\to \infty}\le m(n)=C_2n^{-d/2}, \text{ for all } n\ge 1.  \]
	From this, we see that for every $x,y\in G$, any $n\ge 1$ and every $j=0,1,\ldots,\lceil 2K\rceil$, we have
	
	\begin{align*}
		p_{\mu_2}^{(2Kn+j)}(x,y)	&=\sum_{z\in G}p_{\mu_2}^{(2Kn)}(x,z)p_{\mu_2}^{(j)}(z,y)\\
		&\le C_2n^{-d/2}\sum_{z\in G}\mu_2^{*j}(z^{-1}y)=C_2n^{-d/2}.
	\end{align*}
	Let us consider $C=C_2\left( 4K+1\right)^{d/2}$. Then, for any $n\ge 1$ and every $j=0,1,\ldots,\lceil 2K\rceil$, we obtain
	\begin{align*}
		C_2n^{-d/2}&=C_2\left(2K+\frac{j}{n}\right)^{d/2}(2Kn+j)^{-d/2}\\
		&\le C_2\left(2K+\frac{2K+1}{1}\right)^{d/2}(2Kn+j)^{-d/2}\\
		&\le C_2\left(4K+1\right)^{d/2}(2Kn+j)^{-d/2}=C(2Kn+j)^{-d/2}.
	\end{align*}
	
	We finally conclude that 
	\[
	\sup_{x,y \in G} p_{\mu_2}^{(n)}(x,y) \le C n^{-d/2}, \text{ for all }n\ge 1,	
	\]
	where $C>0$ is a constant that is completely determined by the values of $C_1$ and $C_2$ from the statement of the theorem.
\end{proof}

\begin{lem}\label{lem: universal constant return proba}
	Let $\mu$ be a probability measure on a countable group $G$. Suppose that there is $N\ge 1$ such that the support of the $N$-th convolution power $\mu^{*N}$ contains a finite symmetric subset that generates a group $H$ whose growth function satisfies $v_H(n)\succcurlyeq n^d$ for some $d\ge1$. Consider a sequence $\{\mu_k\}_{k\ge 1}$ of probability measures on $G$ such that $\lim_{k\to \infty}\mu_{k}(g)=\mu(g)$ for every $g\in G$. 

	Then there exist $C,K>0$ such that for all $n\ge 1$ we have
	\begin{equation*}
		\sup_{k\ge K} \sup_{x,y \in G} p_{\mu_k}^{(n)}(x,y) \le Cn^{-d/2} \text{ and } \sup_{x,y \in G} p_{\mu}^{(n)}(x,y) \le Cn^{-d/2}.
	\end{equation*}
\end{lem}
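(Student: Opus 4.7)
The plan is to reduce the claim to Proposition \ref{prop: universal comparison constants} by constructing a single symmetric auxiliary measure whose convolution powers decay at rate $n^{-d/2}$ and which is uniformly dominated by $\mu^{*N}$ and by $\mu_k^{*N}$ for all $k$ sufficiently large. The crucial point is that the constant in Proposition \ref{prop: universal comparison constants} depends only on the comparison constant $C_1$, the decay constant $C_2$, and the exponent $d$, so once these are controlled uniformly in $k$, the desired bound follows automatically.

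By hypothesis, $\supp{\mu^{*N}}$ contains a finite symmetric subset $S$ generating a subgroup $H \leq G$ with $v_H(n) \succcurlyeq n^d$. I would take $\mu_1$ to be the uniform probability measure on $S$. Then $\mu_1$ is symmetric and supported on $H$, and Varopoulos's theorem \cite{Varopoulos1986} for symmetric finitely supported random walks on groups of at least polynomial growth $n^d$ furnishes a constant $C_2 > 0$ with $\sup_{x,y \in G} p^{(n)}_{\mu_1}(x,y) \leq C_2 n^{-d/2}$ for all $n \geq 1$ (the supremum is attained inside $H$ since $\mu_1^{*n}$ is supported there). To obtain a comparison with $\mu^{*N}$, I set $\delta \coloneqq \min_{s \in S} \mu^{*N}(s) > 0$, which yields $\mu_1(g) \leq (|S|\delta)^{-1} \mu^{*N}(g)$ for every $g \in G$. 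By Lemma \ref{lem: convolutions continuity}, $\mu_k^{*N}(s) \to \mu^{*N}(s)$ for each $s \in S$, and since $S$ is finite I can fix $K \geq 1$ so that $\mu_k^{*N}(s) \geq \delta/2$ whenever $k \geq K$ and $s \in S$. Hence $\mu_1(g) \leq C_1 \mu_k^{*N}(g)$ holds with $C_1 \coloneqq 2/(|S|\delta)$ independent of $k$, and the same $C_1$ works for $\mu^{*N}$ in place of $\mu_k^{*N}$.

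Applying Proposition \ref{prop: universal comparison constants} to the pairs $(\mu_1, \mu^{*N})$ and $(\mu_1, \mu_k^{*N})$ produces a single constant $\widetilde{C}$, depending only on $C_1$, $C_2$, and $d$, such that
\[
\sup_{x,y \in G} \mu^{*nN}(x^{-1}y) \leq \widetilde{C} n^{-d/2} \quad \text{and} \quad \sup_{x,y \in G} \mu_k^{*nN}(x^{-1}y) \leq \widetilde{C} n^{-d/2}
\]
for every $n \geq 1$ and every $k \geq K$. To pass from multiples of $N$ to arbitrary indices, I would write $m = nN + r$ with $0 \leq r < N$ and use the decomposition $\mu_k^{*m} = \mu_k^{*nN} * \mu_k^{*r}$ to transfer the bound, at the cost of replacing $n^{-d/2}$ by a multiple of $m^{-d/2}$; this only enlarges $\widetilde{C}$ by a factor depending on $N$ and $d$, and the finitely many small values $m < 2N$ are absorbed by the trivial estimate $\mu_k^{*m}(g) \leq 1$.

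The only delicate point is ensuring uniformity of $C_1$ in $k$, which is exactly what Lemma \ref{lem: convolutions continuity} provides on the finite set $S$. Since Proposition \ref{prop: universal comparison constants} is explicit that its output constant depends on $\mu_2$ only through $C_1$ (and not through the support of $\mu_2$ or any finer quantitative feature), this is precisely the mechanism that makes the bound uniform across the sequence $\{\mu_k\}_{k \geq K}$.
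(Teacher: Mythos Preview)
Your proposal is correct and follows essentially the same approach as the paper's proof: both take the uniform measure on $S$ as the symmetric comparison measure, invoke Varopoulos's bound for its decay, use Lemma~\ref{lem: convolutions continuity} to obtain a uniform lower bound on $\mu_k^{*N}(s)$ for $s\in S$ and $k$ large, apply Proposition~\ref{prop: universal comparison constants} with the resulting uniform comparison constant, and then pass from multiples of $N$ to all $n$ by convolving with the remaining steps. The only cosmetic differences are in the naming of constants and the level of detail in the final interpolation step.
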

\begin{proof}
	Let $N\ge 1$ be such that there is a finite symmetric subset $S\subseteq \supp{\mu^{*N}}$ that generates a group of growth at least $n^d$.	Let us define $\nu_S$ to be the uniform probability measure on $S$. Next, consider $C_1\coloneqq |S|\min_{s\in S}\mu^{*N}(s)>0$, so that we have
	\begin{equation}
		\mu^{*N}(g)\ge  C_1 \nu_S(g)\text{ for all }g\in G.
	\end{equation}
	
	Let $\{\mu_k\}_{k\ge 1}$ be a sequence of probability measures on $G$ such that $\lim_{k\to \infty}\mu_{k}(g)=\mu(g)$ for every $g\in G$. Then, thanks to Lemma \ref{lem: convolutions continuity}, we also have $\lim_{k\to \infty}\mu^{*N}_{k}(g)=\mu^{N}(g)$ for every $g \in G$. This implies that we can find $K\ge 1$ such that for each $k\ge K$, we have $\mu^{*N}_k(s)\ge \frac{1}{2}\mu^{*N}(s)\ge \frac{C_1}{2}\nu_S(s)$ for each $s\in S$. Let $C_2\coloneqq \frac{C_1}{2}$, and recall that $\nu_S(g)=0 $ for $g\in G\backslash S$. Then we have
	\begin{equation}
		\mu^{*N}_k(g)\ge C_2\nu_S(g), \text{ for every }g\in G \text{ and every }k\ge K.
	\end{equation}
	Since the growth function of the group $H\coloneqq \langle \supp{\mu}\rangle$ satisfies $v_H(n)\succcurlyeq n^d$ and $\nu_S$ is symmetric and finitely supported, it is a classical result of Varopoulos \cite{Varopoulos1987} (see also \cite[Theorems VI.3.3 \& VI.5.1]{VaropoulosSaloffCosteCoulhon1992}) that there exists a constant $C_3>0$ such that
	\[
	\sup_{x,y\in G}p^{(n)}_{\nu_S}(x,y)\le C_3 n^{-d/2}.
	\]
	Now we can use Proposition \ref{prop: universal comparison constants} to conclude that there is a constant $C>0$ such that 
	\[
	\sup_{k\ge K} \sup_{x,y \in G} p_{\mu_k}^{(Nn)}(x,y) \le Cn^{-d/2} \text{ and } \sup_{x,y \in G} p_{\mu}^{(Nn)}(x,y) \le Cn^{-d/2}.
	\]
	Note that the same constant $C$ works for all probability measures $\mu_k$, $k\ge K$, as well as for $\mu$. This implies that, for a possibly different constant $\widetilde{C}>0$, we have
	\begin{equation}\label{eq: final inequalities}
		\sup_{k\ge K} \sup_{x,y \in G} p_{\mu_k}^{(n)}(x,y) \le \widetilde{C}n^{-d/2} \text{ and } \sup_{x,y \in G} p_{\mu}^{(n)}(x,y) \le \widetilde{C}n^{-d/2}.
	\end{equation}
	Indeed, for any $x,y\in G$, $k\ge K$ and $j=0,1,\ldots, N-1$ we have
	
	\begin{align*}
		p_{\mu_k}^{(Nn+j)}(x,y)&= \sum_{z\in G}p_{\mu_k}^{(Nn)}(x,z) p_{\mu_k}^{(j)}(y,z)\\
		&\le Cn^{-d/2}\sum_{z\in G} \mu_k^{*j}(y^{-1}z)\\
		&=Cn^{-d/2}\\
		&=C(nN+j)^{-d/2}\left( \frac{nN+j}{n} \right)^{d/2}\\
		&=C(nN+j)^{-d/2}\left( N+\frac{j}{n} \right)^{d/2}\\
		&\le C(2N)^{d/2} (nN+j)^{-d/2}.
	\end{align*}
	One obtains a completely analogous inequality for $\mu$, and hence we get Equation \eqref{eq: final inequalities} with $\widetilde{C}\coloneqq  C(2N)^{d/2}$. This finishes the proof of the lemma.
\end{proof}

\subsection{The proof of Theorem \ref{thm: continuity of range}}\label{subsection: proof of continuity of range}
Now we are ready to prove the main result of this section.
\begin{proof}[Proof of Theorem \ref{thm: continuity of range}]
	Let $\mu$ be a probability measure on $G$, and suppose that $\langle \supp{\mu}\rangle_{+}$ contains a finitely generated subgroup of at least cubic growth. Then, in particular, this means that, for some $N\ge 1$,  $\supp{\mu^{*N}}$ contains a symmetric finite subset that generates a group of at least cubic growth. Thanks to Lemma \ref{lem: universal constant return proba}, we can find $C,K>0$ such that for all $n\ge 1$, we have
	\begin{equation}\label{eq: upper bound n32}
		\mu_k^{*n}(e_G)\le C n^{-3/2}, \text{ for all }k\ge K\text{ and }\mu^{*n}(e_G)\le C n^{-3/2}.
	\end{equation}
	
	Choose an integer $M\ge 1$ such that 
	\begin{equation}\label{eq: tail sum}
		\sum_{n= M+1}^{\infty} n^{-3/2}<\frac{\varepsilon}{4C}.
	\end{equation}
	Choose $\widetilde{K}\ge K$ such that for every $k\ge \widetilde{K}$ and every $n=0,1,\ldots,M$ we have
	\begin{equation}\label{eq: convolutions up tp M are close}
		|\mu_k^{*n}(e_G)-\mu^{*n}(e_G)|\le \frac{\varepsilon}{2(M+1)}
	\end{equation}
	
	Then for each $k\ge 0$ we have
	\begin{align*}
		\left|  \sum_{n= 0}^{\infty}\mu_k^{*n}(e_G)- \sum_{n= 0}^{\infty}\mu^{*n}(e_G) \right|&\le 	\left|  \sum_{n= 0}^{\infty}\mu_k^{*n}(e_G)- \sum_{n=0}^M\mu_k^{*n}(e_G) \right|+\left|  \sum_{n= 0}^{M}\mu_k^{*n}(e_G)- \sum_{n= 0}^{M}\mu^{*n}(e_G) \right|+\\
		&\hspace{10pt}+ \left|  \sum_{n= 0}^M\mu^{*n}(e_G)- \sum_{n= 0}^{\infty}\mu^{*n}(e_G) \right|\\
		&\le 	 \sum_{n= M+1}^{\infty}\mu_k^{*n}(e_G)+ \sum_{n= 0}^{M}\left|\mu_k^{*n}(e_G)- \mu^{*n}(e_G)\right| + \sum_{n= M+1}^{\infty}\mu^{*n}(e_G).
	\end{align*}
	Next, using Equations \eqref{eq: upper bound n32}, \eqref{eq: tail sum} and \eqref{eq: convolutions up tp M are close}, we get that for every $k\ge \widetilde{K}$ we have
	\begin{align*}
		\left|  \sum_{n= 0}^{\infty}\mu_k^{*n}(e_G)- \sum_{n= 0}^{\infty}\mu^{*n}(e_G) \right|&\le 2 C \sum_{n= M+1}^{\infty} n^{-3/2} + \sum_{n= 0}^{M} \frac{\varepsilon}{2(M+1)}\\
		&\le 2 C\frac{\varepsilon}{4C}+\frac{\varepsilon}{2}=\varepsilon.
	\end{align*}	
	From this, we conclude that \[\lim_{k\to \infty} \sum_{n= 0}^{\infty}\mu_k^{*n}(e_G)= \sum_{n= 0}^{\infty}\mu^{*n}(e_G).\]
	Finally, thanks to Lemma \ref{lem: expected number of visits is 1 over 1-return prob}, we obtain $\lim_{k\to \infty}\pesc{\mu_k}=\pesc{\mu}$, which is what we wanted to prove.
\end{proof}

\begin{rem}
	In the case of non-degenerate probability measures on $G=\Z^d$, $d\ge 3$, it is possible to provide an alternative proof of Theorem \ref{thm: continuity of range} that does not rely on Coulhon and Saloff-Coste's comparison lemma (Proposition \ref{prop: universal comparison constants}). The argument goes as follows. Let $\mu$ be a probability measure on $\Z^d$, $d\ge 1$, and consider the characteristic function $\phi_{\mu}:\Z^d\to \mathbb{C}$ of $\mu$, defined by $\phi_{\mu}(\mathbf{k})\coloneqq \E(e^{i\mathbf{k}\cdot X})$, $\mathbf{k}\in \Z^d$, where $X$ is a random variable distributed according to $\mu$. It is a result of Chung and Fuchs \cite[Theorem 3]{ChungFuchs1951} that
	\[ 
	\frac{1}{\pesc{\mu}}=\sum_{n= 0}^{\infty}\mu^{*n}(e_G)=\lim_{t\to 1^{-}}\frac{1}{(2\pi)^d}\int_{[-\pi,\pi]^d}\frac{1}{\re{1-t\phi_{\mu}(\mathbf{k})}}\ d \mathbf{k};
	\]
	see also \cite[Proposition II.8.1]{Spitzer1976}. In \cite[Proposition II.7.5]{Spitzer1976} it is proved that, if $d\ge 3$, then there exists a constant $C>0$ such that for any $\mathbf{k}\in [-\pi,\pi]^d$, we have $\re{1-\phi_{\mu}(\mathbf{k})}\ge C\|k\|^2$. Therefore, for each $\mathbf{k}\in [-\pi,\pi]^d$ and $t\in [0,1]$, we have
	\begin{equation}\label{eq: uniform decay Zd}
		\frac{1}{\re{1-t\phi_{\mu}(\mathbf{k})}}\le 1+\frac{1}{C\|k\|^2}.
	\end{equation}
	
	Since the function on the right side is integrable, the dominated convergence theorem implies that 
	\[
	\frac{1}{\pesc{\mu}}=\frac{1}{(2\pi)^d}\int_{[-\pi,\pi]^d}\frac{1}{\re{1-\phi_{\mu}(\mathbf{k})}}\ d \mathbf{k}<\infty.
	\]
	
	Now suppose that $\mu,\{\mu_k\}_{k\ge 1}$ are non-degenerate probability measures on $\Z^d$	such that $\mu_k(g)\xrightarrow[k\to \infty]{}\mu(g)$ for each $g\in \Z^d$. Then the characteristic functions $\{\phi_{\mu_k}\}_{k\ge 1}$ will converge pointwise to $\phi_{\mu}$. Furthermore, by following a similar argument to the one in the proof of \cite[Proposition II.7.5]{Spitzer1976}, one can find $K\ge1$ and a uniform constant $C>0$ such that Equation \eqref{eq: uniform decay Zd} holds simultaneously for all probability measures $\mu_k$, $k\ge K$. Then, another application of the dominated convergence theorem guarantees that  
	\[
	\frac{(2\pi)^d}{\pesc{\mu_k}}=\int_{[-\pi,\pi]^d}\frac{1}{\re{1-\phi_{\mu_k}(\mathbf{k})}}\ d \mathbf{k}\xrightarrow[k\to \infty]{}	\int_{[-\pi,\pi]^d}\frac{1}{\re{1-\phi_{\mu}(\mathbf{k})}}\ d \mathbf{k}=\frac{(2\pi)^d}{\pesc{\mu}},
	\]
	
	and hence we conclude that $\pesc{\mu_k}\xrightarrow[k\to \infty]{}\pesc{\mu}.$
\end{rem}
\section{Entropy estimates on wreath products}\label{section: entropy estimates}
In this, section we obtain entropy estimates for random walks on wreath products which will be used in the proof of Theorem \ref{thm: continuity asymptotic entropy wreath prods} in Section \ref{section: proof of the main theorem}, and hence subsequently in the proof of Theorem \ref{thm: main corollary continuity of asymptotic entropy over Zd}.

We will denote a sample path of a random walk on $A\wr B$ as
\[ 
w_n=(\varphi_n,X_n), \text{ where }\varphi_n:B\to A \text{ and }X_n\in B.
\]
Let us denote the independent increments of the random walk by $g_i=(f_i,Y_i)$, $i\ge 1$, so that

\[
(\varphi_n,X_n)=(\varphi_{n-1},X_{n-1})\cdot (f_n,Y_n), \text{ for each }n\ge 1.
\]

\subsection{The coarse trajectory}
We start by making a general definition of the $t_0$-coarse trajectory of a random walk on a group, for a fixed $t_0\ge 1$. Intuitively, it consists on recording the group element that is hit by the random walk every $t_0$ steps.
\begin{defn}\label{def: coarse trajectory} Let $\mu$ be a probability measure on a group $G$, and let $t_0\ge 1$. Recall that we denote by $\{w_n\}_{n\ge 0}$ a sample path of the random walk on $G$. We define the \emph{$t_0$-coarse trajectory at instant $n$ on the group $G$} as the ordered tuple
	\begin{equation*}
		\mathcal{P}_n^{t_0}(G)=\left(w_{t_0},w_{2t_0},\ldots, w_{\left\lfloor n/t_0\right \rfloor t_0}\right).
	\end{equation*}
	
	That is, $\mathcal{P}_n^{t_0}(G)$ consists of an ordered collection of the group element visited by the random walk, every $t_0$ steps, between time $0$ and $n$. 
\end{defn}

Throughout Sections \ref{section: entropy estimates}, \ref{section: entropy lamps inside} and \ref{section: proof of the main theorem} we will focus on random walks on wreath products $A\wr B$, and we will use the $t_0$-coarse trajectory for the induced random walk on the base group $B$. That is, if we denote by $\pi:A\wr B\to B$ the canonical projection, we will work with $\mathcal{P}_n^{t_0}(B)$ to be the $t_0$-coarse trajectory on $B$ defined by the probability measure $\pi_{*}\mu$. If there is no risk of confusion, we will just write $\mathcal{P}_n^{t_0}$ without making an explicit reference to the group $B$.

\begin{lem}\label{lem: coarse trajectory has small entropy} Let $\mu$ be a probability measure on a countable group $G$ with $H(\mu)<\infty$ and such that $h(\mu)=0$. Consider a sequence $\{\mu_k\}_{k\ge 1}$ of probability measures on $G$ such that
	\begin{itemize}
		\item  $H(\mu_k)<\infty$ for all $k\ge 1$,
		\item  $\lim_{k\to \infty}H(\mu_k)= H(\mu)$, and
		\item  $\lim_{k\to \infty}\mu_k(g)=\mu(g)$ for each $g\in G$.
	\end{itemize}
	Then for every $\varepsilon>0$, there exist $K,T\ge 1$ such that for all $k\ge K$ and $n\ge t_0\ge T$, we have $H_{\mu_k}(\mathcal{P}_n^{t_0})<\varepsilon n$.
\end{lem}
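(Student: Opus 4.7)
The plan is to first reduce the lemma to a statement about the normalized entropy $H(\mu_k^{*t_0})/t_0$, and then exploit the fact that $h(\mu)=0$ together with a subadditivity argument to obtain the required uniformity in both $k$ and $t_0$.

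\textbf{Step 1 (Chain rule).} Using the independence of the increments of the $\mu_k$-random walk and the Markov property, together with stationarity of the distribution of the increments between times $it_0$ and $(i+1)t_0$, I compute
\[
H_{\mu_k}(\mathcal{P}_n^{t_0}) \;=\; \sum_{i=1}^{\lfloor n/t_0\rfloor} H\bigl(w_{it_0}\mid w_{t_0},\dots,w_{(i-1)t_0}\bigr) \;=\; \lfloor n/t_0\rfloor\, H(\mu_k^{*t_0}).
\]
(This is a special case of the ``slices of independent increments have additive entropy'' lemma referenced later.) Consequently, since $\lfloor n/t_0\rfloor\le n/t_0$, it suffices to exhibit $K,T\ge 1$ such that $H(\mu_k^{*t_0})\le \tfrac{\varepsilon}{2}\,t_0$ for all $k\ge K$ and $t_0\ge T$.

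\textbf{Step 2 (Pick one large time $T_0$).} Since the sequence $\{H(\mu^{*n})\}_{n\ge 1}$ is subadditive, Fekete's lemma gives $\lim_{t_0\to\infty} H(\mu^{*t_0})/t_0 = \inf_{t_0\ge 1} H(\mu^{*t_0})/t_0 = h(\mu)=0$. Fix $T_0$ with $H(\mu^{*T_0})/T_0<\varepsilon/8$. By Lemma~\ref{lem: convolutions entropy convergence} applied at the fixed value $n=T_0$, we have $H(\mu_k^{*T_0})\to H(\mu^{*T_0})$ as $k\to\infty$, so there exists $K$ such that
\[
\frac{H(\mu_k^{*T_0})}{T_0} \;<\; \frac{\varepsilon}{4}\qquad\text{for every }k\ge K.
\]

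\textbf{Step 3 (Subadditivity for arbitrary $t_0\ge T$).} The assumption $H(\mu_k)\to H(\mu)<\infty$ implies that $M:=\sup_{k\ge 1}H(\mu_k)<\infty$. Writing $t_0=qT_0+r$ with $0\le r<T_0$ and using subadditivity of $\{H(\mu_k^{*n})\}_{n\ge 1}$,
\[
H(\mu_k^{*t_0})\;\le\;qH(\mu_k^{*T_0})+rH(\mu_k)\;\le\;qH(\mu_k^{*T_0})+T_0M.
\]
Dividing by $t_0$ and using $qT_0\le t_0$,
\[
\frac{H(\mu_k^{*t_0})}{t_0}\;\le\;\frac{H(\mu_k^{*T_0})}{T_0}+\frac{T_0M}{t_0}\;<\;\frac{\varepsilon}{4}+\frac{T_0M}{t_0}.
\]
Choosing $T\ge \max\{T_0,\,4T_0M/\varepsilon\}$ makes the second term at most $\varepsilon/4$, yielding $H(\mu_k^{*t_0})\le \tfrac{\varepsilon}{2}\,t_0$ for all $k\ge K$ and $t_0\ge T$. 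Combining with Step~1, $H_{\mu_k}(\mathcal{P}_n^{t_0})\le \tfrac{\varepsilon}{2}\,n<\varepsilon n$ whenever $n\ge t_0\ge T$ and $k\ge K$.

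The only subtle point is Step~3: one cannot directly invoke $H(\mu_k^{*t_0})/t_0\to h(\mu_k)$ together with the upper semicontinuity $\limsup_k h(\mu_k)\le 0$, because this would give no control on how large $t_0$ must be as a function of $k$. The subadditive bound collapses the two-parameter limit to a single-parameter convergence (at the fixed scale $T_0$), which is exactly the form provided by Lemma~\ref{lem: convolutions entropy convergence}, and this is what makes the argument uniform in $k$.
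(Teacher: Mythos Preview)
Your proof is correct and follows essentially the same route as the paper: reduce via the chain rule to bounding $H(\mu_k^{*t_0})/t_0$, then combine $h(\mu)=0$ with the convergence $H(\mu_k^{*n})\to H(\mu^{*n})$ from Lemma~\ref{lem: convolutions entropy convergence}. Your Step~3, however, is more careful than the paper on one point. The paper picks $T$ so that $H_\mu(w_{t_0})<\tfrac{\varepsilon}{2}t_0$ for all $t_0\ge T$ and then, for each such $t_0$, invokes the convergence lemma to obtain $K$ with $H_{\mu_k}(w_{t_0})<\varepsilon t_0$ for $k\ge K$; as written this $K$ may depend on $t_0$, which does not literally match the uniform statement of the lemma. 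Your subadditivity trick---anchor at one fixed scale $T_0$, then bootstrap to all larger $t_0$ using $H(\mu_k^{*t_0})\le qH(\mu_k^{*T_0})+T_0M$---is the standard way to secure a single $K$ independent of $t_0$, and it cleanly closes that gap.
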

\begin{proof}
	Let $\varepsilon>0$. Since $\lim_{n\to \infty}\frac{H(w_n)}{n}=h(\mu)=0$, we can find $T\ge 1$ such that for each $t_0\ge T$ we have $H_{\mu}(w_{t_0})<\frac{\varepsilon}{2} t_0$. We are assuming that $\lim_{k\to \infty}\mu_k(g)=\mu(g)$ for each $g\in G$, and that $\lim_{k\to \infty}H(\mu_k)= H(\mu)$, so that thanks to Lemma \ref{lem: convolutions entropy convergence} we have that $\lim_{k\to \infty}H_{\mu_k}(w_{t_0})=H_{\mu}(w_{t_0})$. This implies that there is $K\ge 1$ such that $H_{\mu_k}(w_{t_0})<\varepsilon t_0$ holds for every $k\ge K$. The rest of the proof is similar to the proof of \cite[Lemma 3.4]{FrischSilva2024}. We repeat the argument for the convenience of the reader.
	
	Let $k\ge K$, $t_0\ge T$, $n\ge t_0$ and denote $s=\lfloor n/t_0\rfloor.$ Using Item \eqref{item: entropy 1} of Lemma \ref{lem: basic properties entropy}, we have that 
	\[ H_{\mu_k}(w_{t_0},w_{2t_0},w_{3t_0},\ldots, w_{st_0})= H_{\mu_k}(w_{t_0})+ H_{\mu_k}(w_{2t_0},w_{3t_0},\ldots, w_{st_0}\mid w_{t_0}). \]
	
	In addition, using Items \eqref{item: entropy 1} and \eqref{item: entropy 3} of Lemma \ref{lem: basic properties entropy} we get
	\begin{align*}
		H_{\mu_k}(w_{2t_0},w_{3t_0},\ldots, w_{st_0}\mid w_{t_0})&= H_{\mu_k}(w_{2t_0}\mid w_{t_0}) + H_{\mu_k}(w_{3t_0},\ldots, w_{st_0}\mid w_{t_0},w_{2t_0})\\
		&\le H_{\mu_k}(w_{2t_0}\mid w_{t_0}) + H_{\mu_k}(w_{3t_0},\ldots, w_{st_0}\mid w_{2t_0}),
	\end{align*} 
	so that \[ H_{\mu_k}(w_{t_0},w_{2t_0},w_{3t_0},\ldots, w_{st_0})\le H_{\mu_k}(w_{t_0})+ H_{\mu_k}(w_{2t_0}\mid w_{t_0}) + H_{\mu_k}(w_{3t_0},\ldots, w_{st_0}\mid w_{2t_0}). \]
	
	By repeating this argument $s-1$ times, we obtain the inequality \[H_{\mu_k}(w_{t_0},w_{2t_0},\ldots, w_{st_0})\le\sum_{j=0}^{s-1}H_{\mu_k}\left(w_{(j+1)t_0}\mid w_{jt_0}\right).\] Finally, using the above together with the fact that $H_{\mu_k}\left(w_{(j+1)t_0}\mid w_{jt_0}\right)=H_{\mu_k}(w_{t_0})$ for every $j=0,\ldots,s-1$, we can conclude that
	\begin{equation*}
		H_{\mu_k}(w_{t_0},w_{2t_0},\ldots, w_{st_0})\le sH_{\mu_k}(w_{t_0})
		< s\varepsilon t_0\le \varepsilon n.
	\end{equation*}
\end{proof}

\begin{rem}
	The only part of the proof of Theorem \ref{thm: continuity asymptotic entropy wreath prods} where the hypothesis $h(\pi_{*}\mu)=0$ is used is when we will apply Lemma \ref{lem: coarse trajectory has small entropy} to the $\pi_{*}\mu$-random walk on the base group $B$.
\end{rem}
\subsection{The partition of bad increments}
We now partition the time interval between $0$ and $n$ into subintervals of length $t_0$. Afterward, we define what it means for these intervals to be \emph{good} or \emph{bad}, depending on whether the increments of the random walk during the time instants belonging to the interval belong to a given fixed finite subset or not.
\begin{defn}\label{def: time intervals}
	Consider $t_0\ge 1$ and $n\ge t_0$. For $j=1,2,\ldots,\lfloor n/t_0\rfloor$, let us define the \emph{$j$-th interval} $I_j\coloneqq \{(j-1)t_0+1,\ldots, jt_0\}$, and the \emph{final interval} $I_{\mathrm{final}}\coloneqq \{\lfloor n/t_0\rfloor t_0+1,\ldots, n \}$.
\end{defn}
If $n$ is a multiple of $t_0$, then we have $I_{\mathrm{final}}=\varnothing.$

Recall that we denote by $\{g_i\}_{i\ge 1}$ the sequence of independent increments of the random walk on $A\wr B$.
\begin{defn}\label{def: bad elements and bad intervals}
	Let $A, B$ be countable groups and consider finite subsets $L\subseteq A$ and $R\subseteq B$ with $e_A\in L$ and $e_B\in R$.
	\begin{itemize}
		\item We say that a group element $g=(f,x)\in A\wr B$ is \emph{$(L,R)$-good} if $x\in R$, $\supp{f}\subseteq R$ and $f(b)\in L$ for each $b\in B$. Otherwise, call $g$ an \emph{$(L,R)$-bad} element.
		\item Let $t_0\ge 1$ and $n\ge t_0$. For each $j=1,2,\ldots,\lfloor n/t_0\rfloor$, we say that the interval $I_j$ is \emph{$(L,R)$-good} if all the increments $g_i$, $i\in I_j$, are $(L,R)$-good. Otherwise, we say that the interval $I_j$ is \emph{$(L,R)$-bad}.
	\end{itemize}
\end{defn}

\begin{rem}
	If $A$ and $B$ are finitely generated, we could alternatively define $g\in G$ to be a bad element if the word length of $g$ (with respect to some finite generating set of $A\wr B$) is larger than a given constant. The rest of the paper, and in particular the proofs of the main theorems in Section \ref{section: proof of the main theorem}, would work with this definition as well.
\end{rem}

We now introduce the auxiliary symbol $\star$, which will play the role of ``unknown'' information. We will use the symbol $\star$ to transform a random variable with finite Shannon entropy events into one with low entropy, by replacing its values by $\star$ in a large measurable subset.
\begin{defn}\label{defn: define the partition of bad and final intervals.}
	Let $n\ge t_0\ge 1$ and consider finite subsets $L\subseteq A$ and $R\subseteq B$ with $e_A\in L$ and $e_B\in R$. For every $j=1,2,\ldots,\lfloor n/t_0 \rfloor$, define the random variable
	\begin{equation*}
		Z_j\coloneqq\begin{cases}
			(g_i)_{i\in I_j}, &\text{ if }I_j \text{ is an }(R,L)\text{-bad interval, and}\\
			\star, &\text{otherwise,}
		\end{cases}
	\end{equation*}
	which has values in the space $(A\wr B)^{I_j}\cup \{\star\}$.
	Let us also define $Z_{\mathrm{final}}\coloneqq (g_i)_{i\in I_{\mathrm{final}}}$, which has values in the space $(A\wr B)^{I_{\mathrm{final}}}$. We define the partition of \emph{$(t_0,L,R)$-bad increments} by
	\begin{equation*}
		\beta_n(t_0,L,R)\coloneqq (Z_{1},Z_{2},\ldots,Z_{\lfloor n/t_0 \rfloor},Z_{\mathrm{final}}).
	\end{equation*}
\end{defn}
The symbol $\star$ serves as a tool for reducing the entropy of events with high entropy. By partially obscuring the values of a random variable using the symbol $\star$ when some very likely event occurs, we obtain a random variable with small entropy. This is used in the Lemma \ref{lem: bad increments have small entropy} below, and more precisely in Claim \ref{claim: uniform obscuring lemma}, in order to show that, if $L$ and $R$ are large enough, then the partition of $(t_0,L,R)$-bad increments has a small amount of entropy. 

\begin{lem}\label{lem: bad increments have small entropy}  Let $\mu$ be a probability measure on $A\wr B$ with $H(\mu)<\infty$, and consider a sequence $\{\mu_k\}_{k\ge 1}$ of probability measures on $A\wr B$ with $H(\mu_k)<\infty$ for all $k\ge 1$. Suppose that $\lim_{k\to \infty}\mu_k(g)=\mu(g)$ for all $g\in A\wr B$, and that $\lim_{k\to \infty}H(\mu_k)=H(\mu)$. Then for any $t_0\ge 1$ and every $\varepsilon>0$, there exist finite symmetric subsets $L\subseteq A$ and $R\subseteq B$ with $e_A\in L$ and $e_B\in R$, $C\ge 0$ and $K\ge 1$ such that for every $k\ge K$ and $n\ge t_0$, we have
	\begin{equation*}\label{eq: bad increments entropy}
		H_{\mu_k}(\beta_n(t_0,L,R))<\varepsilon n + C.
	\end{equation*}
\end{lem}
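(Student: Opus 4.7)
The plan is to exploit the independence of the increments to decompose the entropy as a sum of block-wise contributions, and then to control each block uniformly in $k$ using the entropy-tightness of $\{\mu_k\}$.

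Since the increments $g_1,g_2,\ldots$ are i.i.d.\ with law $\mu_k$ and the intervals $I_1,\ldots,I_{\lfloor n/t_0\rfloor},I_{\mathrm{final}}$ are pairwise disjoint, the random variables $Z_1,\ldots,Z_{\lfloor n/t_0\rfloor},Z_{\mathrm{final}}$ are mutually independent. Hence
\[
H_{\mu_k}(\beta_n(t_0,L,R))=\sum_{j=1}^{\lfloor n/t_0\rfloor}H_{\mu_k}(Z_j)+H_{\mu_k}(Z_{\mathrm{final}}).
\]
The final-interval contribution is at most $(t_0-1)H(\mu_k)$, which is uniformly bounded once we use $H(\mu_k)\to H(\mu)<\infty$: say $H(\mu_k)\le M$ for $k\ge K_0$, so this term is absorbed into the constant $C:=(t_0-1)M$. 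The blocks $Z_1,\ldots,Z_{\lfloor n/t_0\rfloor}$ are identically distributed, so it suffices to bound $H_{\mu_k}(Z_1)$ by a quantity of order $\varepsilon t_0$ uniformly in $k$.

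To bound a single block, set $p_k:=\P_{\mu_k}(g_1\text{ is }(L,R)\text{-bad})$ and $P_k:=1-(1-p_k)^{t_0}$, which is the probability that $I_1$ is bad. Since $Z_1=\star$ when no $g_i$ with $i\in I_1$ is bad and $Z_1=(g_i)_{i\in I_1}$ otherwise, a direct computation of the Shannon entropy gives
\[
H_{\mu_k}(Z_1)=-(1-P_k)\log(1-P_k)-\sum_{(g_i)\text{ bad}}\prod_{i\in I_1}\mu_k(g_i)\log\prod_{i\in I_1}\mu_k(g_i).
\]
Expanding $\log\prod_i\mu_k(g_i)=\sum_i\log\mu_k(g_i)$, exchanging the sums, and splitting by whether each fixed coordinate $g_i$ is good or bad yields
\[
H_{\mu_k}(Z_1)\le -(1-P_k)\log(1-P_k)+t_0\,H_{\mathrm{bad}}(\mu_k)+t_0\bigl(1-(1-p_k)^{t_0-1}\bigr)H(\mu_k),
\]
where $H_{\mathrm{bad}}(\mu_k):=-\sum_{g\text{ bad}}\mu_k(g)\log\mu_k(g)$ is the contribution to $H(\mu_k)$ of the bad elements.

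To finish, I would choose $L,R$ large enough that $p_k$ and $H_{\mathrm{bad}}(\mu_k)$ are simultaneously small uniformly in $k$. The pointwise convergence hypothesis upgrades to convergence in total variation by Lemma~\ref{lem: pointwise convergence is equivalent to total variation convergence}, and together with $H(\mu_k)\to H(\mu)<\infty$ this gives uniform entropy-tightness of $\{\mu_k\}$ via \cite[Lemma~3.5]{AmirAngelVirag2013}: for every $\eta>0$ there exist a finite $F\subseteq A\wr B$ and $K\ge K_0$ such that $\mu_k(A\wr B\setminus F)<\eta$ and $\sum_{g\notin F}-\mu_k(g)\log\mu_k(g)<\eta$ for all $k\ge K$. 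Choosing finite symmetric subsets $L\subseteq A,R\subseteq B$ containing $e_A,e_B$ and large enough that every element of $F$ is $(L,R)$-good ensures that every $(L,R)$-bad element lies outside $F$, whence $p_k<\eta$ and $H_{\mathrm{bad}}(\mu_k)<\eta$ for $k\ge K$. Taking $\eta$ small enough (depending only on $t_0$, $\varepsilon$ and $M$) forces $H_{\mu_k}(Z_1)<\varepsilon t_0$, and summing over $j$ gives the required estimate. The main obstacle is precisely this uniform control of $H_{\mathrm{bad}}(\mu_k)$: tightness of $\{\mu_k\}$ alone is not sufficient, and the hypothesis $H(\mu_k)\to H(\mu)$ is essential, entering through the Amir--Angel--Virag entropy-tightness lemma.
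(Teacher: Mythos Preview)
Your proof is correct and follows the same overall architecture as the paper's: decompose $H_{\mu_k}(\beta_n)$ block by block, absorb the final interval into a constant, and bound each $H_{\mu_k}(Z_j)$ uniformly in $k$ using entropy-tightness of $\{\mu_k\}$. The difference is in how the single-block bound is obtained. The paper packages this step as an abstract ``uniform obscuring lemma'': working in the product space $D=(A\wr B)^{t_0}$, it shows that for any event $E\subseteq D$ of small enough $\mu^{t_0}$-probability, the variable $W_j\cdot\mathds{1}_E+\star\cdot\mathds{1}_{E^c}$ has small entropy uniformly in $k$, by first finding a finite $Q\subseteq D$ concentrating the entropy of $\mu^{t_0}$ and then transferring this to $\mu_k^{t_0}$. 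You instead exploit the i.i.d.\ structure \emph{within} the block, expanding $H_{\mu_k}(Z_1)$ explicitly into the three one-step quantities $P_k$, $H_{\mathrm{bad}}(\mu_k)$, and $(1-(1-p_k)^{t_0-1})H(\mu_k)$, and then controlling these directly via entropy-tightness of the single-step measures $\mu_k$. Your route is more elementary and transparent for this specific lemma; the paper's obscuring lemma is more modular and reusable (it does not depend on the block having product structure). Both rely on exactly the same hypothesis $H(\mu_k)\to H(\mu)$ at the crucial point, and both invoke the Amir--Angel--Vir\'ag entropy-tightness mechanism, so the conceptual content is the same.
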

\begin{proof}
	Consider $t_0\ge 1$ and $\varepsilon>0$. We first remark that the interval $I_{\mathrm{final}}$ is composed of at most $t_0$ instants, so that
	\(
	H_{\mu_k}(Z_{\mathrm{final}})=H_{\mu_k}((g_i)_{i\in I_{\mathrm{final}}})\le t_0 H(\mu_k).
	\)
	We are assuming that $H(\mu_k)\xrightarrow[k\to \infty]{}H(\mu)$, and hence there is a constant $C\ge 0$ and $K_1\ge 1$ such that $H(\mu_k)\le C/t_0$ holds for every $k\ge K_1$. Combining this with the above, we have that
	\(
	H_{\mu_k}(Z_{\mathrm{final}})\le C \text{ for every } k\ge K_1.
	\)

	We now proceed to estimate the entropy of $(Z_1,\ldots, Z_{\lfloor n/t_0\rfloor})$.
	
	Let us introduce the auxiliary random variable $W_{j}\coloneqq(g_i)_{i\in I_j}$, for each $1\le j \le \lfloor n/t_0 \rfloor$. That is, $W_j$ corresponds to the ordered sequence of increments of the random walk that occurred during the interval $I_j$. Note that for each fixed step distribution $\mu_k$, the variables $W_{j}$ are identically distributed and have finite entropy $t_0H(\mu_k)$. 
	
	The following is a version of the ``obscuring lemma'' which appears in \cite[Lemma 2.4]{ChawlaForghaniFrischTiozzo2022} and \cite[Lemma 3.7]{FrischSilva2024}, that is uniform along the sequence of probability measures $\mu_k$.
	\begin{claim}\label{claim: uniform obscuring lemma} For any $\varepsilon>0$ and $t_0\ge 1$ there are $\delta>0$ and $K_2\ge 1$ such that for every $1\le j \le \lfloor n/t_0 \rfloor$ the following holds: if $E\subseteq (A\wr B)^{t_0}$ is a subset that satisfies $\mu^{t_0}(E)<\delta$, then the random variable
		\begin{equation*}
			\widetilde{W}_{j}\coloneqq\begin{cases}
				W_j, &\text{ if }E\text{ occurs}, \text{ and}\\
				\star, &\text{ otherwise,}
			\end{cases}
		\end{equation*}
		satisfies $H_{\mu_k}(\widetilde{W}_{j})<\varepsilon$ for every $k\ge K_2$.
	\end{claim}
	\begin{proof}
		For this proof, we introduce the notation $\kappa:[0,1]\to [0,1]$ for the function $\kappa(x)\coloneqq -x\log(x)$, where we use the standard convention $0\log(0)=0$. The function $\kappa$ is continuous, concave, strictly increasing in the interval $[0,1/e]$ and strictly decreasing in the interval $[1/e,1].$
		
		Let $\varepsilon>0$, $t_0\ge 1$ and fix any $1\le j \le \lfloor n/t_0 \rfloor$. Note that $W_j$ takes values in the countable set $D\coloneqq (A\wr B)^{t_0}$. Since $\sum_{d\in D}\kappa(\mu^{t_0}(d))=H_{\mu}(W_j)=t_0H(\mu)<\infty$, we can find a finite subset $Q\subseteq D$ such that
		\begin{equation}\label{eq: concentration entropy mu}
			\sum_{d\in D\backslash Q}\kappa(\mu^{t_0}(d))<\frac{\varepsilon}{2}.	
		\end{equation}
		Note that the choice of $Q$ is independent of the value of $j$.
		
		In addition to the above, the assumptions $H(\mu_k)\xrightarrow[k\to \infty]{}H(\mu)$ and $\mu_k\xrightarrow[k\to \infty]{}\mu$ imply that we can find $K_2\ge 1$ such that if $k\ge K_2$, then we have
		\begin{itemize}
			\item  $|H(\mu_k)-H(\mu)|<\frac{\varepsilon}{4}$, and
			\item  $\left|\sum_{d\in Q}\kappa(\mu_k^{t_0}(d))-\kappa(\mu^{t_0}(d))\right|<\frac{\varepsilon}{4}$.
		\end{itemize}
		Together with Equation \eqref{eq: concentration entropy mu}, this implies that for $k\ge K_2$ we have
		\begin{align*}
			\sum_{d\in D\backslash Q}\kappa(\mu_k^{t_0}(d)) &= H(\mu_k)-\sum_{d\in  Q}\kappa(\mu_k^{t_0}(d))\\
			&\le H(\mu) -\sum_{d\in  Q}\kappa(\mu^{t_0}(d))+ \frac{\varepsilon}{2}\\
			&=\sum_{d\in D\backslash Q}\kappa(\mu^{t_0}(d))+\frac{\varepsilon}{2}\\& <\varepsilon.
		\end{align*}
		Thanks to the continuity of $\kappa$ and the fact that $\kappa(0)=\kappa(1)=0$, we can find $ 0<\delta<1/e $ small enough such that $\kappa(1-\delta)+|Q|\kappa(\delta)<\varepsilon/2$. Consider an arbitrary measurable subset $E$ with $\mu^{t_0}(E)<\delta$ and consider $\widetilde{W_j}$ defined as in the statement of the claim. Then, for every $k\ge K_2$, we obtain
		\begin{align*}
			H_{\mu_k}(\widetilde{W_j})&=\kappa(\mu^{t_0}_k(\Omega\backslash E))+\sum_{d\in Q}\kappa(\mu^{t_0}_k( \{d\}\cap E ))+\sum_{d\in D\backslash Q}\kappa(\mu^{t_0}_k(\{d\}\cap E))\\
			&\le \kappa(1-\delta)+\sum_{d\in Q}\kappa(\mu^{t_0}_k(E))+\sum_{d\in D\backslash Q}\kappa(\mu^{t_0}_k(d))\\
			&\le\kappa(1-\delta)+|Q|\kappa(\delta)+\frac{\varepsilon}{2}\\
			&<\varepsilon,
		\end{align*}
		which concludes the proof.
	\end{proof}	
	Let $\delta$ and $K_2$ be as in Claim \ref{claim: uniform obscuring lemma}. Since $\mu$ is a probability measure, we can find finite symmetric subsets $L\subseteq A$ and $R\subseteq B$ with $e_A\in L$ and $e_{B}\in R$ such that $$\mu(\{g\in G\mid g\text{ is not }(L,R)\text{-bad}\})>1-\frac{\delta}{2t_0}.$$ Then, the pointwise convergence of the sequence $\{\mu_k\}_{k\ge 1}$ to $\mu$ guarantees that there is $K_3\ge K_2$ such that $\mu_k(\{g\in G\mid g\text{ is not }(L,R)\text{-bad}\})>1-\frac{\delta}{t_0}$ for every $k\ge K_3$. In particular, for every $1\le j \le \lfloor n/t_0 \rfloor$, we have
	\begin{equation*}
		\mu_k^{t_0}(I_j \text{ is an }(L,R)\text{-bad interval})\le t_0\mu_k(\{g\in G\mid g\text{ is }(L,R)\text{-bad}\})<\delta \text{ for all }k\ge K_3.
	\end{equation*}
	Consider the random variable $\widetilde{W}_{j}$ as defined in Claim \ref{claim: uniform obscuring lemma} associated with the event $$E_j\coloneqq \left \{I_j \text{ is an }(L,R)\text{-bad interval} \right\}.$$ Then $H_{\mu_k}(\widetilde{W}_{j})<\varepsilon$ for every $k\ge K_3$, and we have $Z_j=\widetilde{W}_{j}$. We therefore conclude that, for $k\ge \max\{K_1,K_3\}$, we have
	\begin{equation*}
		H(\beta_n(t_0,L,R))\le \sum_{j=1}^{ \lfloor n/t_0 \rfloor}H_{\mu_k}(Z_j)+Z_{\mathrm{final}}<\varepsilon n +C	\ \text{ for every }n\ge t_0.
	\end{equation*}
\end{proof}
\subsection{The coarse neighborhood}
In this subsection, we will define the coarse neighborhood in terms of the coarse trajectory in the base group $B$. For that, we write $\mathcal{P}_n^{t_0}\coloneqq \mathcal{P}_n^{t_0}(B)$. Intuitively, the coarse neighborhoods consists of the positions in the base groups where there could have been lamp modifications during good time intervals.

\begin{defn}\label{def:neighborhood of coarse traj}
	Let $t_0\ge 1$ and consider a finite subset $R\subseteq B$ with $e_B\in R$. For each $n\ge t_0$ define the \emph{$(t_0,R)$-coarse neighborhood} of the trajectory at instant $n$ by
	\begin{equation*}
		\mathcal{N}_n(t_0,R)\coloneqq \bigcup_{j=0}^{\lfloor n/t_0 \rfloor -1} 	X_{jt_0}R^{t_0},  \text{ where}
	\end{equation*}
	\begin{equation*}
		X_{jt_0}R^{t_0}\coloneqq \left\{X_{jt_0}r_1r_2\cdots r_{t_0}\ \Big|\ r_k\in R, \text{ for }k=1,2,\ldots,t_0 \right \}, \text{ for each }j=0,1,\ldots \lfloor n/t_0 \rfloor -1.
	\end{equation*}
\end{defn}

In order to estimate the entropy of the $n$-th instant of the random walk, it will be useful to divide the values of the lamp configuration into the ones inside the coarse neighborhood and the ones outside of it.

\begin{defn}\label{def: entropy in and out lamps} Consider $t_0\ge 1$ and a finite subset $R\subseteq B$ with $e_B\in R$. We define the \emph{lamp configuration inside the $(t_0,R)$-coarse neighborhood} $\mathcal{N}_n(t_0,R)$ as
	\begin{equation*} \Phi_n^{\mathrm{in}}\coloneqq \varphi_n|_{\mathcal{N}_n(t_0,R)},
	\end{equation*}
	and the \emph{lamp configuration outside the $(t_0,R)$-coarse neighborhood} $\mathcal{N}_n(t_0,R)$ as
	\begin{equation*}
		\Phi_{n}^{\mathrm{out}}\coloneqq \varphi_n|_{B\backslash \mathcal{N}_n(t_0,R)}.
	\end{equation*}
\end{defn}

For the proofs in Section \ref{section: proof of the main theorem}, it will be important that we are able to estimate not only the entropy of the lamp configuration $\varphi_n$ for $n$ large enough, but also the values $\varphi_{Nt_0}, \varphi_{Nt_0},\ldots, \varphi_{\lfloor n/Nt_0\rfloor Nt_0}$ for $N\ge 1$.

The following lemma states that the values of the lamp configuration outside of the $(t_0,R)$-coarse neighborhood every $Nt_0$ instants can be determined completely from the information of the $t_0$-coarse trajectory in the base group $B$, together with the partition of $(L,R)$-bad increments.

\begin{lem}\label{lem: entropy lamps outside}
	Let $\{\mu_k\}_{k\ge 1}$ be a sequence of probability measures on $A\wr B$ with $H(\mu_k)<\infty$ for all $k\ge 1$.	Then, for any $N,t_0\ge 1$, each $\varepsilon>0$, all finite subsets $L\subseteq A$ and $R\subseteq B$ with $e_A\in L$ and $e_B\in R$, every $k\ge 1$ and any $n\ge Nt_0$ it holds that
	\[
	H_{\mu_k}\left( \Phi_{Nt_0}^{\mathrm{out}},\Phi_{2Nt_0}^{\mathrm{out}},\ldots, \Phi_{\left\lfloor \frac{n}{Nt_0} \right \rfloor Nt_0}^{\mathrm{out}}  \Big | \ \mathcal{P}_n^{t_0} \vee \beta_n(t_0,L,R)\right)=0.
	\]
\end{lem}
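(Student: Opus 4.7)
The plan is to show that conditional on the coarse trajectory and on the partition of bad increments, the outside lamp configurations $\Phi_{mNt_0}^{\mathrm{out}}$ are in fact \emph{deterministically} reconstructible for every $m=1,\ldots,\lfloor n/(Nt_0)\rfloor$. Since a partition is refined by a deterministic function of the conditioning data, the conditional entropy is zero.

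First I would decompose the lamp configuration into the contributions of the successive intervals. From the wreath-product multiplication $(\varphi_{i-1},X_{i-1})\cdot(f_i,Y_i)=(\varphi_{i-1}\oplus(X_{i-1}\cdot f_i),X_{i-1}Y_i)$, one has for each $m$
\[
\varphi_{mNt_0}=\bigoplus_{j=1}^{mN}\Delta_j,\qquad \Delta_j\coloneqq\bigoplus_{i\in I_j}X_{i-1}\cdot f_i.
\]
So the key point is to localize the support of each $\Delta_j$ depending on whether the interval $I_j$ is good or bad.

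For a good interval $I_j$ (in the sense of Definition \ref{def: bad elements and bad intervals}), every increment $g_i=(f_i,Y_i)$ with $i\in I_j$ satisfies $Y_i\in R$ and $\supp f_i\subseteq R$. Hence for $i\in I_j$ we have $X_{i-1}=X_{(j-1)t_0}Y_{(j-1)t_0+1}\cdots Y_{i-1}\in X_{(j-1)t_0}R^{t_0-1}$, so $\supp(X_{i-1}\cdot f_i)\subseteq X_{(j-1)t_0}R^{t_0}$, and therefore $\supp\Delta_j\subseteq X_{(j-1)t_0}R^{t_0}\subseteq\mathcal N_n(t_0,R)$ (the last inclusion uses $j\le mN\le \lfloor n/t_0\rfloor$, which follows from $m\le\lfloor n/(Nt_0)\rfloor$). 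Consequently the restriction of $\Delta_j$ to $B\setminus\mathcal N_n(t_0,R)$ vanishes whenever $I_j$ is good, and we obtain
\[
\Phi_{mNt_0}^{\mathrm{out}}=\Bigl(\bigoplus_{\substack{j\le mN\\ I_j\ (L,R)\text{-bad}}}\Delta_j\Bigr)\Big|_{B\setminus\mathcal N_n(t_0,R)}.
\]

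It remains to observe that the right-hand side is a deterministic function of $\mathcal P_n^{t_0}\vee\beta_n(t_0,L,R)$. Indeed, $\mathcal P_n^{t_0}$ determines every starting position $X_{(j-1)t_0}$, while $\beta_n(t_0,L,R)$ records, for each bad interval $I_j$, the entire ordered sequence of increments $(g_i)_{i\in I_j}$ (and also tells us which intervals are bad, via the $\star$ symbol). Combining the starting position with the full sequence of bad increments yields each $\Delta_j$ for $j$ bad, and hence the full sum above. Since this holds simultaneously for all $m\le \lfloor n/(Nt_0)\rfloor$, the tuple $(\Phi_{Nt_0}^{\mathrm{out}},\ldots,\Phi_{\lfloor n/(Nt_0)\rfloor Nt_0}^{\mathrm{out}})$ is measurable with respect to $\mathcal P_n^{t_0}\vee\beta_n(t_0,L,R)$, yielding the desired vanishing of the conditional entropy. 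There is no genuine obstacle here; the only point to double-check is the combinatorial bookkeeping ensuring that $X_{(j-1)t_0}R^{t_0}$ appears in the union defining $\mathcal N_n(t_0,R)$ for every $j\le mN$, which follows from the bound $mN\le\lfloor n/t_0\rfloor$ highlighted above.
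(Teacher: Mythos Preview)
Your argument is correct and follows exactly the same approach as the paper's proof, only with more detail: you make explicit the decomposition $\varphi_{mNt_0}=\bigoplus_{j\le mN}\Delta_j$, check that good intervals contribute only inside the coarse neighborhood, and conclude that the outside part is a function of the bad increments together with the coarse trajectory.

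One small slip to correct: by Definition~\ref{def: entropy in and out lamps}, $\Phi_{mNt_0}^{\mathrm{out}}=\varphi_{mNt_0}\big|_{B\setminus\mathcal N_{mNt_0}(t_0,R)}$, not the restriction to $B\setminus\mathcal N_n(t_0,R)$. Your displayed identity for $\Phi_{mNt_0}^{\mathrm{out}}$ should therefore use $\mathcal N_{mNt_0}$ in place of $\mathcal N_n$. The supporting inclusion you prove, $\supp\Delta_j\subseteq X_{(j-1)t_0}R^{t_0}$ for good $j\le mN$, already lands in $\mathcal N_{mNt_0}(t_0,R)$ (since $j-1\le mN-1=\lfloor mNt_0/t_0\rfloor-1$), so the argument goes through unchanged with this correction; and $\mathcal N_{mNt_0}(t_0,R)$ is still determined by $\mathcal P_n^{t_0}$, so the final measurability step is unaffected.
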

\begin{proof}
	The random variable $\Phi_{n}^{\mathrm{out}}(t_0,L,R)$ consists of the lamp configuration at positions outside of the $(t_0,L,R)$-coarse neighborhood. By definition, any increment that modified the lamp configuration in one of these positions must have been realized during an $(L,R)$-bad interval, so that its value is contained in the partition $\beta_n(t_0,L,R)$. Hence, we conclude that $\Phi_{n}^{\mathrm{out}}(t_0,L,R)$ is completely determined by the values of $\mathcal{P}_n^{t_0}\vee \beta_n(t_0,L,R)$.
\end{proof}

\section{The entropy of lamps inside the coarse neighborhood}\label{section: entropy lamps inside}
The goal of this section is to estimate the entropy of the lamp configuration \emph{inside} the $(t_0,R)$-neighborhood, sampled every $Nt_0$ steps up to time $n$, and conditioned simultaneously on the element $w_n$ of the random walk at time $n$, the $t_0$-coarse trajectory in the base group, and the $(t_0,R,L)$-bad increments.

\begin{prop}\label{prop: entropy lamps inside} Let  $\mu$ be a probability measure on $A\wr B$ with $H(\mu)<\infty$, and consider a sequence $\{\mu_k\}_{k\ge 1}$ of probability measure on $A\wr B$ with $H(\mu_k)<\infty$ for all $k\ge 1$. Suppose that $\lim_{k\to \infty}\mu_k(g)=\mu(g)$ for all $g\in A\wr B$ and that $\lim_{k\to \infty}H(\mu_k)=H(\mu)$. Denote by $\pi:A\wr B\to B$ the canonical epimorphism, and let us furthermore suppose that 
	\begin{itemize}
		\item $\lim_{k\to \infty}\pesc{\pi_{*}\mu_k}=\pesc{\pi_{*}\mu}$
		\item  $\pi_{*}\mu$-random walk on $B$ is transient, and
		\item $\langle \supp{\pi_{*}\mu}\rangle_{+}$ is symmetric.
	\end{itemize}
	Consider any finite subsets $L\subseteq A$ and $R\subseteq B$ with $e_A\in L$ and $e_B\in R$. Then for every $\varepsilon>0$, there exist $K,n_0, T\ge 1$ such that for all $k\ge K$, every $t_0\ge T$, any $N>n_0$ and all $n> Nt_0$ we have 
	\[
	H_{\mu_k}\left( \Phi_{Nt_0}^{\mathrm{in}},\Phi_{2Nt_0}^{\mathrm{in}},\ldots, \Phi_{\left\lfloor \frac{n}{Nt_0} \right \rfloor Nt_0}^{\mathrm{in}}  \Big | \ w_n\vee \mathcal{P}_n^{t_0} \vee \beta_n(t_0,L,R)\right)<\varepsilon n +(H(\mu)+1)\frac{nn_0}{N}.
	\]
\end{prop}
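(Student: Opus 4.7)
I would apply a backward chain rule over the checkpoints $\tau_j := jNt_0$, $j = 1, \ldots, M := \lfloor n/(Nt_0)\rfloor$, decomposing the joint entropy as $\sum_{j=1}^M H_j$, where $H_j$ is the conditional entropy of $\Phi_{\tau_j}^{\mathrm{in}}$ given $\Phi_{\tau_{j+1}}^{\mathrm{in}}, \ldots, \Phi_{\tau_M}^{\mathrm{in}}, w_n, \mathcal{P}_n^{t_0}$ and $\beta_n(t_0,L,R)$. Since $\mathcal{N}_{\tau_j}(t_0, R) \subseteq \mathcal{N}_{\tau_{j+1}}(t_0, R)$, the conditioning determines $\varphi_{\tau_{j+1}}$ restricted to $\mathcal{N}_{\tau_j}(t_0, R)$ as well as all bad modifications occurring in $(\tau_j, \tau_{j+1}]$; hence $\Phi_{\tau_j}^{\mathrm{in}}$ can be recovered from the cumulative good modification $G_j$ in $(\tau_j, \tau_{j+1}]$ restricted to $\mathcal{N}_{\tau_j}(t_0, R)$, and $H_j \leq H(G_j \mid \mathrm{cond})$ (with the terminal case $j = M$ handled analogously using $w_n$ in place of $\Phi_{\tau_{M+1}}^{\mathrm{in}}$, together with bad mods in $(\tau_M, n]$).

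Next I would partition the window $(\tau_j, \tau_{j+1}]$ into a \emph{recent} part $(\tau_j, \tau_j + n_0]$ and a \emph{far} part $(\tau_j + n_0, \tau_{j+1}]$, and split $G_j = G_j^{\mathrm{rec}} + G_j^{\mathrm{far}}$. The recent piece is bounded crudely by the joint entropy of the $n_0$ increments in its window, giving $H(G_j^{\mathrm{rec}}) \leq n_0 H(\mu_k) \leq n_0 (H(\mu)+1)$ for all $k \geq K$ large enough (using Lemma \ref{lem: convolutions entropy convergence}). Summing over $j$, with $M \leq n/(Nt_0)$ and $t_0 \geq 1$, yields $\sum_j H(G_j^{\mathrm{rec}}) \leq (H(\mu)+1)\,n n_0 / N$, exactly matching the second term of the target bound.

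The core of the argument is controlling $\sum_j H(G_j^{\mathrm{far}})$. A good increment at time $i$ can affect $\mathcal{N}_{\tau_j}(t_0, R) = \bigcup_{\ell < jN} X_{\ell t_0} R^{t_0}$ only when $X_{\ell t_0}^{-1} X_{i-1} \in R^{t_0+1}$ for some $\ell < jN$; let $\tilde V_j$ be the number of such times $i$ in the far window of checkpoint $j$. Since the far windows of distinct checkpoints are disjoint and each admissible pair $(i,\ell)$ satisfies $k := i - 1 - \ell t_0 \geq n_0$, reparametrizing by the gap $k$ and applying the Markov property gives
\[
\sum_{j=1}^M \mathbb{E}[\tilde V_j] \;\leq\; \frac{n}{t_0} \sum_{k \geq n_0} (\pi_*\mu_k)^{*k}\bigl(R^{t_0+1}\bigr).
\]
By the transience of the $\pi_*\mu$-walk (so that $\sum_k (\pi_*\mu)^{*k}(g) < \infty$ for each $g$), the hypothesis $\pesc{\pi_*\mu_k} \to \pesc{\pi_*\mu}$, and Lemmas \ref{lem: convolutions continuity} and \ref{lem: expected number of visits is 1 over 1-return prob} together with Proposition \ref{prop: uniform decay for return to a finite subset} to pass to the uniform control in $k \geq K$, the right-hand tail can be made arbitrarily small by choosing $n_0$ and $K$ large. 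I would then bound $H(G_j^{\mathrm{far}}) \leq H(\tilde V_j) + \mathbb{E}[\tilde V_j]\,(H(\mu_k) + O(t_0 \log|R|))$, where the $O(t_0 \log|R|)$ location cost per contributing increment is absorbed by exploiting that $\mathcal{P}_n^{t_0}$ confines $X_{i-1}$ within its good sub-interval to at most $|R|^{t_0}$ possibilities. A final parameter choice — take $T$ large so that $\log(Nt_0)/(Nt_0) \leq \varepsilon$ whenever $Nt_0 \geq n_0 T$, then pick the tail parameter small in terms of $\varepsilon, t_0, |R|, H(\mu)$, and finally $n_0$ and $K$ accordingly — yields $\sum_j H(G_j^{\mathrm{far}}) \leq \varepsilon n$, closing the argument when combined with the recent bound. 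The main obstacle is precisely the uniformity in $N$: a naive per-checkpoint estimate involves the factor $|\mathcal{N}_{\tau_j}(t_0, R)| \leq jN|R|^{t_0}$ which grows with $j$ and would force $n_0$ to depend on $n$; the global gap-based reparametrization above is exactly what makes $K, n_0$ and $T$ independent of both $N$ and $n$.
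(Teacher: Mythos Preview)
Your backward-recursion route is genuinely different from the paper's. The paper does not telescope between consecutive checkpoints; instead it introduces three \emph{global} auxiliary partitions defined once for the whole interval $[0,n]$: the set $\mathcal{U}_n$ of unstable points (positions $b\in X_{jt_0}R^{t_0}\cap X_{\ell t_0}R^{t_0}$ for some $\ell>j+n_0$), the set $\mathcal{V}_n$ of visits to unstable points, and the map $\Delta_n$ recording the good-interval increments at unstable positions. After paying $\varepsilon n$ to add these to the conditioning (Lemmas~\ref{lem: entropy of unstable points is small}, \ref{lem: entropy of unstable visits is small}, \ref{lem: unstable increments have small entropy}), the paper recovers $\varphi_{\ell Nt_0}(b)$ \emph{forward} from the final value $\varphi_n(b)$: if $b\notin\mathcal{U}_n$ then no good modification touches $b$ after the last early visit, so $\varphi_n(b)$ and $\beta_n$ determine $\varphi_{\ell Nt_0}(b)$; if $b\in\mathcal{U}_n$ the later good modifications are read off from $\Delta_n$. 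Only positions first entering the coarse neighbourhood in the last $n_0$ blocks before each checkpoint remain, and revealing the increment $w_{(\ell N-n_0)t_0}^{-1}w_{\ell Nt_0}$ there gives the $(H(\mu)+1)nn_0/N$ term. Your approach replaces this global unstable-point machinery by a local recent/far split in each window $(\tau_j,\tau_{j+1}]$; it is more direct and avoids the extra definitions, while the paper's version is more modular and makes the role of ``long-range returns'' explicit through a single object $\mathcal{U}_n$.

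There is, however, a genuine problem in your parameter bookkeeping. Your far bound passes through the finite set $R^{t_0+1}$: to get $\sum_j\mathbb{E}[\tilde V_j]\cdot O(t_0\log|R|)\le \varepsilon n$ you need the tail $\sum_{m\ge n_0}(\pi_*\mu_k)^{*m}(R^{t_0+1})$ to be small, but $|R^{t_0+1}|$ grows with $t_0$, which forces $n_0$ to depend on $t_0$. Your sentence ``pick the tail parameter small in terms of $\varepsilon,t_0,|R|,H(\mu)$, and finally $n_0$ and $K$ accordingly'' concedes exactly this, yet the proposition fixes $K,n_0,T$ \emph{before} the universal quantifier over $t_0\ge T$, so your order of choices is circular. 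In fairness the paper's own proof has the same tension (Lemmas~\ref{lem: entropy of unstable points is small} and~\ref{lem: unstable increments have small entropy} are stated with $n_0$ depending on $t_0$ and on $F=R^{t_0}$), and the downstream application in Theorem~\ref{thm: continuity asymptotic entropy wreath prods} only ever uses one fixed value of $t_0$, so the weaker statement ``for every $\varepsilon>0$ and every sufficiently large $t_0$ there exist $K,n_0$'' suffices and is what both arguments actually deliver. If you target that version, your scheme goes through; as stated, neither your estimate nor the paper's yields $n_0$ uniform in $t_0\ge T$.
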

This result is proved in Subsection \ref{subsection: proof of entropy inside}. In order to do this, we introduce additional partitions of the space of trajectories in Subsections \ref{subsection: unstable elements and visits} and \ref{subsection: unstable increments}, which do not appear in the proof of our main theorems outside of this section.
\subsection{Unstable elements and their visit times}\label{subsection: unstable elements and visits}
The objective of this subsection is to estimate the entropy contained in the lamp configuration at positions that are visited by the random walk in the base group at time instants that are far apart.

\begin{defn}\label{def: unstable elements and visits to unstable elements} Let $n_0,t_0\ge 1$ and let $F\subseteq B$ be a finite subset such that $e_{B}\in F$. For each $n>n_0t_0$ and each trajectory $\{(\varphi_i,X_i)\}_{i=0}^n$ of length $n$ on $A\wr B$, we define
	\begin{enumerate}
		\item \label{item: def unstable elements} the set $\mathcal{U}_n(n_0,t_0,F)$ of \emph{$(n_0,t_0,F)$-unstable points at instant $n$} as the one composed of the elements $b\in B$ such that there are $j\in \left\{0,1,2,\ldots,\left\lfloor n/t_0\right\rfloor-n_0\right\}$ and \\$\ell \in \left\{0,1,2,\ldots,\left\lfloor n/t_0\right\rfloor\right\}$ with $\ell > j+n_0$ and $b\in X_{jt_0}F\cap X_{\ell t_0}F$, and
		
		\item \label{item: def unstable visits} the set $\mathcal{V}_n(n_0,t_0,F)$ of \emph{visits to $(n_0,t_0,F)$-unstable points by time $n$} as the one composed by the instants $j\in \left\{0,1,2,\ldots,\left\lfloor n/t_0\right\rfloor -n_0\right\}$ such that \\ $X_{jt_0}F\cap \mathcal{U}_n(n_0,t_0,F)\neq \varnothing$.
	\end{enumerate}
\end{defn}

Lemmas \ref{lem: expected number of unstable points is small}, \ref{lem: entropy of unstable points is small}, \ref{lem: expected number of unstable visits is small}, \ref{lem: entropy of unstable visits is small} and \ref{lem: unstable increments have small entropy} below are all stated in the following context: let $\mu$ be a probability measure on $A\wr B$. Consider a sequence $\{\mu_k\}_{k\ge 1}$ of probability measures on $A\wr B$ such that $\mu_k(g)\xrightarrow[k\to \infty]{}\mu(g)$ for every $g\in A\wr B$. Denote by $\pi:A\wr B\to B$ the canonical epimorphism. Suppose that 
\begin{itemize}
	\item $\lim_{k\to \infty}\pesc{\pi_{*}\mu_k}=\pesc{\pi_{*}\mu}$
	\item  $\pi_{*}\mu$-random walk on $B$ is transient, and
	\item $\langle \supp{\pi_{*}\mu}\rangle_{+}$ is symmetric.
\end{itemize}
where we recall that $\pesc{\pi_{*}\mu}$ denotes the probability that the $\pi_{*}\mu$-random walk on $B$ never returns to the identity element $e_B\in B$. We will not repeat these hypotheses in the statements of these lemmas in order to make the exposition easier to read.

\begin{lem}\label{lem: expected number of unstable points is small} For each $\varepsilon>0$, every $t_0\ge 1$ and any finite subset $F\subseteq B$ with $e_{B}\in F$, there exist $ K, n_0\ge 1$ such that for all $k\ge K$ we have
	\[
	\E_{\mu_{k}}\Big(|\mathcal{U}_n(n_0,t_0,F)|\Big)<\varepsilon n, \text{ for all }n\ge n_0t_0.
	\]
\end{lem}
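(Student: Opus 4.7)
The plan is to reduce the claim to an application of Proposition~\ref{prop: uniform decay for return to a finite subset} to the induced random walk $\{X_n\}_{n\ge 0}$ on $B$ with step distribution $\pi_*\mu$, applied to the finite set $FF^{-1}\subseteq B$. All the hypotheses of that proposition are in force: the $\pi_*\mu$-walk is transient, $\langle\supp{\pi_*\mu}\rangle_{+}$ is symmetric, and $\pesc{\pi_*\mu_k}\to\pesc{\pi_*\mu}$.

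First, for each $j\in\{0,1,\ldots,\lfloor n/t_0\rfloor-n_0\}$, I introduce the event
\[
A_j\coloneqq \bigl\{\exists\,\ell>j+n_0:\ X_{jt_0}F\cap X_{\ell t_0}F\neq\varnothing\bigr\}.
\]
If $b\in\mathcal{U}_n(n_0,t_0,F)$, then by Definition~\ref{def: unstable elements and visits to unstable elements} there is a smallest index $j(b)$ in the range above with $b\in X_{j(b)t_0}F$, and for this index the event $A_{j(b)}$ must hold. Since $|X_{jt_0}F|=|F|$ for every $j$, this yields the deterministic bound
\[
|\mathcal{U}_n(n_0,t_0,F)|\le |F|\sum_{j=0}^{\lfloor n/t_0\rfloor-n_0}\mathbf{1}_{A_j}.
\]

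Next, I would use that $X_{jt_0}F\cap X_{\ell t_0}F\neq\varnothing$ is equivalent to $X_{jt_0}^{-1}X_{\ell t_0}\in FF^{-1}$. Writing $Y_i\coloneqq \pi(g_i)$, so that $X_n=Y_1\cdots Y_n$, the increment $X_{jt_0}^{-1}X_{\ell t_0}=Y_{jt_0+1}\cdots Y_{\ell t_0}$ has the same distribution as $X_{(\ell-j)t_0}$, and by independence of increments the sequence $(X_{jt_0}^{-1}X_{(j+m)t_0})_{m\ge 1}$ has the same joint law as $(X_{mt_0})_{m\ge 1}$. Therefore $\P_{\mu_k}(A_j)$ does not depend on $j$, and
\[
\P_{\mu_k}(A_j)=\P_{\pi_*\mu_k}\bigl(\exists\, m>n_0:\ X_{mt_0}\in FF^{-1}\bigr)\le \P_{\pi_*\mu_k}\bigl(\exists\, \tilde m>n_0 t_0:\ X_{\tilde m}\in FF^{-1}\bigr).
\]

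Finally, I would apply Proposition~\ref{prop: uniform decay for return to a finite subset} to the $\pi_*\mu$-walk on $B$ with finite set $FF^{-1}$ and tolerance $\varepsilon/|F|$, obtaining $K,n_0'\ge 1$ such that the last probability is at most $\varepsilon/|F|$ for every $k\ge K$, provided that $n_0 t_0\ge n_0'$. Choosing $n_0\coloneqq \lceil n_0'/t_0\rceil$ and summing the bound on the $A_j$'s gives
\[
\E_{\mu_k}\bigl[|\mathcal{U}_n(n_0,t_0,F)|\bigr]\le |F|\cdot\frac{n}{t_0}\cdot\frac{\varepsilon}{|F|}\le \varepsilon n
\]
for every $k\ge K$ and every $n\ge n_0 t_0$, as required. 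There is no serious obstacle here; the only step that warrants care is the translation-invariance identification $X_{jt_0}^{-1}X_{\ell t_0}\stackrel{d}{=}X_{(\ell-j)t_0}$, which relies on the left-multiplicative convention for sample paths.
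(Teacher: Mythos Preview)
Your proof is correct and follows essentially the same approach as the paper: bound $|\mathcal{U}_n|$ by $|F|$ times the number of indices $j$ for which $X_{jt_0}F$ is revisited more than $n_0$ steps later, use translation invariance of increments to reduce to $j=0$, and then apply Proposition~\ref{prop: uniform decay for return to a finite subset} to the finite set $FF^{-1}$. The only cosmetic differences are that the paper packages the steps into three separate claims and takes $n_0$ directly from the proposition rather than rescaling an auxiliary $n_0'$ by $t_0$.
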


\begin{proof}
	Let us fix $\varepsilon>0$, $t_0\ge 1$ and any finite subset $F\subseteq B$ such that $e_{B}\in F$.
	
	For each $m\ge 1$ and every $j=0,1,\ldots,  \left\lfloor n/t_0\right \rfloor-m$, let us say that $j$ is \emph{poorly $m$-stabilized} if there is some $\ell \ge j+m$ such that $X_{jt_0}F\cap X_{\ell t_0}F\neq \varnothing$.
	
	Below we prove three claims, and then use them to conclude the statement of the lemma.
	\begin{claim}\label{claim: unstable points 1}
		For all $k,m\ge 1$ and $j\ge 0$, we have that
		\[
		\P_{\mu_k}\left( j\text{ is poorly }m\text{-stabilized} \right)\le\P_{\mu_k}\left( 0\text{ is poorly }m\text{-stabilized} \right).
		\]
	\end{claim}
	\begin{proof}
		Indeed, we see that
		\begin{align*}
			\P_{\mu_k}(j \text{ is poorly }m\text{-stabilized})&=\P_{\mu_k}(\text{there is }\ell\ge j+m\text{ such that }X_{jt_0}F\cap X_{\ell t_0}F\neq \varnothing)\\
			&=\P_{\mu_k}(\text{there is }\ell\ge j+m\text{ such that }X_{0}F\cap X_{(\ell-j) t_0}F\neq \varnothing)\\
			&\le\P_{\mu_k}(\text{there is }\ell\ge m\text{ such that }X_{0}F\cap X_{\ell t_0}F\neq \varnothing)\\
			&=\P_{\mu_k}(0 \text{ is poorly }m\text{-stabilized}).
	\end{align*}\end{proof}
	\begin{claim}\label{claim: unstable points 2}
		For all $k,t_0\ge 1$ and $m_1\ge m_2\ge 1$, we have that
		\[
		\P_{\mu_k}\left( 0\text{ is poorly }m_1\text{-stabilized} \right)\le \P_{\mu_k}\left( 0\text{ is poorly }m_2\text{-stabilized} \right).
		\]
	\end{claim}
	\begin{proof}
		This is a consequence of the fact that if $m_1\ge m_2$, then the event where $0$ is poorly $m_1$-stabilized is contained in the event where $0$ is poorly $m_2$-stabilized.
	\end{proof}
	\begin{claim}\label{claim: unstable points 3}
		There exist $K,n_0\ge 1$ such that for all $k\ge K$, all $m\ge n_0$ and any $j\ge 0$ we have
		\(
		\P_{\mu_k}\left(j \text{ is poorly }m\text{-stabilized} \right)< \frac{\varepsilon}{|F|}.
		\)
	\end{claim}
	\begin{proof}
		Using Proposition \ref{prop: uniform decay for return to a finite subset}, we can find $K, n_0\ge 1$ such that, for every $k\ge K$, we have 
		\begin{equation*}\label{eq: uniform decay to finite subset}
			\P_{\mu_k}\left(\exists \ \ell>n_0 \text{ such that }X_{\ell}\in FF^{-1} \right)<\frac{\varepsilon}{|F|}.
		\end{equation*}
		Then, for all $m\ge n_0$, $k\ge K$ and $j\ge 0$ we have
		\begin{align*}
			\P_{\mu_k}\left( j\text{ is poorly }m\text{-stabilized} \right)&\le \P_{\mu_k}\left( 0\text{ is poorly }m\text{-stabilized} \right)\\
			&=		\P_{\mu_k}\left( \text{there is }\ell\ge m \text{ such that }F \cap X_{\ell t_0}F\neq \varnothing \right)\\
			&=		\P_{\mu_k}\left( \text{there is }\ell\ge m \text{ such that } X_{\ell t_0}\in FF^{-1} \right)\\
			&\le	\P_{\mu_k}\left( \text{there is }\ell\ge n_0 \text{ such that } X_{\ell}\in FF^{-1} \right)\\
			&<\frac{\varepsilon}{|F|}.
		\end{align*}
	\end{proof}
	
	Let us now continue with the proof of the lemma. Note that the size of $\mathcal{U}_n(n_0,t_0,F)$ is at most the number of poorly $n_0$-stabilized instants, times the size of $F$. Then, for each $k\ge K$, we obtain
	\begin{align*}
		\E_{\mu_k}\left(|\mathcal{U}_n(n_0,t_0,F)|\right)&\le |F|\left( \sum_{j=0}^{\left\lfloor n/t_0\right \rfloor-n_0} \mathds{1}_{\left\{\exists \ell >j+n_0 \text{ s.t. }X_{jt_0}F\cap X_{mt_0}F\neq \varnothing \right\}} \right)\\
		&\le|F|\sum_{j=0}^{\left\lfloor n/t_0\right \rfloor-n_0}\P_{\mu_k}\left(\exists \ell>j+n_0 \text{ s.t. }X_{jt_0}F\cap X_{\ell t_0}F\neq \varnothing  \right)\\
		&\le|F|\sum_{j=0}^{\left\lfloor n/t_0\right \rfloor-n_0}\P_{\mu_k}\left(j\text{ is poorly }n_0\text{-stabilized} \right)\\
		&\le |F|\left(\lfloor n/t_0\rfloor -n_0+1 \right)\frac{\varepsilon}{|F|}\\
		&\le \varepsilon n,
	\end{align*}
	which concludes the proof of the lemma.
\end{proof}

The upper bound that we obtained for the expectation of the number unstable points gives an upper bound for the entropy of the set. The proof of the following result is completely analogous to the proof of \cite[Lemma 4.5]{FrischSilva2024}, which is a consequence of a general well-known entropy estimate (see \cite[Lemma 4.4]{FrischSilva2024} for the precise statement we use together with a self-contained proof).
\begin{lem}\label{lem: entropy of unstable points is small}
	Let $t_0\ge 1$ and consider any finite subset $F\subseteq B$ with $e_{B}\in F$ . Then for every $\varepsilon>0$, there exists $K,n_0\ge 1$ such that for all $k\ge K$ and $n>n_0t_0$ we have $H_{\mu_k}(\mathcal{U}_n(n_0,t_0,F)\mid \mathcal{P}_n^{t_0})<\varepsilon n.$
\end{lem}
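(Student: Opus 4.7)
The plan is to deduce the conditional entropy bound from the expected-size estimate of Lemma~\ref{lem: expected number of unstable points is small} via a standard entropy inequality for a random subset of a controlled finite ambient set. Concretely, once $\mathcal{P}_n^{t_0}=(X_{t_0},X_{2t_0},\ldots,X_{\lfloor n/t_0\rfloor t_0})$ is fixed, the set $\mathcal{U}_n(n_0,t_0,F)$ is a subset of the (now deterministic) set $S_n\coloneqq\bigcup_{j=0}^{\lfloor n/t_0\rfloor} X_{jt_0}F$, whose cardinality is at most $M\coloneqq (\lfloor n/t_0\rfloor+1)|F|\le Cn$, where $C\coloneqq |F|/t_0+|F|$ does not depend on $n$ or on the step distribution. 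This reduces the problem to a purely combinatorial entropy estimate.

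The key inequality I would invoke is the general bound: for any random subset $U$ of a (possibly random) ambient set of size at most $M$ which is determined by a $\sigma$-algebra $\mathcal{F}$, one has
\[
H(U\mid\mathcal{F})\le \log(M+1)+\E[|U|]\log\!\left(\frac{eM}{\E[|U|]}\right),
\]
with the convention $0\log 0=0$. This follows from the decomposition $H(U\mid\mathcal{F})\le H(|U|\mid\mathcal{F})+H(U\mid |U|,\mathcal{F})$, the estimate $H(|U|\mid\mathcal{F})\le \log(M+1)$, the bound $H(U\mid |U|,\mathcal{F})\le \E[\log\binom{M}{|U|}]\le \E[|U|\log(eM/|U|)]$ via $\log\binom{M}{s}\le s\log(eM/s)$, and Jensen's inequality applied to the concave function $x\mapsto x\log(eM/x)$. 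This is precisely the statement of \cite[Lemma 4.4]{FrischSilva2024}. Applied to $U=\mathcal{U}_n(n_0,t_0,F)$ and $\mathcal{F}=\sigma(\mathcal{P}_n^{t_0})$, it yields
\[
H_{\mu_k}(\mathcal{U}_n(n_0,t_0,F)\mid \mathcal{P}_n^{t_0})\le \log(Cn+1)+\E_{\mu_k}[|\mathcal{U}_n(n_0,t_0,F)|]\log\!\left(\frac{eCn}{\E_{\mu_k}[|\mathcal{U}_n(n_0,t_0,F)|]}\right).
\]

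The remainder is a careful choice of parameters. Given $\varepsilon>0$, I would first pick $\delta\in(0,1)$ small enough that $\delta\log(eC/\delta)<\varepsilon/2$; then invoke Lemma~\ref{lem: expected number of unstable points is small} with $\delta$ in place of $\varepsilon$ to obtain $K,n_0\ge 1$ such that $\E_{\mu_k}[|\mathcal{U}_n(n_0,t_0,F)|]<\delta n$ for all $k\ge K$ and $n\ge n_0 t_0$; and finally enlarge $n_0$, if needed, so that $\log(Cn+1)<\varepsilon n/2$ for all $n>n_0 t_0$. Enlarging $n_0$ is harmless, since it only strengthens the conclusion of Lemma~\ref{lem: expected number of unstable points is small}. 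Substituting into the displayed inequality produces the desired bound $H_{\mu_k}(\mathcal{U}_n(n_0,t_0,F)\mid \mathcal{P}_n^{t_0})<\varepsilon n$. The only subtle point is that the expected-size bound must scale linearly in $n$ (not merely sub-linearly) in order to absorb the factor $\log(eM/\E[|U|])$, whose denominator would otherwise drive the bound up; fortunately this linear scaling is exactly what Lemma~\ref{lem: expected number of unstable points is small} provides.
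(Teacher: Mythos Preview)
Your proposal is correct and follows essentially the same approach as the paper: the paper's proof simply refers to \cite[Lemma 4.5]{FrischSilva2024} as analogous and cites \cite[Lemma 4.4]{FrischSilva2024} for the key entropy estimate, which is exactly the inequality you invoke. Your observation that enlarging $n_0$ only shrinks $\mathcal{U}_n(n_0,t_0,F)$ (since the condition $\ell>j+n_0$ becomes more restrictive) correctly justifies the final parameter adjustment.
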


\begin{lem}\label{lem: expected number of unstable visits is small} For each $\varepsilon>0$, any $t_0\ge 1$ and every finite subset $F\subseteq B$ with $e_{B}\in F$, there exist $K, n_0\ge 1$ such that for all $k\ge K$ we have
	\[
	\E_{\mu_{k}}\Big(|\mathcal{V}_n(n_0,t_0,F)|\Big)<\varepsilon n, \text{ for all }n>n_0 t_0.
	\]
\end{lem}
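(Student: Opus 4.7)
The plan is to follow the same strategy as in the proof of Lemma \ref{lem: expected number of unstable points is small}, but with a sharper control on the probability that a given index $j$ corresponds to a visit. The central observation is the following deterministic fact: if $j\in \mathcal{V}_n(n_0,t_0,F)$, then $X_{jt_0}F$ contains an $(n_0,t_0,F)$-unstable point $b\in X_{j't_0}F\cap X_{\ell't_0}F$ with $\ell'>j'+n_0$. Since $\ell'-j'>n_0$, the three indices $j,j',\ell'$ cannot all lie within $n_0/2$ of each other; consequently there exists some $\ell\in\{j',\ell'\}\subseteq\{0,1,\ldots,\lfloor n/t_0\rfloor\}$ with $|\ell-j|>n_0/2$ and $X_{jt_0}F\cap X_{\ell t_0}F\ne\varnothing$. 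Writing $F'\coloneqq FF^{-1}$ (a finite symmetric subset of $B$ containing $e_B$), this last condition is equivalent to $X_{jt_0}^{-1}X_{\ell t_0}\in F'$.

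Therefore I would bound
\[
\P_{\mu_k}(j\in\mathcal{V}_n)\le \P_{\mu_k}\bigl(\exists\,\ell>j+n_0/2:\ X_{jt_0}^{-1}X_{\ell t_0}\in F'\bigr)+\P_{\mu_k}\bigl(\exists\,0\le\ell<j-n_0/2:\ X_{jt_0}^{-1}X_{\ell t_0}\in F'\bigr).
\]
For the first term, the independence of the increments of the $\pi_{*}\mu_k$-random walk on $B$ shows that $X_{jt_0}^{-1}X_{\ell t_0}$ has the same distribution as $X_{(\ell-j)t_0}$ and, more strongly, the event only involves increments after time $jt_0$, so the first probability equals $\P_{\pi_{*}\mu_k}\bigl(\exists\,\ell'>n_0/2:\ X_{\ell't_0}\in F'\bigr)\le \P_{\pi_{*}\mu_k}\bigl(\exists\,m>n_0t_0/2:\ X_m\in F'\bigr)$. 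For the second term, writing out $X_{jt_0}^{-1}X_{\ell t_0}=g_{jt_0}^{-1}g_{jt_0-1}^{-1}\cdots g_{\ell t_0+1}^{-1}$ shows that, as $\ell$ ranges from $j$ down to $0$, this process has the same law as a random walk on $B$ with step distribution $\check{\pi_{*}\mu_k}$. Hence the second probability equals $\P_{\check{\pi_{*}\mu_k}}\bigl(\exists\,\ell'>n_0/2:\ X'_{\ell't_0}\in F'\bigr)\le \P_{\check{\pi_{*}\mu_k}}\bigl(\exists\,m>n_0t_0/2:\ X'_m\in F'\bigr)$.

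Next I would apply Proposition \ref{prop: uniform decay for return to a finite subset} twice, to the pair $(\pi_{*}\mu,\{\pi_{*}\mu_k\}_{k\ge 1})$ and to the pair $(\check{\pi_{*}\mu},\{\check{\pi_{*}\mu_k}\}_{k\ge 1})$, with the finite set $F'$. The hypotheses are satisfied in both cases: the symmetry of $\langle\supp{\pi_{*}\mu}\rangle_{+}$ is preserved under reflection; transience is equivalent for $\mu$ and $\check\mu$ since $\mu^{*n}(e_B)=\check\mu^{*n}(e_B)$; and Lemma \ref{lem: expected number of visits is 1 over 1-return prob} together with the hypothesis $\lim_k\pesc{\pi_{*}\mu_k}=\pesc{\pi_{*}\mu}$ gives $\lim_k\pesc{\check{\pi_{*}\mu_k}}=\pesc{\check{\pi_{*}\mu}}$. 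Applied with the target accuracy $\varepsilon t_0/4$, the proposition yields integers $K,N_0\ge 1$ such that both tail probabilities above are less than $\varepsilon t_0/4$ for every $k\ge K$, provided $n_0 t_0/2>N_0$. Choosing $n_0\coloneqq \lceil 2N_0/t_0\rceil+1$ makes this hold, and then $\P_{\mu_k}(j\in\mathcal{V}_n)<\varepsilon t_0/2$ uniformly in $j$.

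Finally, a union bound over $j\in\{0,1,\ldots,\lfloor n/t_0\rfloor-n_0\}$ gives
\[
\E_{\mu_k}\bigl(|\mathcal{V}_n(n_0,t_0,F)|\bigr)\le \sum_{j=0}^{\lfloor n/t_0\rfloor-n_0}\P_{\mu_k}(j\in\mathcal{V}_n)\le \bigl\lfloor n/t_0\bigr\rfloor\cdot \tfrac{\varepsilon t_0}{2}\le \tfrac{\varepsilon}{2}n<\varepsilon n,
\]
as required. The only non-routine step is handling the ``backward'' tail, which is resolved by the time-reversal identification with the $\check{\pi_{*}\mu_k}$-walk; the rest is bookkeeping and a direct invocation of Proposition \ref{prop: uniform decay for return to a finite subset}.
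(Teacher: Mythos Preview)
Your argument is correct and takes a genuinely different route from the paper's proof. The paper introduces the auxiliary set $J_{n_0}$ of indices $j$ whose $FF^{-1}$-neighborhood is revisited at least $n_0$ times in the future, decomposes $\mathcal{V}_n$ into $\mathcal{V}_n\cap J_{n_0}$ and $\mathcal{V}_n\setminus J_{n_0}$, bounds the first piece directly via Proposition~\ref{prop: uniform decay for return to a finite subset}, and controls the second piece by a counting argument that reduces it to $\E_{\mu_k}(|\mathcal{U}_n|)$ and hence to the already-proved Lemma~\ref{lem: expected number of unstable points is small}. Your proof instead observes the clean combinatorial fact that a visit at $j$ forces some index $\ell$ at distance $>n_0/2$ from $j$ with $X_{jt_0}^{-1}X_{\ell t_0}\in FF^{-1}$, and then handles the forward and backward cases by applying Proposition~\ref{prop: uniform decay for return to a finite subset} to the walk and to its time-reversal $\check{\pi_*\mu}$. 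This is more direct and avoids the dependence on Lemma~\ref{lem: expected number of unstable points is small}; the price is the extra (but short) verification that the hypotheses of Proposition~\ref{prop: uniform decay for return to a finite subset} transfer to the reflected measures, which you carry out correctly using $\check\mu^{*n}(e_B)=\mu^{*n}(e_B)$ and Lemma~\ref{lem: expected number of visits is 1 over 1-return prob}. Two tiny wording points that do not affect validity: where you write that the forward probability ``equals'' $\P_{\pi_*\mu_k}(\exists\,\ell'>n_0/2:X_{\ell' t_0}\in F')$, it is in fact an upper bound (you have silently dropped the constraint $\ell\le\lfloor n/t_0\rfloor$); and likewise for the backward term the reversed walk only runs up to time $jt_0$, so again the identification gives an upper bound. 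Both inequalities go in the direction you need.
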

\begin{proof}
	Consider $\varepsilon>0$, $t_0\ge 1$ and $F\subseteq B$ a finite subset with $e_{B}\in F$. For every $m\ge 1$ let us define
	\begin{align*}
		J_{m}\coloneqq \{j\in \mathbb{Z}_{\ge 0}\mid \text{ there exist }n_1>n_2>\ldots> n_m>&j \text{ such that }\\
		&X_{n_\ell t_0}\in X_{jt_0}FF^{-1}, \text{ for }\ell=1,2,\ldots,m \}.
	\end{align*}
	That is, the set $J_{m}$ is formed by the instants $j\ge 0$ such that the neighborhood $X_{jt_0}FF^{-1}$ is visited at least $m$ more times in the future by the random walk on the base group $B$ at instants multiple of $t_0$.
	\begin{claim}\label{claim: visits 1}
		For every $j\ge 0$ and $m\ge 1$ we have $\P_{\mu_k}(j\in J_{m})\le \P_{\mu_k}(0\in J_{m})$.
	\end{claim}
	\begin{proof}
		Indeed, we can write 
		\begin{align*}
			J_{m}\coloneqq \{j\in \mathbb{Z}_{\ge 0}\mid \text{ there exist }n_1>n_2>&\ldots> n_m>j \text{ such that }\\
			&X_{jt_0}^{-1}X_{n_\ell t_0}\in FF^{-1}, \text{ for }\ell=1,2,\ldots,m \},
		\end{align*}
		so that the claim follows from the Markov property.
	\end{proof}
	\begin{claim}\label{claim: visits 2}
		For every $m_1\ge m_2\ge 1$ we have $\P_{\mu_k}(0\in J_{m_1})\le \P_{\mu_k}(0\in J_{m_2})$.
	\end{claim}
	\begin{proof}	
		This follows from the fact that, whenever the $FF^{-1}$-neighborhood of $e_B$ is visited at least $m_1$ times, then it is also visited at least $m_2$ times.
	\end{proof}
	\begin{claim}\label{claim: visits 3}
		There are $K_1,n_0\ge 1$ such that for all $k\ge K_1$ and every $m\ge n_0$ we have
		\[
		\P_{\mu_k}(0\in J_{m})<\frac{\varepsilon}{2(t_0+1)}.
		\]
	\end{claim}
	\begin{proof}
		Indeed, we know from Proposition \ref{prop: uniform decay for return to a finite subset} that there are $K_1,n_0\ge 1$ such that for every $k\ge K_1$ we have
		$\P_{\mu_k}(\text{ there is }\ell \ge n_0 \text{ such that }X_{\ell}\in FF^{-1})<\frac{\varepsilon}{2(t_0+1)}.$
		
		Then, for every and $k\ge K_1$ and $m\ge n_0$, we get
		\begin{align*}
			\P_{\mu_k}(0\in J_{m})&\le \P_{\mu_k}(FF^{-1}\text{ is visited at least }m \text{ times in the future})\\
			&\le \P_{\mu_k}(\exists \ \ell\ge m\text{ such that }X_{\ell}\in FF^{-1})\\
			&\le \P_{\mu_k}(\exists \ \ell\ge n_0\text{ such that }X_{\ell}\in FF^{-1})\\
			&<\frac{\varepsilon}{2(t_0+1)}.
		\end{align*}
	\end{proof}
	\begin{claim}\label{claim: visits 4}
		For every $k\ge 1$ and $n>n_0t_0$ we have \[\E_{\mu_k}(|\mathcal{V}_n(n_0,t_0,F)\backslash J_{n_0}|)\le n_0 |F| \E_{\mu_k}(|\mathcal{U}_n(n_0,t_0,F)|).\]
	\end{claim}
	\begin{proof}
		Let us denote $\mathcal{U}_n\coloneqq\mathcal{U}_n(n_0,t_0,F)$ and $\mathcal{V}_n\coloneqq\mathcal{V}_n(n_0,t_0,F)$. Then, we have
		\begin{align*}
			\E_{\mu_k}\left( |\mathcal{V}_n\backslash J_{n_0}| \right)&\le\E_{\mu_k}\left( \sum_{j=0}^{\lfloor n/t_0\rfloor-n_0}\mathds{1}_{\{X_{jt_0}F\cap \mathcal{U}_n\neq \varnothing\}}\mathds{1}_{\left\{\substack{ X_{jt_0}FF^{-1}\text{ is visited }< n_0 \\ \text{ times after instant }jt_0} \right\} } \right)\\		
			&=\E_{\mu_k}\left(\sum_{u\in \mathcal{U}_n}\sum_{g\in F} \sum_{j=0}^{\lfloor n/t_0\rfloor-n_0}\mathds{1}_{\{X_{jt_0}=ug^{-1}\}}\mathds{1}_{\left\{\substack{  X_{jt_0}FF^{-1}\text{ is visited }< n_0 \\ \text{ times after instant }jt_0} \right\} }\right)\\
			&\le\E_{\mu_k}\left(| \mathcal{U}_n| |F|n_0\right)\\
			&=n_0 |F| \E_{\mu_k}(| \mathcal{U}_n|),
		\end{align*}
		which proves the claim.
	\end{proof}
	
	We now use Lemma \ref{lem: expected number of unstable points is small} to find $K_2\ge K_1$ such that for all $k\ge K_2$ and every $n\ge n_0t_0$ we have 	$\E_{\mu_{k}}\Big(|\mathcal{U}_n(n_0,t_0,F)|\Big)<\frac{\varepsilon}{2n_0|F|} n.$ Combining this with Claims \ref{claim: visits 3} and \ref{claim: visits 4}, we obtain that for all $k\ge K_2$ and any $n>n_0t_0$ we have
	\begin{align*}
		\E_{\mu_k}\left( |\mathcal{V}_n| \right)&=\E_{\mu_k}\left( |\mathcal{V}_n\backslash J_{n_0}| \right)+\E_{\mu_k}\left( |J_{n_0}| \right)\\
		&\le   n_0 |F| \E_k\left(|\mathcal{U}_n|\right)+\sum_{j=0}^{t_0}\P_{\mu_k}\left(j \in J_{n_0}\right)\\
		&<  n_0 |F|\frac{\varepsilon}{2n_0|F|} n+(t_0+1)\P_{\mu_k}\left(0 \in J_{n_0}\right)\\
		&\le \frac{\varepsilon}{2}+ (t_0+1)\frac{\varepsilon}{2(t_0+1)}=\varepsilon.
	\end{align*}
	This finishes the proof of the lemma.
\end{proof}

\begin{lem}\label{lem: entropy of unstable visits is small} For every $\varepsilon>0$ and $n_0\ge 1$, there is $T\ge 1$ such that for all $k\ge 1$, all $t_0\ge T$ and any finite subset $F\subseteq B$ with $e_{B}\in F$, we have
	\[
	H_{\mu_{k}}\Big(\mathcal{V}_n(n_0,t_0,F))<\varepsilon n, \text{ for all }n> n_0t_0.
	\]
\end{lem}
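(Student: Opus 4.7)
The key observation is that $\mathcal{V}_n(n_0,t_0,F)$ is, by definition, a subset of the index set $\{0,1,\ldots,\lfloor n/t_0\rfloor - n_0\}$, and therefore takes at most $2^{\lfloor n/t_0\rfloor - n_0 + 1}$ distinct values, regardless of $k$, of $F$, or of the underlying measure. Unlike Lemma \ref{lem: expected number of unstable visits is small}, which required probabilistic input via Proposition \ref{prop: uniform decay for return to a finite subset}, the present bound is purely combinatorial: the range of the random variable $\mathcal{V}_n$ has size $O(n/t_0)$, so large $t_0$ alone suffices to suppress the entropy.

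Concretely, I would apply the standard inequality $H(X) \le \log |\mathrm{range}(X)|$ valid for any random variable with finite range, obtaining
\[
H_{\mu_k}\bigl(\mathcal{V}_n(n_0,t_0,F)\bigr) \le \bigl(\lfloor n/t_0\rfloor - n_0 + 1\bigr)\log 2 \le \bigl(n/t_0 + 1\bigr)\log 2.
\]
Since the hypothesis $n > n_0 t_0 \ge t_0$ gives $n/t_0 > 1$, and hence $n/t_0 + 1 < 2n/t_0$, we conclude that
\[
H_{\mu_k}\bigl(\mathcal{V}_n(n_0,t_0,F)\bigr) \le \frac{2\log 2}{t_0}\, n.
\]

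The proof is then finished by choosing any integer $T > 2\log 2 / \varepsilon$: for every $t_0 \ge T$, the right hand side is strictly less than $\varepsilon n$. Since neither the step distribution $\mu_k$ nor the finite set $F$ appears in the resulting estimate, this bound is automatically uniform over all $k \ge 1$ and over every admissible choice of $F \subseteq B$. There is no serious obstacle here; the substantive point is that the ambient index set is already of controlled size $\lfloor n/t_0\rfloor + 1$, so none of the transience or return-probability machinery developed for Lemmas \ref{lem: expected number of unstable points is small}--\ref{lem: expected number of unstable visits is small} is required for this particular entropy bound.
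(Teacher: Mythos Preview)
Your proof is correct and follows essentially the same approach as the paper's own proof: both exploit the purely combinatorial bound that $\mathcal{V}_n(n_0,t_0,F)$ ranges over subsets of $\{0,1,\ldots,\lfloor n/t_0\rfloor - n_0\}$, giving entropy at most $(\lfloor n/t_0\rfloor - n_0 + 1)\log 2$, and then choose $T$ large enough. The paper drops the term $-n_0+1\le 0$ directly, whereas you bound it above by $1$ and then absorb the resulting $+1$ using $n>t_0$; these are minor arithmetic variations on the same argument.
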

\begin{proof}
	Consider any values of $\varepsilon>0$ and $n_0\ge 1$. The set $\mathcal{V}_n(n_0,t_0,F)$ is a subset of $\{0,1,\ldots, \lfloor n/t_0\rfloor -n_0\}$, so that \[H_{\mu_{k}}\Big(\mathcal{V}_n(n_0,t_0,F))\le \left( \lfloor n/t_0\rfloor -n_0+1\right)\log\left( 2 \right).\]
	
	Choose $T=\frac{\varepsilon}{\log(2)}$. Then, for every $t_0\ge T$, any finite subset $F\subseteq B$ with $e_{B}\in F$ and any $k\ge 1$, we obtain 
	\begin{equation*}
		H_{\mu_{k}}\Big(\mathcal{V}_n(n_0,t_0,F))\le \left( \frac{n}{t_0} -n_0+1\right)\log\left( 2 \right)\le   \left( \frac{n}{T}\right)\log\left( 2 \right)=\varepsilon n,
	\end{equation*}
	which concludes the proof of the lemma.
\end{proof}

\subsection{Increments at unstable points}\label{subsection: unstable increments}
In this subsection we will prove that, at the cost of adding a small amount of entropy to our process, we can reveal all the lamp increments done at positions that are visited by the random walk on the base group at distant time instants.
\begin{defn}\label{defn: unstable increments} Consider $n_0,t_0\ge 1$, finite subsets $L\subseteq A$ and $R\subseteq B$ with $e_A\in L$ and $e_B\in R$, and $n>n_0t_0$. We define $\Delta_n:\mathcal{U}_n(n_0,t_0,R^{t_0})\times\{1,2,\ldots ,\lfloor n/t_0\rfloor \} \to A$ such that, for each $j=1,2,\ldots,\lfloor n/t_0\rfloor$ and every $b\in \mathcal{U}_n(n_0,t_0,R^{t_0})$, the value $\Delta_n(b,j)$ is given by
	\[
	\Delta_n(b,j)\coloneqq \begin{cases}
		\varphi_{(j-1)t_0}(b)^{-1}\varphi_{jt_0}(b), &\text{ if }I_j\text{ is an }(R,L)\text{-good interval, and}\\
		*, &\text{ otherwise.}
	\end{cases}
	\]
	We call $\Delta_n(b,j)$ the \emph{unstable increment at $b$ during the interval $I_j$}. 
\end{defn}

\begin{lem}\label{lem: unstable increments have small entropy}
	Let $t_0\ge 1$ and consider any finite subsets $L\subseteq A$ and $R\subseteq B$ with $e_A\in A$ and $e_B\in R$. Then for every $\varepsilon>0$ there are $K,n_0\ge 1$ such that for every $k\ge K$ and any $n> n_0t_0$ we have
	\[
	H_{\mu_k}\left(\Delta_n\mid \mathcal{P}_n^{t_0}\vee \beta_n(t_0,L,R) \vee \mathcal{U}_n(n_0,t_0,R^{t_0})\vee \mathcal{V}_n(n_0,t_0,R^{t_0})\right)<\varepsilon n.
	\]
\end{lem}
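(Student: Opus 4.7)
The plan is to exploit the fact that, once we condition on $\mathcal{P}_n^{t_0}\vee\beta_n\vee\mathcal{U}_n\vee\mathcal{V}_n$, the value $\Delta_n(b,j)$ is forced to be trivial ($e_A$ or $*$) for an overwhelming majority of pairs $(b,j)$. Write $F:=R^{t_0}$ and let $\rho_n:=\mathcal{P}_n^{t_0}\vee\beta_n(t_0,L,R)\vee\mathcal{U}_n(n_0,t_0,F)\vee\mathcal{V}_n(n_0,t_0,F)$. First I will establish two deterministic structural observations, both readable from $\rho_n$ alone: if $I_j$ is $(L,R)$-bad (known from $\beta_n$) then $\Delta_n(\cdot,j)\equiv*$; and if $I_j$ is $(L,R)$-good then every increment of the walk in $I_j$ has its $B$-coordinate in $R$ and its lamp part supported in $R$ with values in $L$, so during $I_j$ the walk stays inside $X_{(j-1)t_0}F$, modifies only lamps at positions in $X_{(j-1)t_0}F$, and each individual modification is an element of $L$. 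In particular $\Delta_n(b,j)=e_A$ as soon as $b\notin X_{(j-1)t_0}F$, and otherwise $\Delta_n(b,j)$ lies in the product set $L^{t_0}\subseteq A$, of cardinality at most $|L|^{t_0}$.

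Next I will split the indices $j\in\{1,\ldots,\lfloor n/t_0\rfloor\}$ into ``in-range'' ones with $j-1\le \lfloor n/t_0\rfloor-n_0$ and at most $n_0$ ``boundary'' ones with $j-1>\lfloor n/t_0\rfloor-n_0$. For an in-range good $j$, either $j-1\notin\mathcal{V}_n$, in which case $X_{(j-1)t_0}F\cap\mathcal{U}_n=\varnothing$ by the very definition of $\mathcal{V}_n$ and all entries $\Delta_n(b,j)$ with $b\in\mathcal{U}_n$ are forced to $e_A$; or $j-1\in\mathcal{V}_n$, in which case at most $|\mathcal{U}_n\cap X_{(j-1)t_0}F|\le|F|$ entries can be nontrivial. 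For each boundary $j$ the crude bound of $|F|$ possibly-nontrivial entries still applies. Combining, the number of values that $\Delta_n$ can take conditional on $\rho_n$ is at most $|L|^{t_0|F|(|\mathcal{V}_n|+n_0)}$, which after taking logs and expectations yields
\[
H_{\mu_k}(\Delta_n\mid\rho_n)\le t_0\,|F|\log|L|\cdot\bigl(\E_{\mu_k}[|\mathcal{V}_n(n_0,t_0,F)|]+n_0\bigr).
\]

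To finish, I will apply Lemma \ref{lem: expected number of unstable visits is small} with tolerance $\varepsilon/(2t_0|F|\log|L|)$ to obtain $K\ge 1$ and an initial $n_0^{(0)}\ge 1$ such that $\E_{\mu_k}[|\mathcal{V}_n(n_0^{(0)},t_0,F)|]<\varepsilon n/(2t_0|F|\log|L|)$ whenever $k\ge K$ and $n>n_0^{(0)}t_0$; enlarging $n_0^{(0)}$ to $n_0:=\max\bigl(n_0^{(0)},\lceil 2n_0^{(0)}|F|\log|L|/\varepsilon\rceil\bigr)$ then absorbs the additive constant $n_0^{(0)}\cdot t_0|F|\log|L|$ into $\varepsilon n/2$ for every $n>n_0t_0$. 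The main conceptual step is the sparsity analysis in the second paragraph: the whole purpose of the sets $\mathcal{U}_n$ and $\mathcal{V}_n$ is precisely that they detect all in-range intervals during which an unstable point can be affected, so that outside of $\mathcal{V}_n$ the entropy contribution vanishes identically, and the quantitative smallness of $\E_{\mu_k}|\mathcal{V}_n|$ needed to finish is then supplied ready-made by Lemma \ref{lem: expected number of unstable visits is small}.
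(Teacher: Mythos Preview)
Your approach is the same as the paper's, and in places more careful: you correctly include the factor $|F|=|R^{t_0}|$ counting the number of lamp positions that can be touched during a single good interval, and you explicitly separate out the ``boundary'' indices $j$ with $j-1>\lfloor n/t_0\rfloor-n_0$, neither of which the paper's proof does. The structural observations in your first two paragraphs and the resulting bound
\[
H_{\mu_k}(\Delta_n\mid\rho_n)\le t_0\,|F|\log|L|\cdot\bigl(\E_{\mu_k}[|\mathcal{V}_n(n_0,t_0,F)|]+n_0\bigr)
\]
are correct.

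However, your last step is circular. When you enlarge $n_0^{(0)}$ to $n_0$, the number of boundary indices becomes $n_0$, not $n_0^{(0)}$: with parameter $n_0$ in the statement, the boundary indices are those $j$ with $j-1>\lfloor n/t_0\rfloor-n_0$, so the additive term in your displayed inequality is $t_0|F|\log|L|\cdot n_0$, not $t_0|F|\log|L|\cdot n_0^{(0)}$ as you write. Requiring $n>n_0 t_0$ would then give $t_0|F|\log|L|\cdot n_0<\varepsilon n/2$ only if $|F|\log|L|<\varepsilon/2$, which fails for small $\varepsilon$. (The paper's proof has the same gap, concealed by not isolating the boundary case and by omitting the factor $|F|$.) The easiest repair is to allow an additive constant $C$ in the conclusion---this is harmless, since in the only application, Proposition~\ref{prop: entropy lamps inside}, one works with $n>Nt_0$ for $N>n_0$ chosen freely, so any constant is eventually absorbed---or equivalently to output the parameter $n_0$ used in $\mathcal{U}_n,\mathcal{V}_n$ together with a separate, larger threshold for the range of $n$.
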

\begin{proof}
	From Lemma \ref{lem: expected number of unstable visits is small}, we can find $K,n_0\ge 1$ such that, for all $k\ge K$  and $n>n_0t_0$, we have
	\[
	\E_{\mu_k}\left(\left|\mathcal{V}_n\right| \right)<\frac{\varepsilon n}{\log(|L^{t_0}|+1)} .
	\]
	
	In order to simplify the notation, let us denote $\beta_n\coloneqq \beta_n(t_0,L,R)$, $\mathcal{U}_n\coloneqq \mathcal{U}_n(n_0,t_0,R^{t_0})$ and $\mathcal{V}_n\coloneqq \mathcal{V}_n(n_0,t_0,R^{t_0})$.
	
	Recall that, for each $j=1,2,\ldots,\lfloor n/t_0\rfloor$ and $b\in \mathcal{U}_n$, the value of $\Delta_n(b,j)$ is either the symbol $*$ (if $I_j$ is an $(L,R)$-bad interval), or a product of at most $t_0$ elements of $L\subseteq A$. In particular, $\Delta_n(b,j)$ can take at most $|L^{t_0}|+1$ values. Additionally, note that if $\Delta_n(b,j)\neq *$ and $\Delta_n(b,j)\neq e_A$, then the lamp configuration at $b$ was modified during the interval $I_j$, which is an $(L,R)$-good interval. This implies that $j\in \mathcal{V}_n$. Moreover, the information of which intervals are $(L,R)$-good and which intervals are $(L,R)$-bad is contained in the partition $\beta_n$.

	Thus, we have
	\begin{align*}
		H_{\mu_k}\left(\Delta_n\mid \mathcal{P}_n^{t_0}\vee \beta_n\vee \mathcal{U}_n\vee \mathcal{V}_n \right)\le \E\left( |\mathcal{V}_n|\right)\log(|L^{t_0}|+1)<\varepsilon n.
	\end{align*}
\end{proof}
\subsection{The proof of Proposition \ref{prop: entropy lamps inside}}\label{subsection: proof of entropy inside}
We are now ready to prove the main proposition of this section, which estimates the entropy of the lamp configuration inside the $(t_0,R)$-coarse neighborhood $\mathcal{N}_n(t_0,R)$.
\begin{proof}[Proof of Proposition \ref{prop: entropy lamps inside}]
	Let us fix arbitrary finite subsets $L\subseteq A$ and $R\subseteq B$ with $e_A\in L$ and $e_B\in R$. Let $\varepsilon>0$.
	
	By using Lemmas \ref{lem: entropy of unstable points is small}, \ref{lem: entropy of unstable visits is small} and \ref{lem: unstable increments have small entropy} we can find $K\ge 1$, $n_0\ge 1$ and $T\ge 1$ such that for all $k\ge K$, $t_0\ge T$ and $n>n_0t_0$ we have
	
	\begin{enumerate}
		\item $H_{\mu_k}(\mathcal{U}_n\mid \mathcal{P}_n^{t_0})<\frac{\varepsilon}{3}n,$
		\item $H_{\mu_k}(\mathcal{V}_n)<\frac{\varepsilon}{3}n,$ and
		\item $H_{\mu_k}(\Delta_n\mid \mathcal{P}_n^{t_0}\vee \beta_n\vee \mathcal{U}_n\vee \mathcal{V}_n)<\frac{\varepsilon}{3}n$,
	\end{enumerate}

	where we denote $\beta_n\coloneqq\beta_n(t_0,L,R)$, $\mathcal{U}_n\coloneqq \mathcal{U}_n(n_0,t_0,R^{t_0})$, $\mathcal{V}_n\coloneqq \mathcal{V}_n(n_0,t_0,R^{t_0})$. Consider any $N>n_0$ and, in order to simplify the notation below, let us denote
	$$
	\Phi^{\mathrm{in}}_{Nt_0\mathrm{-coarse}}\coloneqq\left(\Phi_{Nt_0}^{\mathrm{in}},\Phi_{2Nt_0}^{\mathrm{in}},\ldots, \Phi_{\left\lfloor \frac{n}{Nt_0} \right \rfloor Nt_0}^{\mathrm{in}}\right).
	$$

	We thus obtain
	\begin{align*}
		H_{\mu_k}\left( \Phi^{\mathrm{in}}_{Nt_0\mathrm{-coarse}}  \Big | \ w_n\vee \mathcal{P}_n^{t_0} \vee \beta_n\right)&\le H_{\mu_k}\left( \Phi^{\mathrm{in}}_{Nt_0\mathrm{-coarse}}  \Big | \ w_n\vee \mathcal{P}_n^{t_0} \vee \beta_n\vee \mathcal{U}_n\vee \mathcal{V}_n\vee \Delta_n\right)+ \\ &\hspace{12pt}+
		H_{\mu_k}\left( \mathcal{U}_n\vee\mathcal{V}_n\vee \Delta_n  \Big | \ w_n\vee \mathcal{P}_n^{t_0} \vee \beta_n\right)\\
		&\le  H_{\mu_k}\left( \Phi^{\mathrm{in}}_{Nt_0\mathrm{-coarse}}  \Big | \ w_n\vee \mathcal{P}_n^{t_0} \vee \beta_n\vee \mathcal{U}_n\vee \mathcal{V}_n\vee \Delta_n\right)+ \\ &\hspace{12pt}+
		H_{\mu_k}\left(\mathcal{U}_n \Big | \mathcal{P}_n^{t_0}\right)+ H_{\mu_k}\left(\mathcal{V}_n\right)+ \\ &\hspace{12pt}+H_{\mu_k}\left(\Delta_n  \Big | \mathcal{P}_n^{t_0}\vee \beta_n\vee \mathcal{U}_n\vee \mathcal{V}_n\right)\\
		&\le  H_{\mu_k}\left( \Phi^{\mathrm{in}}_{Nt_0\mathrm{-coarse}}  \Big | \ w_n\vee \mathcal{P}_n^{t_0} \vee \beta_n\vee \mathcal{U}_n\vee \mathcal{V}_n\vee \Delta_n\right)+ \\ &\hspace{12pt}+\frac{\varepsilon}{3} n + \frac{\varepsilon}{3} n+\frac{\varepsilon}{3} n\\
		&=  H_{\mu_k}\left( \Phi^{\mathrm{in}}_{Nt_0\mathrm{-coarse}}  \Big | \ w_n\vee \mathcal{P}_n^{t_0} \vee \beta_n\vee \mathcal{U}_n\vee \mathcal{V}_n\vee \Delta_n\right)+ \varepsilon n.
	\end{align*}
	
	To conclude the result of the proposition, it suffices to justify that
	\begin{equation*}
		H_{\mu_k}\left( \Phi^{\mathrm{in}}_{Nt_0\mathrm{-coarse}}  \Big | \ w_n\vee \mathcal{P}_n^{t_0} \vee \beta_n\vee \mathcal{U}_n\vee \mathcal{V}_n\vee \Delta_n\right)\le (H(\mu)+1)\frac{nn_0}{N}.
	\end{equation*}
	We will write $\mathcal{Q}_n\coloneqq w_n\vee \mathcal{P}_n^{t_0} \vee \beta_n\vee \mathcal{U}_n\vee \mathcal{V}_n\vee \Delta_n$.

	For each $\ell=1,2,\ldots, \left \lfloor \frac{n}{Nt_0}\right \rfloor $, let us look at the value of $\Phi_{\ell Nt_0}^{\mathrm{in}}$, which corresponds to the lamp configuration at the time instant $\ell Nt_0$ at positions inside the subset \\ $F_{\ell}\coloneqq\bigcup_{j=0}^{\ell N-1}X_{jt_0}R^{t_0}$. We decompose the positions in $F_{\ell}$ into two disjoint subsets as follows:
	
	\begin{enumerate}
		\item Let us first consider elements $b\in F_{\ell}$ such that there is $j\in \{0,1,\ldots, \ell N-n_0-1\}$ for which $b\in X_{jt_0}R^{t_0}$.
		There are now two cases to consider.
		\begin{enumerate}
			\item If $b\notin \mathcal{U}_n$, then this means that any modification to the value of the lamp configuration at $b$ at any instant beyond $jt_0+n_0t_0$ must have occurred during an $(L,R)$-bad interval. Then, the value of $\varphi_{\ell N t_0}(b)$ can be completely determined from the value $\varphi_n(b)$ (which is part of the information of $w_n$), together with the partition of bad increments $\beta_n$.
			\item If $b\in \mathcal{U}_n$, then $\varphi_{\ell N t_0}(b)$ can be recovered by multiplying in the appropriate order the increments in the partition $\beta_n$ and the increments in the partition $\Delta_n$.
		\end{enumerate}
		
		\item We now consider all remaining elements $b\in F_{\ell}$. That is, elements $b\in B$ such that there is $j\in \{\ell N-n_0,\ldots, \ell N\}$ for which $b\in X_{jt_0}R^{t_0}$, and such that for every $j^{\prime}\in \{0,1,\ldots, \ell N-n_0-1\}$ it holds $b\notin X_{j^{\prime}t_0}R^{t_0}$. This last condition means that the value of $\varphi_{(\ell N-n_0) t_0}(b)$ is completely determined by the partition of bad increments $\beta_n$. From this, the value of $\varphi_{\ell N t_0}(b)$ can be obtained by further conditioning on the value of the increments done between instants $(\ell N-n_0) t_0$ and $\ell N t_0$. This corresponds to the group element $w^{-1}_{(\ell N-n_0)t_0}w_{\ell N t_0}$.
	\end{enumerate}
	Then, for each $\ell=1,2,\ldots, \left \lfloor \frac{n}{Nt_0}\right \rfloor$, we have that 
	\begin{align*}
		H_{\mu_k}\left( \Phi^{\mathrm{in}}_{\ell Nt_0}  \Big | \mathcal{Q}_n\right)&\le H_{\mu_k}\left( \Phi^{\mathrm{in}}_{\ell Nt_0}  \Big | \mathcal{Q}_n\vee w^{-1}_{(\ell N-n_0)t_0}w_{\ell N t_0}\right) + H_{\mu_k}(w^{-1}_{(\ell N-n_0)t_0}w_{\ell N t_0})\\
		&=0+H_{\mu_k}(w^{-1}_{(\ell N-n_0)t_0}w_{\ell N t_0})\\
		&\le H(\mu_k)n_0t_0.
	\end{align*}
	Hence, we obtain
	\begin{equation*}
		H_{\mu_k}\left( \Phi^{\mathrm{in}}_{Nt_0\mathrm{-coarse}}  \Big |\mathcal{Q}_n\right)\le \sum_{\ell=1}^{\left\lfloor \frac{n}{Nt_0}\right\rfloor}H_{\mu_k}\left( \Phi^{\mathrm{in}}_{\ell Nt_0}  \Big | \mathcal{Q}_n\right) \le\left\lfloor \frac{n}{Nt_0}\right\rfloor H(\mu_k)n_0t_0\le  H(\mu_k)\frac{nn_0}{N}.
	\end{equation*}
	Finally, since the sequence $\{H(\mu_k)\}_{k\ge 1}$ converges to $H(\mu)$, for each $k$ sufficiently large we will have $H(\mu_k)\le H(\mu)+1$. Thus, we conclude that there  are $K\ge 1$, $n_0\ge 1$ and $T\ge 1$ such that, for all $k\ge K$, $t_0\ge T$, $N>n_0$ and $n>Nt_0$, we have
	
	\begin{align*}
		H_{\mu_k}\left( \Phi^{\mathrm{in}}_{Nt_0\mathrm{-coarse}}  \Big | \ w_n\vee \mathcal{P}_n^{t_0} \vee \beta_n\right)&\le \varepsilon n +(H(\mu)+1)\frac{nn_0}{N},
	\end{align*}
	
	which is what we wanted.
	
\end{proof}
\section{The coarse trajectory on $A\wr B$ and the proof of Theorem \ref{thm: continuity asymptotic entropy wreath prods}}\label{section: proof of the main theorem}
We begin this section by stating the most general version of continuity of entropy on wreath products that we prove in this paper.
\begin{thm}\label{thm: continuity asymptotic entropy wreath prods}
	Let $A$ and $B$ be countable groups and let $\mu$ be a probability measure on $A\wr B\coloneqq \bigoplus_{B}A\rtimes B$ with $H(\mu)<\infty$. Consider a sequence $\{\mu_k\}_{k\ge 1}$ of probability measures on $A\wr B$ with $H(\mu_k)<\infty$ for all $k\ge 1$, and such that 
	\begin{enumerate}[(1)]
		\item\label{item: main thm 1} $\lim_{k\to \infty }\mu_k(g)=\mu(g)$ for each $g \in A\wr B$, and
		\item\label{item: main thm 2} $\lim_{k\to \infty} H(\mu_k)=H(\mu)$.
	\end{enumerate}
	Denote by $\pi:A\wr B\to B$ the canonical epimorphism to the base group $B$. Suppose furthermore that
	\begin{enumerate}[(1)]\setcounter{enumi}{2}
		\item\label{item: main thm 3}  the $\pi_{*}\mu$-random walk on $B$ is transient,
		\item \label{item: main thm 4} $h(\pi_{*}\mu)=0$,
		\item\label{item: main thm 5} $\langle \supp{\pi_{*}\mu}\rangle_{+}$ is symmetric, and
		\item\label{item: main thm 6} $\lim_{k\to \infty}\pesc{\pi_{*}\mu_k}=\pesc{\pi_{*}\mu}$.
	\end{enumerate} Then $\lim_{k\to \infty}h(\mu_k)=h(\mu)$.
\end{thm}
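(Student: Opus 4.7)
The plan is to reduce the theorem to establishing Inequality \eqref{eq:maingoal} from the sketch, and then to take limits in the appropriate order. Hypotheses \eqref{item: main thm 1}--\eqref{item: main thm 2} feed directly into Proposition \ref{prop: upper semicontinuous entropy}, yielding $\limsup_{k\to \infty}h(\mu_k)\le h(\mu)$; it therefore suffices to prove the lower bound $\liminf_{k\to \infty}h(\mu_k)\ge h(\mu)$.

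The key step is to show that, for every $\varepsilon>0$, there exist constants $C>0$ and $K,t_0,N_0\ge 1$ such that, for every $k\ge K$, every multiple $N\ge N_0$ of $t_0$, and every $n$ sufficiently large (depending on $N$), one has
\begin{equation*}
\tfrac{1}{n}H_{\mu_k}(w_n)\ge \Bigl(\tfrac{1}{N}-\tfrac{1}{n}\Bigr)H_{\mu_k}(w_N)-\varepsilon-\tfrac{C}{N}-\tfrac{C}{n}.
\end{equation*}
Given this, successively sending $n\to\infty$ (yielding $h(\mu_k)$ on the left), then $k\to\infty$ (using Lemma \ref{lem: convolutions entropy convergence} so that $H_{\mu_k}(w_N)\to H_\mu(w_N)$), then $N\to\infty$ (so that $H_\mu(w_N)/N\to h(\mu)$), and finally $\varepsilon\to 0$, delivers the desired conclusion.

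To prove the key inequality, I would exploit the independence of increments to write $H_{\mu_k}(\mathcal{P}_n^N(A\wr B))=\lfloor n/N\rfloor H_{\mu_k}(w_N)$, and then combine the basic identity $H(w_n)+H(\mathcal{P}_n^N\mid w_n)=H(\mathcal{P}_n^N)+H(w_n\mid\mathcal{P}_n^N)$ with the trivial inequality $H(w_n\mid\mathcal{P}_n^N)\ge 0$ to deduce
\begin{equation*}
H_{\mu_k}(w_n)\ge \lfloor n/N\rfloor H_{\mu_k}(w_N)-H_{\mu_k}\bigl(\mathcal{P}_n^N(A\wr B)\,\big|\,w_n\bigr).
\end{equation*}
Everything thus reduces to bounding $H_{\mu_k}(\mathcal{P}_n^N(A\wr B)\mid w_n)$ by $\varepsilon n+O(n/N)$. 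Since $t_0$ divides $N$, the tuple $\mathcal{P}_n^N(A\wr B)$ is deterministically recovered from (a) the base-group coarse trajectory $\mathcal{P}_n^{t_0}(B)$, (b) the partition of bad increments $\beta_n(t_0,L,R)$, (c) the lamp-outside restrictions $\Phi^{\mathrm{out}}_{jN}$ for $1\le j\le \lfloor n/N\rfloor$, and (d) the lamp-inside restrictions $\Phi^{\mathrm{in}}_{jN}$ in the same range. Subadditivity of conditional entropy then gives
\begin{align*}
H_{\mu_k}(\mathcal{P}_n^N(A\wr B)\mid w_n) &\le H_{\mu_k}(\mathcal{P}_n^{t_0}(B))+H_{\mu_k}(\beta_n(t_0,L,R))\\
&\quad +H_{\mu_k}\bigl((\Phi^{\mathrm{out}}_{jN})_j\,\big|\,\mathcal{P}_n^{t_0}\vee\beta_n\bigr)+H_{\mu_k}\bigl((\Phi^{\mathrm{in}}_{jN})_j\,\big|\,w_n\vee\mathcal{P}_n^{t_0}\vee\beta_n\bigr),
\end{align*}
and the four summands are controlled, in order, by Lemma \ref{lem: coarse trajectory has small entropy} (which uses $h(\pi_{*}\mu)=0$ from \eqref{item: main thm 4}), Lemma \ref{lem: bad increments have small entropy}, Lemma \ref{lem: entropy lamps outside} (which gives zero), and Proposition \ref{prop: entropy lamps inside} applied with its ``$N$'' equal to $N/t_0$. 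After a careful choice of $t_0$ and of the finite sets $L\subseteq A$, $R\subseteq B$ (and a large enough $K$), the first three contributions become at most $2\varepsilon n+C$ uniformly in $k\ge K$, while Proposition \ref{prop: entropy lamps inside} supplies the last bound $\varepsilon n+(H(\mu)+1)nn_0 t_0/N$, where $n_0$ depends only on $\varepsilon,t_0,L,R$. Dividing by $n$ then yields the target inequality with $C$ absorbing the $O(1)$ additive constant and the factor $(H(\mu)+1)n_0 t_0$.

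The main obstacle will be Proposition \ref{prop: entropy lamps inside}: its proof requires a waiting time $n_0$ that is \emph{uniform} along the sequence $\{\mu_k\}_{k\ge 1}$, beyond which the induced walk on $B$ is unlikely to revisit any prescribed finite set. This uniformity is exactly what the continuity hypothesis \eqref{item: main thm 6} supplies through Proposition \ref{prop: uniform decay for return to a finite subset}, which also uses the transience assumption \eqref{item: main thm 3} and the symmetry \eqref{item: main thm 5} of $\langle\supp{\pi_{*}\mu}\rangle_{+}$. A delicate point that must be respected throughout is the order of quantifiers: one first fixes $\varepsilon$, then $t_0$ and the finite sets $L,R$ (which in turn determine $n_0$), and only after that chooses $N$ large enough so that $n_0 t_0/N$ is a small correction, and $n$ (together with $K$) large enough so that all the preceding estimates hold uniformly for $k\ge K$.
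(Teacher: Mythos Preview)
Your proposal is correct and follows essentially the same approach as the paper. The paper packages the argument into intermediate steps (Lemma \ref{lem: main technical lemma}, Lemma \ref{lem: starting point for iterated wreath products}, and Proposition \ref{prop: final entropy estimates for continuity of entropy on wreath products}) and uses the parameter $Nt_0$ where you write $N$, but the logical structure---upper semicontinuity for the upper bound, the inequality $H_{\mu_k}(w_n)\ge \lfloor n/N\rfloor H_{\mu_k}(w_N)-H_{\mu_k}(\mathcal{P}_n^N(A\wr B)\mid w_n)$, the four-term decomposition of the latter via Lemmas \ref{lem: coarse trajectory has small entropy}, \ref{lem: bad increments have small entropy}, \ref{lem: entropy lamps outside} and Proposition \ref{prop: entropy lamps inside}, and the successive limits $n\to\infty$, $k\to\infty$, $N\to\infty$, $\varepsilon\to 0$---is identical, as is your identification of where each hypothesis enters.
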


Now, we will use the results from Sections \ref{section: entropy estimates} and \ref{section: entropy lamps inside} in order to prove Lemma \ref{lem: main technical lemma} below. It states that, conditioned simultaneously on the value of the random walk on $A\wr B$ at time $n$, the coarse trajectory, and the visits to unstable points, the joint entropy contained in the values of the random walk every $Nt_0$ steps is small. 

\begin{lem}\label{lem: main technical lemma} Let  $\mu$ be a probability measure on $A\wr B$ with $H(\mu)<\infty$, and consider a sequence $\{\mu_k\}_{k\ge 1}$ of probability measure on $A\wr B$ with $H(\mu_k)<\infty$ for all $k\ge 1$. Suppose that $\lim_{k\to \infty}\mu_k(g)=\mu(g)$ for all $g\in A\wr B$ and that $\lim_{k\to \infty}H(\mu_k)=H(\mu)$. Denote by $\pi:A\wr B\to B$ the canonical epimorphism, and let us furthermore suppose that $\lim_{k\to \infty}\pesc{\pi_{*}\mu_k}=\pesc{\pi_{*}\mu}$, that the $\pi_{*}\mu$-random walk on $B$ is transient and that $\langle \supp{\pi_{*}\mu}\rangle_{+}$ is symmetric. Then, for every $\varepsilon>0$ and every finite subsets $L\subseteq A$ and $R\subseteq B$ with $e_A\in L$ and $e_{B}\in R$, there exist $K,n_0,T\ge 1$ such that for all $k\ge K$, $t_0\ge T$, $N>n_0$ and $n> Nt_0$ we have
	\[
	H_{\mu_k}\Big(\mathcal{P}_n^{Nt_0}(A\wr B)\Big \vert w_n\vee \mathcal{P}_n^{t_0}(B)\vee  \beta_n(t_0,L,R) \Big)\le \varepsilon n +(H(\mu)+1)\frac{nn_0}{N}.
	\]
\end{lem}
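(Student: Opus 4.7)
The plan is straightforward: decompose $\mathcal{P}_n^{Nt_0}(A\wr B)$ into its projection on the base group $B$ and its lamp configuration, further split the latter into the restrictions inside and outside the $(t_0,R)$-coarse neighborhood, and then invoke the estimates already established.

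First I would observe that $\mathcal{P}_n^{Nt_0}(A\wr B) = (w_{Nt_0}, w_{2Nt_0}, \ldots, w_{\lfloor n/(Nt_0)\rfloor Nt_0})$ with each $w_{jNt_0} = (\varphi_{jNt_0}, X_{jNt_0})$. Since $N$ is a positive integer and $jN t_0 \le n$ forces $jN \le \lfloor n/t_0\rfloor$, every instant $jNt_0$ of the $Nt_0$-coarse trajectory is also a sampling instant of $\mathcal{P}_n^{t_0}(B)$, so each $X_{jNt_0}$ already appears as a coordinate of $\mathcal{P}_n^{t_0}(B)$. Hence the projection to $B$ contributes zero conditional entropy given $\mathcal{P}_n^{t_0}(B)$, and writing each $\varphi_{jNt_0} = \Phi_{jNt_0}^{\mathrm{in}} \vee \Phi_{jNt_0}^{\mathrm{out}}$ as in Definition \ref{def: entropy in and out lamps}, and setting $Q_n \coloneqq w_n \vee \mathcal{P}_n^{t_0}(B) \vee \beta_n(t_0,L,R)$, the subadditivity of conditional entropy (combining Items \eqref{item: entropy 1} and \eqref{item: entropy 3} of Lemma \ref{lem: basic properties entropy}) yields
\begin{align*}
H_{\mu_k}\bigl(\mathcal{P}_n^{Nt_0}(A\wr B) \mid Q_n\bigr)
&\le H_{\mu_k}\bigl(\Phi_{Nt_0}^{\mathrm{in}}, \ldots, \Phi_{\lfloor n/(Nt_0)\rfloor Nt_0}^{\mathrm{in}} \mid Q_n\bigr) \\
&\quad + H_{\mu_k}\bigl(\Phi_{Nt_0}^{\mathrm{out}}, \ldots, \Phi_{\lfloor n/(Nt_0)\rfloor Nt_0}^{\mathrm{out}} \mid \mathcal{P}_n^{t_0}(B) \vee \beta_n(t_0,L,R)\bigr),
\end{align*}
where for the second term I further used Item \eqref{item: entropy 3} to drop the extra conditioning on $w_n$.

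The second term then vanishes by Lemma \ref{lem: entropy lamps outside}. For the first term, Proposition \ref{prop: entropy lamps inside}, whose hypotheses are exactly those granted here, produces constants $K, n_0, T \ge 1$, depending only on $\varepsilon$, $L$, $R$, $\mu$ and $\{\mu_k\}_{k\ge 1}$, such that for every $k \ge K$, $t_0 \ge T$, $N > n_0$ and $n > Nt_0$ the first term is bounded by $\varepsilon n + (H(\mu)+1)\tfrac{n n_0}{N}$. Adding the two contributions gives the announced inequality. I do not anticipate a genuine obstacle: this statement is a packaging step that assembles Lemma \ref{lem: entropy lamps outside} and Proposition \ref{prop: entropy lamps inside} into the precise shape needed for Theorem \ref{thm: continuity asymptotic entropy wreath prods}. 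The only small verification is the first observation above, that the $B$-projections of the $Nt_0$-coarse trajectory on $A\wr B$ are already encoded in $\mathcal{P}_n^{t_0}(B)$, which is immediate from Definition \ref{def: coarse trajectory}.
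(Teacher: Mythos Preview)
Your proof is correct and follows essentially the same approach as the paper's: both decompose $\mathcal{P}_n^{Nt_0}(A\wr B)$ into the $B$-projections (absorbed by $\mathcal{P}_n^{t_0}(B)$) and the lamp configurations, split the latter into inside/outside restrictions, and then apply Lemma \ref{lem: entropy lamps outside} and Proposition \ref{prop: entropy lamps inside}. The only cosmetic difference is that the paper first bounds the inside and outside pieces separately and then notes that together they determine each $\varphi_{jNt_0}$, whereas you first write the decomposition and then bound each piece; the logical content is identical.
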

\begin{proof}
	Let us consider any $\varepsilon>0$ and finite subsets $L\subseteq A$ and $R\subseteq B$ with $e_A\in L$ and $e_{B}\in R$. We can apply Proposition \ref{prop: entropy lamps inside} to find $K,n_0,T\ge 1$ such that for all $k\ge K$, every $t_0\ge T$, any $N>n_0$ and all $n> Nt_0$ we have
	\[
	H_{\mu_k}\left(\Phi^{\mathrm{in}}_{Nt_0},\Phi^{\mathrm{in}}_{2Nt_0},\ldots, \Phi^{\mathrm{in}}_{\left\lfloor \frac{n}{Nt_0} \right\rfloor Nt_0}\Big| w_n\vee \mathcal{P}_n^{t_0}\vee \beta_n(t_0,L,R)\right)<\varepsilon n +(H(\mu)+1)\frac{nn_0}{N}.
	\]
	In addition, from Lemma \ref{lem: entropy lamps outside}, we have that 	
	\[
	H_{\mu_k}\left(\Phi^{\mathrm{out}}_{Nt_0},\Phi^{\mathrm{out}}_{2Nt_0},\ldots, \Phi^{\mathrm{out}}_{\left\lfloor \frac{n}{Nt_0} \right\rfloor Nt_0}\Big| \mathcal{P}_n^{t_0}(B)\vee \beta_n(t_0,L,R)\right)=0.
	\]
	
	Combining these two equations together with the fact that for each $j=1,\ldots, \left\lfloor \frac{n}{Nt_0} \right\rfloor$, the lamp configuration $\varphi_{jNt_0}$ at some instant $jNt_0$, is completely determined by the combination of the values of $\Phi^{\mathrm{out}}_{jNt_0}$ and $\Phi^{\mathrm{in}}_{jNt_0}$, we get that
	
	\[
	H_{\mu_k}\left(\varphi_{Nt_0},\varphi_{2Nt_0},\ldots, \varphi_{\left\lfloor \frac{n}{Nt_0} \right\rfloor Nt_0}\Big| w_n\vee \mathcal{P}_n^{t_0}(B)\vee \beta_n(t_0,L,R)\right)<\varepsilon n +(H(\mu)+1)\frac{nn_0}{N}.
	\]	
	
	Finally, we note that $\mathcal{P}_n^{Nt_0}(A\wr B)$ is completely determined by the values $\Big(\varphi_{jNt_0}\Big)_{j=1,\ldots,\left\lfloor \frac{n}{Nt_0} \right\rfloor}$ of the lamp configurations, together with the values $\Big(X_{jNt_0}\Big)_{j=1,\ldots,\left\lfloor \frac{n}{Nt_0} \right\rfloor}$ of the projections to the base group $B$. Since the latter information is contained in the partition $\mathcal{P}_n^{t_0}$, we obtain
	\begin{align*}
		H_{\mu_k}\Big((\mathcal{P}_n^{Nt_0}(A\wr B)\Big \vert w_n\vee \mathcal{P}_n^{t_0}(B)\vee  \beta_n \Big)&=	H_{\mu_k}\left(\Big(\varphi_{jNt_0}\Big)_{j=1,\ldots,\left\lfloor \frac{n}{Nt_0} \right\rfloor}\Big \vert w_n\vee \mathcal{P}_n^{t_0}(B)\vee  \beta_n\right)\\
		&<  \varepsilon n +(H(\mu)+1)\frac{nn_0}{N}.
	\end{align*}
\end{proof}

In the statement of Lemma \ref{lem: starting point for iterated wreath products} below, there are two coarse trajectories that appear. For a probability measure $\mu$ on the wreath product $A\wr B$ and for $N,t_0\ge 1$, we consider $\mathcal{P}_n^{t_0}(B)$ the $t_0$-coarse trajectory of the induced random walk on the base group $B$, and $\mathcal{P}_n^{Nt_0}(A\wr B)$ the $Nt_0$-coarse trajectory of the $\mu$-random walk on $A\wr B$.

\begin{lem}\label{lem: starting point for iterated wreath products} 
	Let  $\mu$ be a probability measure on $A\wr B$ with $H(\mu)<\infty$, and consider a sequence $\{\mu_k\}_{k\ge 1}$ of probability measure on $A\wr B$ with $H(\mu_k)<\infty$ for all $k\ge 1$. Suppose that $\lim_{k\to \infty}\mu_k(g)=\mu(g)$ for all $g\in A\wr B$ and that $\lim_{k\to \infty}H(\mu_k)=H(\mu)$. Denote by $\pi:A\wr B\to B$ the canonical epimorphism, and let us furthermore suppose that $\lim_{k\to \infty}\pesc{\pi_{*}\mu_k}=\pesc{\pi_{*}\mu}$, that the $\pi_{*}\mu$-random walk on $B$ is transient, and  that $\langle \supp{\pi_{*}\mu}\rangle_{+}$ is symmetric. Then for every $\varepsilon>0$ there exist $C\ge 0$ and $K,n_0,T\ge 1$ such that for all $k\ge K$, $t_0\ge T$, $N>n_0$ and $n> Nt_0$ we have
	\[
	H_{\mu_k}\Big(\mathcal{P}_n^{Nt_0}(A\wr B)\Big \vert w_n\Big)\le \varepsilon n +(H(\mu)+1)\frac{nn_0}{N}+H_{\mu_k}\Big(\mathcal{P}_n^{t_0}(B)\Big| w_n\Big)+C.
	\]
\end{lem}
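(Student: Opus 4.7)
The plan is to combine Lemma \ref{lem: main technical lemma} with the entropy bound for the partition of bad increments from Lemma \ref{lem: bad increments have small entropy}, using the elementary entropy manipulations of Lemma \ref{lem: basic properties entropy} to move $\beta_n(t_0,L,R)$ and $\mathcal{P}_n^{t_0}(B)$ out of the conditioning one at a time.

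First I would fix $\varepsilon>0$ and apply Lemma \ref{lem: bad increments have small entropy} (with $\varepsilon/2$ in place of $\varepsilon$, and for any fixed choice of $t_0$, though one can take $t_0 \ge T$ later): this yields finite symmetric subsets $L\subseteq A$, $R\subseteq B$, a constant $C\ge 0$, and $K_1\ge 1$ such that
\[
H_{\mu_k}\bigl(\beta_n(t_0,L,R)\bigr)<\tfrac{\varepsilon}{2} n + C
\]
for every $k\ge K_1$ and every $n\ge t_0$. Next, with these $L,R$ held fixed, I would apply Lemma \ref{lem: main technical lemma} (again with $\varepsilon/2$) to obtain $K_2,n_0,T\ge 1$ such that for all $k\ge \max\{K_1,K_2\}$, $t_0\ge T$, $N>n_0$ and $n>Nt_0$,
\[
H_{\mu_k}\Bigl(\mathcal{P}_n^{Nt_0}(A\wr B)\,\Big|\, w_n\vee \mathcal{P}_n^{t_0}(B)\vee \beta_n(t_0,L,R)\Bigr)\le \tfrac{\varepsilon}{2} n + (H(\mu)+1)\tfrac{nn_0}{N}.
\]

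The main step is then the following chain of inequalities, obtained by writing $\beta_n\coloneqq\beta_n(t_0,L,R)$ and using Items \eqref{item: entropy 1}, \eqref{item: entropy 2} and \eqref{item: entropy 3} of Lemma \ref{lem: basic properties entropy}:
\begin{align*}
H_{\mu_k}\bigl(\mathcal{P}_n^{Nt_0}(A\wr B)\,\big|\, w_n\bigr)
&\le H_{\mu_k}\bigl(\mathcal{P}_n^{Nt_0}(A\wr B)\vee \mathcal{P}_n^{t_0}(B)\vee \beta_n \,\big|\, w_n\bigr) \\
&= H_{\mu_k}\bigl(\mathcal{P}_n^{Nt_0}(A\wr B)\,\big|\, w_n\vee \mathcal{P}_n^{t_0}(B)\vee \beta_n\bigr) + H_{\mu_k}\bigl(\mathcal{P}_n^{t_0}(B)\vee \beta_n \,\big|\, w_n\bigr) \\
&\le H_{\mu_k}\bigl(\mathcal{P}_n^{Nt_0}(A\wr B)\,\big|\, w_n\vee \mathcal{P}_n^{t_0}(B)\vee \beta_n\bigr) + H_{\mu_k}\bigl(\beta_n\bigr) + H_{\mu_k}\bigl(\mathcal{P}_n^{t_0}(B)\,\big|\, w_n\bigr).
\end{align*}
Plugging in the two bounds above gives precisely
\[
H_{\mu_k}\bigl(\mathcal{P}_n^{Nt_0}(A\wr B)\,\big|\, w_n\bigr) \le \varepsilon n + (H(\mu)+1)\tfrac{nn_0}{N} + H_{\mu_k}\bigl(\mathcal{P}_n^{t_0}(B)\,\big|\, w_n\bigr) + C,
\]
which is the desired inequality, with $K\coloneqq \max\{K_1,K_2\}$.

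There is no genuine obstacle here, since all the technical work has been done in Lemmas \ref{lem: main technical lemma} and \ref{lem: bad increments have small entropy}; the only mild subtlety is to make sure that the finite subsets $L$ and $R$ are chosen \emph{before} invoking Lemma \ref{lem: main technical lemma}, so that the constants $n_0$ and $T$ produced by the latter depend on this fixed choice. The constant $C$ in the final statement absorbs the additive constant from the entropy of $\beta_n$, while the term $\varepsilon n$ comes from combining two contributions of size $\varepsilon n/2$.
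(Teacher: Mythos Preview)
Your proposal is correct and follows essentially the same approach as the paper: invoke Lemma~\ref{lem: bad increments have small entropy} to bound the entropy of $\beta_n(t_0,L,R)$, invoke Lemma~\ref{lem: main technical lemma} with the resulting $L,R$, and then peel off $\beta_n$ and $\mathcal{P}_n^{t_0}(B)$ from the conditioning via the chain rule. The only cosmetic difference is that the paper writes the bound on $\beta_n$ as a conditional entropy $H_{\mu_k}(\beta_n\mid \mathcal{P}_n^{t_0}(B))$ before using it, whereas you use the unconditional $H_{\mu_k}(\beta_n)$ directly; since the former is dominated by the latter and Lemma~\ref{lem: bad increments have small entropy} actually provides the unconditional bound, your version is equally valid.
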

\begin{proof}	
	By using Lemmas \ref{lem: bad increments have small entropy} and \ref{lem: main technical lemma}, for every $\varepsilon>0$ we can find finite symmetric subsets $L\subseteq A$ and $R\subseteq B$ with $e_A\in L$ and $e_B\in R$, a constant $C>0$, and $K,n_0,T\ge 1$ such that for every $t_0\ge T$, any $N>n_0$, every $k\ge K$ and all $n>Nt_0$ we have
	\begin{enumerate}
		\item $H_{\mu_k}(\beta_n(t_0,L,R)\mid \mathcal{P}_n^{t_0}(B))<\frac{\varepsilon}{2}n+C$, and
		\item $	H_{\mu_k}\Big(\mathcal{P}_n^{Nt_0}(A\wr B)\Big \vert w_n\vee \mathcal{P}_n^{t_0}(B)\vee  \beta_n(t_0,L,R) \Big)\le \frac{\varepsilon}{2} n +(H(\mu)+1)\frac{nn_0}{N}.$
	\end{enumerate}
	Note also that $\widetilde{W}_n^{Nt_0}=\mathcal{P}_n^{Nt_0}(A\wr B).$
	From this, we obtain
	\begin{align*}
		H_{\mu_k}(\mathcal{P}_n^{t_0}(B)\vee  \beta_n\mid w_n)	&\le 	H_{\mu_k}(\mathcal{P}_n^{t_0}(B)\mid w_n)+H_{\mu_k}(\beta_n\mid \mathcal{P}_n^{t_0}(B))\\
		&\le 	H_{\mu_k}(\mathcal{P}_n^{t_0}(B)\mid w_n) + \frac{\varepsilon}{2} n+ C.
	\end{align*}
	so that \begin{equation}\label{eq: 2bound of entropy all partitions}
		H_{\mu_k}( \mathcal{P}_n^{t_0}(B)\vee  \beta_n)\le 	H_{\mu_k}(\mathcal{P}_n^{t_0}(B)\mid w_n)+\frac{\varepsilon}{2} n +C.
	\end{equation}
	
	Therefore, using Equation \eqref{eq: 2bound of entropy all partitions}, we can conclude that 
	\begin{align*}
		H_{\mu_k}\Big(\mathcal{P}_n^{Nt_0}(A\wr B)\Big \vert w_n\Big)&\le 	H_{\mu_k}\Big(\widetilde{W}_n^{Nt_0}\Big \vert w_n\vee \mathcal{P}_n^{t_0}(B)\vee   \beta_n \Big)+H_{\mu_k}(\mathcal{P}_n^{t_0}(B)\vee  \beta_n\mid w_n)\\
		&\le \varepsilon n +(H(\mu)+1)\frac{nn_0}{N} + 	H_{\mu_k}(\mathcal{P}_n^{t_0}(B)\mid w_n) +C,
	\end{align*}
	which is the inequality from the statement of the proposition.
\end{proof}

We will furthermore need the following lemma.
\begin{lem}\label{lem: slices of independent increments have additive entropy} 
	Let $N,t_0\ge 1$. Then for any probability measure $\mu$ on $A\wr B$, it holds that 
	\[H_{\mu}\Big(\mathcal{P}_n^{Nt_0}(A\wr B)\Big)=\left\lfloor \frac{n}{Nt_0}\right\rfloor H_{\mu}(w_{Nt_0}) \text{ for each }n>Nt_0.\]
\end{lem}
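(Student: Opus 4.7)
The plan is to exploit the independence of increments of the random walk. Let $s = \lfloor n/(Nt_0) \rfloor$ and, for each $j = 0, 1, \ldots, s-1$, define the increment
\[ Y_j \coloneqq w_{jNt_0}^{-1} w_{(j+1)Nt_0}. \]
By the definition of the random walk on $A \wr B$ as a product of i.i.d.\ $\mu$-distributed steps, each $Y_j$ is the product of $Nt_0$ consecutive independent $\mu$-distributed increments, and distinct $Y_j$'s involve disjoint blocks of increments. Hence $Y_0, Y_1, \ldots, Y_{s-1}$ are independent, each with distribution $\mu^{*Nt_0}$, so $H_\mu(Y_j) = H(\mu^{*Nt_0}) = H_\mu(w_{Nt_0})$ for every $j$.

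Next I would observe that the tuple $(Y_0, \ldots, Y_{s-1})$ and the tuple $\mathcal{P}_n^{Nt_0}(A\wr B) = (w_{Nt_0}, w_{2Nt_0}, \ldots, w_{sNt_0})$ generate the same partition of the space of sample paths, since one can recover one from the other via the mutually inverse formulas
\[ w_{jNt_0} = Y_0 Y_1 \cdots Y_{j-1} \quad \text{and} \quad Y_j = w_{jNt_0}^{-1} w_{(j+1)Nt_0}. \]
Therefore $H_\mu(\mathcal{P}_n^{Nt_0}(A\wr B)) = H_\mu(Y_0, Y_1, \ldots, Y_{s-1})$.

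Finally, since entropy of a joint distribution of independent random variables is additive, I would conclude
\[ H_\mu(Y_0, Y_1, \ldots, Y_{s-1}) = \sum_{j=0}^{s-1} H_\mu(Y_j) = s \, H_\mu(w_{Nt_0}) = \left\lfloor \frac{n}{Nt_0} \right\rfloor H_\mu(w_{Nt_0}), \]
which is the claimed equality. There is no real obstacle here: the statement is a direct consequence of the Markov property (independent increments) combined with the invariance of Shannon entropy under bijective relabelings of the state space. The only small care needed is the bookkeeping that the map between coarse trajectories and the block increments is a bijection measurable in both directions, so it induces the same $\sigma$-algebra and hence the same Shannon entropy.
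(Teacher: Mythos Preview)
Your proof is correct and follows essentially the same approach as the paper's: the paper simply observes that $\mathcal{P}_n^{Nt_0}(A\wr B)$ is determined by $\lfloor n/(Nt_0)\rfloor$ independent random variables each of entropy $H_\mu(w_{Nt_0})$, and your argument spells out exactly this via the block increments $Y_j$ and the bijection with the coarse trajectory.
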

\begin{proof}
	This follows from the fact that the values of $\mathcal{P}_n^{Nt_0}(A\wr B)$ are completely determined by $\left\lfloor\frac{n}{Nt_0}\right\rfloor$ independent random variables, each one with entropy $H_{\mu}\left( w_{Nt_0} \right)$.
\end{proof}

\begin{prop}\label{prop: final entropy estimates for continuity of entropy on wreath products} 
	Let  $\mu$ be a probability measure on $A\wr B$ with $H(\mu)<\infty$, and consider a sequence $\{\mu_k\}_{k\ge 1}$ of probability measure on $A\wr B$ with $H(\mu_k)<\infty$ for all $k\ge 1$. Suppose that $\lim_{k\to \infty}\mu_k(g)=\mu(g)$ for all $g\in A\wr B$ and that $\lim_{k\to \infty}H(\mu_k)=H(\mu)$. Denote by $\pi:A\wr B\to B$ the canonical epimorphism, and let us furthermore suppose that 
	\begin{itemize}
		\item $\lim_{k\to \infty}\pesc{\pi_{*}\mu_k}=\pesc{\pi_{*}\mu}$,
		\item  $\pi_{*}\mu$-random walk on $B$ is transient,
		\item  $h(\pi_{*}\mu)=0$, and that
		\item $\langle \supp{\pi_{*}\mu}\rangle_{+}$ is symmetric.
	\end{itemize}
	Then for each $\varepsilon>0$ there is a constant $C>0$, and $K,n_0,T\ge 1$ such that for every $t_0\ge T$, any $N>n_0$, every $k\ge K$ and all $n>Nt_0$ we have
	\[
	H_{\mu_k}(w_n)\ge \left(\frac{n}{Nt_0}-1\right)H_{\mu_k}(w_{Nt_0})- \varepsilon n-(H(\mu)+1)\frac{nn_0}{N}-C.
	\]
\end{prop}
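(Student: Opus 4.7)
The strategy is to lower-bound $H_{\mu_k}(w_n)$ by the entropy of the $Nt_0$-coarse trajectory $\mathcal{P}_n^{Nt_0}(A\wr B)$, minus its conditional entropy given $w_n$. The starting inequality is the elementary
\[
H_{\mu_k}(w_n) \;\ge\; H_{\mu_k}\!\left(\mathcal{P}_n^{Nt_0}(A\wr B)\right) - H_{\mu_k}\!\left(\mathcal{P}_n^{Nt_0}(A\wr B)\,\middle|\, w_n\right),
\]
which follows from the chain rule $H(\rho\vee\gamma)=H(\rho)+H(\gamma\mid\rho)$ and $H(\gamma)\le H(\rho\vee\gamma)$. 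The first term on the right is computed \emph{exactly} by Lemma \ref{lem: slices of independent increments have additive entropy}, giving
\[
H_{\mu_k}\!\left(\mathcal{P}_n^{Nt_0}(A\wr B)\right) \;=\; \left\lfloor \tfrac{n}{Nt_0}\right\rfloor H_{\mu_k}(w_{Nt_0}) \;\ge\; \left(\tfrac{n}{Nt_0}-1\right) H_{\mu_k}(w_{Nt_0}),
\]
which already matches the leading term of the desired inequality.

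The second term is bounded by invoking Lemma \ref{lem: starting point for iterated wreath products} with tolerance $\varepsilon/2$: for $k,t_0,N$ sufficiently large and $n>Nt_0$,
\[
H_{\mu_k}\!\left(\mathcal{P}_n^{Nt_0}(A\wr B)\,\middle|\, w_n\right) \;\le\; \tfrac{\varepsilon}{2}\,n + (H(\mu)+1)\tfrac{nn_0}{N} + H_{\mu_k}\!\left(\mathcal{P}_n^{t_0}(B)\,\middle|\, w_n\right) + C_1.
\]
Since the partition $\mathcal{P}_n^{t_0}(B)$ on the space of trajectories in $A\wr B$ coincides with the corresponding partition on $B^{\mathbb{Z}_+}$ under the pushforward by $\pi$, Item \eqref{item: entropy 3} of Lemma \ref{lem: basic properties entropy} yields $H_{\mu_k}(\mathcal{P}_n^{t_0}(B)\mid w_n) \le H_{\pi_*\mu_k}(\mathcal{P}_n^{t_0})$. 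I then intend to apply Lemma \ref{lem: coarse trajectory has small entropy} to the base-group random walks $(\pi_*\mu,\{\pi_*\mu_k\})$, using the hypothesis $h(\pi_*\mu)=0$, to absorb this term into $\tfrac{\varepsilon}{2}n$ for $k\ge K$ and $t_0\ge T$ large enough. Concatenating everything and collecting error terms produces the stated bound with $C=C_1$.

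The only delicate point, and probably the main obstacle, is verifying that the hypotheses of Lemma \ref{lem: coarse trajectory has small entropy} hold for the pushed-forward sequence: we automatically get $H(\pi_*\mu)\le H(\mu)<\infty$, pointwise convergence $\pi_*\mu_k\to\pi_*\mu$ (via $\ell^1$-convergence from Lemma \ref{lem: pointwise convergence is equivalent to total variation convergence}), and $H(\pi_*\mu_k)<\infty$, but the entropy convergence $H(\pi_*\mu_k)\to H(\pi_*\mu)$ is not immediate. This should be handled by the same entropy-tightness route as in the proof of Lemma \ref{lem: convolutions entropy convergence}: the hypothesis $H(\mu_k)\to H(\mu)$ makes $\{\mu_k\}$ entropy-tight (\cite[Lemma 3.5]{AmirAngelVirag2013}), a property which is preserved under the projection $\pi$, and then \cite[Lemma 3.2]{AmirAngelVirag2013} combined with lower semicontinuity of entropy (Fatou applied to $-x\log x$) forces $H(\pi_*\mu_k)\to H(\pi_*\mu)$. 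Apart from this verification, the proposition is a straightforward assembly of Lemmas \ref{lem: slices of independent increments have additive entropy}, \ref{lem: starting point for iterated wreath products}, and \ref{lem: coarse trajectory has small entropy}; the geometric content — that $w_n$ together with a coarse base trajectory and some bad-increment data determines the coarse lamp trajectory up to entropy $\varepsilon n+O(n/N)$ — was already packaged into Proposition \ref{prop: entropy lamps inside} and Lemma \ref{lem: starting point for iterated wreath products}.
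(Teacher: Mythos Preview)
Your proposal is correct and follows essentially the same route as the paper's proof: both combine the identity $H_{\mu_k}(\mathcal{P}_n^{Nt_0}(A\wr B))=\lfloor n/(Nt_0)\rfloor H_{\mu_k}(w_{Nt_0})$ from Lemma \ref{lem: slices of independent increments have additive entropy} with the conditional bound from Lemma \ref{lem: starting point for iterated wreath products}, and absorb $H_{\mu_k}(\mathcal{P}_n^{t_0}(B)\mid w_n)\le H_{\mu_k}(\mathcal{P}_n^{t_0}(B))$ via Lemma \ref{lem: coarse trajectory has small entropy} applied to the base-group walk. Your explicit verification that $H(\pi_*\mu_k)\to H(\pi_*\mu)$ (via entropy-tightness transferring under $\pi$, which holds because $-x\log x$ is subadditive) is a point the paper applies tacitly, so your care there is warranted.
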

\begin{proof}
	
	Let $\varepsilon>0$. We can use Lemmas \ref{lem: coarse trajectory has small entropy} and \ref{lem: starting point for iterated wreath products}  to find $C\ge 0$ and $K,n_0,T\ge 1$ such that for all $k\ge K$, $t_0\ge T$, $N>n_0$ and $n> Nt_0$ we have
	\[
	H_{\mu_k}\Big(\mathcal{P}_n^{Nt_0}(A\wr B)\Big \vert w_n\Big)\le \frac{\varepsilon}{2}n +(H(\mu)+1)\frac{nn_0}{N}+H_{\mu_k}\Big(\mathcal{P}_n^{t_0}(B)\Big| w_n\Big)+C,
	\]
	as well as $H_{\mu_k}\Big(\mathcal{P}_n^{t_0}(B)\Big| w_n\Big)\le H_{\mu_k}(\mathcal{P}_n^{t_0}(B))<\frac{\varepsilon}{2} n$.

	Recall that, from Lemma \ref{lem: slices of independent increments have additive entropy}, we know that \[H_{\mu_k}\Big(\mathcal{P}_n^{Nt_0}(A\wr B)\Big)=\left\lfloor \frac{n}{Nt_0}\right\rfloor H_{\mu_k}(w_{Nt_0}).\]
	
	Thus, using the above, we obtain 
	\begin{align*}
		\left\lfloor \frac{n}{Nt_0}\right\rfloor H_{\mu_k}(w_{Nt_0})&=H_{\mu_k}\Big(\mathcal{P}_n^{Nt_0}(A\wr B)\Big)\\
		&\le 	H_{\mu_k}\Big(\mathcal{P}_n^{Nt_0}(A\wr B)\Big \vert w_n\Big)+H_{\mu_k}(w_n)\\
		&\le   \frac{\varepsilon}{2}n +(H(\mu)+1)\frac{nn_0}{N}+H_{\mu_k}\Big(\mathcal{P}_n^{t_0}(B)\Big| w_n\Big)+C+H_{\mu_k}(w_n)\\
		&\le  \varepsilon n +(H(\mu)+1)\frac{nn_0}{N}+C+H_{\mu_k}(w_n).
	\end{align*}
	
	From this, one deduces directly the inequality from the statement of the proposition.
\end{proof}

We can now proceed with the proof of our main theorem.

\begin{proof}[Proof of Theorem \ref{thm: continuity asymptotic entropy wreath prods}]
	Let $\varepsilon>0$, and let us use Proposition \ref{prop: final entropy estimates for continuity of entropy on wreath products} to find $C,K,t_0\ge 1$ such that for every $t_0\ge T$, any $N>n_0$, every $k\ge K$ and all $n>Nt_0$ we have
	\[
	H_{\mu_k}(w_n)\ge \left(\frac{n}{Nt_0}-1\right)H_{\mu_k}(w_{Nt_0})- \varepsilon n-(H(\mu)+1)\frac{nn_0}{N}-C.
	\]

	Dividing this inequality by $n$ and then taking the limit $n\to \infty$, we obtain
	\[	h(\mu_k)\ge \frac{1}{Nt_0}H_{\mu_k}(w_{Nt_0})- \varepsilon -(H(\mu)+1)\frac{n_0}{N}.\]
	Now we consider this inequality as $k$ tends to infinity, and obtain
	\[\liminf_{k\to \infty}	h(\mu_k)\ge \frac{1}{Nt_0}H_{\mu}(w_{Nt_0})- \varepsilon -(H(\mu)+1)\frac{n_0}{N},\]
	where we used Lemma \ref{lem: convolutions entropy convergence} to justify that $H_{\mu_k}(w_{Nt_0})\xrightarrow[k\to \infty]{}H_{\mu}(w_{Nt_0})$.
	
	Next, we let $N$ tend to infinity and we get
	\[\liminf_{k\to \infty}	h(\mu_k)\ge h(\mu)- \varepsilon.\]
	Finally, since this inequality holds for all $\varepsilon>0$, we can consider the limit as $\varepsilon$ tends to $0$ and obtain $\liminf_{k\to \infty}	h(\mu_k)\ge h(\mu).$ Since the inequality $\limsup_{k\to \infty}	h(\mu_k)\le h(\mu)$ always holds (see Proposition \ref{prop: upper semicontinuous entropy}), we conclude that 
	\[
	\lim_{k\to \infty}h(\mu_k)=h(\mu),
	\]
	which is what we wanted to prove.
\end{proof}

Finally, we explain how one deduces Theorem \ref{thm: main corollary continuity of asymptotic entropy over Zd} from Theorem \ref{thm: continuity asymptotic entropy wreath prods}.
\begin{proof}[Proof of Theorem \ref{thm: main corollary continuity of asymptotic entropy over Zd}]
	Let $A$ be a countable group and let $B$ be a countable hyper-FC-central group. Let us suppose $B$ contains a finitely generated subgroup of at least cubic growth.  Let us denote by $\pi:A\wr B\to B$ the canonical epimorphism to the base group $B$.
	
	Consider any non-degenerate probability measure $\mu$ on $A\wr B$ with $H(\mu)<\infty$, and a sequence $\{\mu_k\}_{k\ge 1}$ of probability measures on $A\wr B$ with $H(\mu_k)<\infty$ for all $k\ge 1$, such that $\lim_{k\to \infty }\mu_k(g)=\mu(g)$ for each $g \in A\wr B$ and $\lim_{k\to \infty} H(\mu_k)=H(\mu)$.
	
	Since $\mu$ is non-degenerate and $B$ contains a finitely generated subgroup of at least cubic growth, it follows from Varopoulos \cite{Varopoulos1986} that the $\pi_{*}\mu$-random walk on $B$ is transient. Furthermore, we can use Theorem \ref{thm: continuity of range} to conclude that $\lim_{k\to \infty}\pesc{\pi_{*}\mu_k}=\pesc{\pi_{*}\mu}$. Since $B$ is hyper-FC-central and $H(\mu)<\infty$, it follows from \cite{LinZaidenberg1998,Jaworski2004} and the entropy criterion \cite{Derrienic1980,KaimanovcihVershik1983} that $h(\pi_{*}\mu)=0.$ Finally, as $\mu$ is non-degenerate, we have $\langle \supp{\pi_{*}\mu}\rangle_{+}=B$. Hence, all the hypotheses of Theorem \ref{thm: continuity asymptotic entropy wreath prods} are verified, and we obtain that $\lim_{k\to \infty} h(\mu_k)=h(\mu)$.

\end{proof}
\section{Continuity of harmonic measures}\label{section: convergence of harmonic measures}
\subsection{Continuity of asymptotic entropy as a consequence of the continuity of harmonic measures}\label{subsection: continuity of harmonic measures}
We start by proving a lemma that connects the weak convergence of stationary probability measures to the stationarity of their weak limits.

\begin{lem}\label{lem: weak limits are mu stationary}
	Let $G$ be a countable group and let $X$ be a separable completely metrizable space. Suppose that $G$ acts on $X$ continuously. Let $\mu$, $\{\mu_k\}_{k\ge 1}$ be probability measures on $G$, such that $\lim_{k\to \infty}\mu_k(g)=\mu(g)$ for each $g\in G$. Suppose that there are probability measures $\{\nu_k\}_{k\ge 1}$ on $X$ such that $\nu_k$ is $\mu_k$-stationary for each $k\ge 1$. Then, any weak sequential accumulation point of $\{\nu_k\}_{k\ge 1}$ is $\mu$-stationary.
\end{lem}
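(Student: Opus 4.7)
The plan is to verify directly that if $\nu_{k_j} \to \nu$ weakly along some subsequence, then $\nu$ satisfies the stationarity equation $\int_X f\, d\nu = \sum_{g \in G} \mu(g) \int_X f(gx)\, d\nu(x)$ for every bounded continuous $f: X \to \mathbb{R}$. To simplify notation I would relabel the subsequence and just assume $\nu_k \to \nu$ weakly. The stationarity of each $\nu_k$ gives
\[
\int_X f\, d\nu_k = \sum_{g \in G} \mu_k(g) \int_X f(gx)\, d\nu_k(x),
\]
so it suffices to pass to the limit in both sides separately.

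The right-hand side converges to $\int_X f\, d\nu$ by weak convergence of $\{\nu_k\}_{k\ge 1}$. For the left-hand side, I would split
\[
\Bigl| \sum_{g} \mu_k(g)\!\!\int_X\! f(gx) d\nu_k - \sum_g \mu(g)\!\!\int_X\! f(gx) d\nu \Bigr| \le A_k + B_k,
\]
with
\[
A_k \coloneqq \sum_g |\mu_k(g) - \mu(g)| \cdot \Bigl|\int_X f(gx)\, d\nu_k(x)\Bigr|, \qquad B_k \coloneqq \sum_g \mu(g) \Bigl|\int_X f(gx)\, d(\nu_k - \nu)(x)\Bigr|.
\]
The first error is bounded by $\|f\|_\infty \sum_g |\mu_k(g) - \mu(g)|$, which tends to $0$ by Lemma \ref{lem: pointwise convergence is equivalent to total variation convergence} (pointwise convergence of probability measures on a countable group coincides with total variation convergence).

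For $B_k$, given $\varepsilon > 0$, choose a finite set $F \subseteq G$ with $\mu(G \setminus F) < \varepsilon$, so that the tail contributes at most $2\|f\|_\infty\,\varepsilon$. For each $g \in F$, the function $x \mapsto f(gx)$ is bounded and continuous because $G$ acts by continuous transformations on $X$, so weak convergence $\nu_k \to \nu$ gives $\int f(gx)\,d\nu_k \to \int f(gx)\,d\nu$. Since $F$ is finite, the corresponding finite partial sum goes to $0$, and hence $\limsup_k B_k \le 2\|f\|_\infty\,\varepsilon$. Letting $\varepsilon \to 0$ yields $B_k \to 0$, so both sides of the stationarity relation pass to the limit, showing $\nu = \mu * \nu$.

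The only mild subtlety is justifying the exchange of limit and infinite sum over $G$ on the left-hand side; the main obstacle is really just making sure that convergence in the step distribution gives enough uniform control, and this is supplied cleanly by Lemma \ref{lem: pointwise convergence is equivalent to total variation convergence}. Continuity of the action is what lets us evaluate weak convergence against the shifted test functions $f \circ g$, and no additional assumption (e.g.\ compactness of $X$) is needed.
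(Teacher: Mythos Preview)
Your proof is correct and follows essentially the same approach as the paper's: reduce to bounded continuous test functions, control the change in step distribution via total variation convergence (Lemma~\ref{lem: pointwise convergence is equivalent to total variation convergence}), and handle the infinite sum over $G$ by truncating to a finite set of large $\mu$-mass and invoking weak convergence on the finitely many shifted functions $f\circ g$. One cosmetic slip: you have interchanged ``left-hand side'' and ``right-hand side'' relative to your displayed equation, but the mathematics is unaffected.
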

\begin{proof}
	
	Let $\nu$ be a weak accumulation point of the sequence $\{\nu_k\}_{k\ge 1}$. Without loss of generality, let us suppose that $\nu_k\xrightarrow[k\to \infty]{}\nu$. The stationarity hypothesis says that, for each $k\ge 1$, we have $ \nu_{k}=\mu_{k}*\nu_{k}=\sum_{g\in G}\mu_{k}(g)g_{*}\nu_{k}.$
	
	Hence, for each bounded continuous function $f:X\to \R$, we get
	\begin{equation}\label{eq: stationarity sequence}
		\int_X f(x)d\nu_{k}=\sum_{g\in G}\mu_{k}(g)\int_{X}f(gx)d\nu_{k}(x).	
	\end{equation}
	
	By definition of weak convergence, for the term on the left side of Equation \eqref{eq: stationarity sequence}, we have $\int_X f(x)d\nu_{k}(x)\xrightarrow[k\to \infty]{}\int_X f(x)d\nu(x).$ Now let us show that, for the term on the right side of Equation \eqref{eq: stationarity sequence}, we have $\sum_{g\in G}\mu_{k}(g)\int_{X}f(gx)d\nu_{k}(x)\xrightarrow[k\to \infty]{}\sum_{g\in G}\mu(g)\int_{X}f(gx)d\nu(x).$
	
	Indeed, let $\varepsilon>0$. Since $\mu$ is a probability measure on $G$, we can find a finite subset $F\subseteq G$ such that $\mu(F)>1-\varepsilon$. Then, thanks to the pointwise convergence of $\{\mu_k\}_{k\ge 1}$ to $\mu$, we can find $K_1\ge 1$ such that for each $k\ge K$ we have $\mu_k(F)>1-2\varepsilon$. This implies that $\mu_k(G\backslash F)<2\varepsilon$ for every $k\ge K_1$. Furthermore, from Lemma \ref{lem: pointwise convergence is equivalent to total variation convergence} we can find $K_2\ge K_1$ such that $\|\mu_k-\mu\|_1<\varepsilon$ holds for every $k\ge K_2$.
	
	Using the weak convergence, the fact that $G$ acts by continuous transformations on $X$ and the finiteness of $F$, we can find $K_3\ge K_2$ such that for each $k\ge K_3$ and every $g\in G$ we have 
	$$
	\left|\int_{X}f(gx)d\nu_{k}(x)-\int_{X}f(gx)d\nu(x) \right|<\varepsilon.
	$$
	From this, we obtain
	\footnotesize
	\begin{align*}
		\left| \sum_{g\in G}\mu_{k}(g)\int_{X}f(gx)d\nu_{k}(x)-\sum_{g\in G}\mu(g)\int_{X}f(gx)d\nu(x) \right|&\le 	\sum_{g\in F}\mu_{k}(g)\left| \int_{X}f(gx)d\nu_{k}(x)-\int_{X}f(gx)d\nu(x) \right|+\\
		&+ \sum_{g\in G\backslash F}\mu_{k}(g)\left| \int_{X}f(gx)d\nu_{k}(x)-\int_{X}f(gx)d\nu(x) \right|+\\
		&+\sum_{g\in G}\left|\mu_k(g)-\mu(g)\right|\left|\int_{X}f(gx)d\nu(x)\right|\\
		&\le |F|\varepsilon+ 2\|f\|_{\infty}\mu_k(G\backslash F)+\|\mu_k-\mu\|_1\|f\|_{\infty}\\
		&\le \left(|F|+5\|f\|_{\infty}\right)\varepsilon.
	\end{align*}
	\normalsize
	Since $\varepsilon$ was arbitrary, we conclude the desired convergence. Hence, by taking the limit as $k$ tends to infinity on Equation \eqref{eq: stationarity sequence}, we finally get
	$$
	\int_X f(x)d\nu(x)=\sum_{g\in G}\mu(g)\int_X f(gx)d\nu(x)=\sum_{g\in G}\mu(g)\int_X f(x)dg_{*}\nu(x),
	$$
	for every bounded continuous function $f:X\to \R$. In other words, $\nu=\mu*\nu$, and hence $\nu$ is $\mu$-stationary.		
\end{proof}

Let $X$ be a measurable space, and consider two equivalent probability measures $\nu_1,\nu_2$ on $X$. We recall that the \emph{Kullback-Leibler distance} between $\nu_1$ and $\nu_2$ is defined by $$I(\nu_1\mid \nu_2)\coloneqq -\int_{X}\log\left(\frac{d\nu_1}{d\nu_2}(x)\right)d\nu_2(x).$$ 

The following theorem of Kaimanovich and Vershik expresses the asymptotic entropy of a random walk on a group as the average of Kullback-Leibler distances on the Poisson boundary.

\begin{thm}[{\cite[Theorem 3.1]{KaimanovcihVershik1983}}]\label{thm: entropy as furstenberg formula} Let $\mu$ be a non-degenerate probability measure on a countable group $G$ with $H(\mu)<\infty$. Then,
	\[
	h(\mu)=-\sum_{g\in G}\mu(g)\int_{\partial_{\mu}G}\log\left(\frac{d g^{-1}_{*}\nu}{d\nu}(\xi)\right) d\nu(\xi)=\sum_{g\in G}\mu(g)I(g^{-1}_{*}\nu\mid \nu),
	\]
	where $(\partial_{\mu}G,\nu)$ is the Poisson boundary of $(G,\mu)$.
\end{thm}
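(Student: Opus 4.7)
My plan combines three ingredients: the Markov property of the walk, the multiplicative cocycle structure of the Poisson kernel, and the maximality of $(\partial_\mu G,\nu)$ as a $\mu$-boundary. The non-degeneracy of $\mu$ implies that $g_*\nu$ and $\nu$ are equivalent for every $g$, so the Poisson kernel $K_g(\xi) := \frac{d g_*\nu}{d\nu}(\xi)$ is well-defined. The Markov property together with the identification $\mathbb{P}_\mu(\mathrm{bnd}\in A \mid w_n=g) = g_*\nu(A)$ (coming from the fact that $(w_{n+k})_{k\ge 0}$ conditional on $w_n=g$ is the walk started at $g$) yields by Bayes the explicit conditional law $\mathbb{P}_\mu(w_n = g\mid \mathrm{bnd} = \xi) = \mu^{*n}(g) K_g(\xi)$ for $\nu$-a.e.\ $\xi$. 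Expanding the entropy identity $H(\mu^{*n}) = I(w_n;\mathrm{bnd}) + H(w_n\mid\mathrm{bnd})$, the mutual information can be written as $I(w_n;\mathrm{bnd}) = \mathbb{E}_\mu[\log K_{w_n}(\mathrm{bnd})]$.

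Next I would compute this expectation via a cocycle identity. The chain rule for Radon-Nikodym derivatives gives $K_{gh}(\xi) = K_g(\xi)\,K_h(g^{-1}\xi)$, which iterates along the walk to
\[ \log K_{w_n}(\mathrm{bnd}) = \sum_{k=1}^{n} \log K_{g_k}(\xi_{k-1}), \qquad \xi_k := w_k^{-1}\mathrm{bnd}. \]
A direct conditioning argument shows that the marginal law of each pair $(g_k,\xi_{k-1})$ is the same: $\xi_{k-1}\sim \nu$ unconditionally by $\mu$-stationarity of $\nu$ together with the Markov property, and conditional on $g_k=g$ one has $\xi_{k-1}\sim g_*\nu$. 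Linearity of expectation then yields the exact identity
\[ I(w_n;\mathrm{bnd}) = nF, \qquad F := \sum_{g\in G}\mu(g)\int_{\partial_\mu G} \log K_g(\xi)\, dg_*\nu(\xi) = \sum_{g\in G}\mu(g)\, I(g^{-1}_*\nu\mid\nu), \]
the second equality being an elementary change of variables $\eta = g\xi$ together with the paper's convention $I(\nu_1\mid\nu_2)= -\int \log(d\nu_1/d\nu_2)\,d\nu_2$. Consequently
\[ \tfrac{1}{n} H(w_n \mid \mathrm{bnd}) = \tfrac{1}{n}H(\mu^{*n}) - F \xrightarrow[n\to\infty]{} h(\mu) - F. \]

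The main obstacle is to show this limit is zero, i.e.\ $h(\mu) \le F$; the reverse inequality $h(\mu)\ge F$ (Furstenberg's inequality) is automatic from the non-negativity of conditional entropy. For a strict $\mu$-sub-boundary the cocycle computation above still applies but in general produces the strict inequality $h(\mu) > F$, so equality must exploit that $(\partial_\mu G,\nu)$ is the measurable hull of the full tail-equivalence relation on $G^{\mathbb{Z}_+}$, not merely a $\mu$-stationary quotient. To conclude, I would argue that if $h(\mu) - F > 0$ then, for typical boundary points $\xi$, the conditional distributions $w_n \mid \mathrm{bnd}=\xi$ would have entropy growing at the positive rate $h(\mu)-F$ and hence (after a Shannon-McMillan type argument applied to the disintegrated measure $\mathbb{P}_\xi$) essential support of exponential cardinality $\exp((h(\mu)-F)n + o(n))$. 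A $G$-equivariant measurable selection from these supports would then produce a measurable refinement of the tail partition, contradicting the definition of $(\partial_\mu G,\nu)$ as its measurable hull. This maximality argument is the heart of the Kaimanovich-Vershik proof and is the only place where the specific hypothesis that we work with the Poisson boundary, rather than a generic $\mu$-stationary space, is used.
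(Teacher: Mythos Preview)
The paper does not prove this theorem; it is quoted from \cite[Theorem 3.1]{KaimanovcihVershik1983} and invoked as a black box in the proof of Theorem~\ref{thm: convergence of harmonic measures}. There is therefore no proof in the paper to compare against, so I comment on your sketch relative to the original Kaimanovich--Vershik argument.

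Your computation of $I(w_n;\mathrm{bnd})=nF$ via the cocycle identity and the identification of the pairs $(g_k,\xi_{k-1})$ as identically distributed is correct, and so is the observation that $h(\mu)\ge F$ is automatic while $h(\mu)\le F$ is the substantive direction. The gap is in your proposed approach to that direction. The idea of deducing a contradiction from ``a $G$-equivariant measurable selection from the exponential supports'' producing a proper refinement of the tail partition is too vague to constitute an argument: you do not say what is being selected, how it is tail-measurable, or why the resulting partition is strictly finer modulo null sets. This is not how Kaimanovich--Vershik proceed.

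Their argument is more direct and avoids any selection or conditional Shannon--McMillan statement. One uses that the Poisson boundary $\sigma$-algebra coincides (for non-degenerate $\mu$) with the tail $\sigma$-algebra $\mathcal{T}=\bigcap_m\sigma(w_m,w_{m+1},\ldots)$, so that by martingale convergence and the Markov property
\[
H(w_n\mid\mathrm{bnd})=H(w_n\mid\mathcal{T})=\lim_{m\to\infty}H(w_n\mid w_m)=H(\mu^{*n})+\lim_{m\to\infty}\bigl(H(\mu^{*(m-n)})-H(\mu^{*m})\bigr).
\]
Since the increments $H(\mu^{*j})-H(\mu^{*(j-1)})$ are non-increasing with limit $h(\mu)$, the last limit equals $-n\,h(\mu)$, giving $H(w_n\mid\mathrm{bnd})=H(\mu^{*n})-n\,h(\mu)$ exactly, hence $\tfrac{1}{n}H(w_n\mid\mathrm{bnd})\to 0$. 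Combined with your identity $H(\mu^{*n})=nF+H(w_n\mid\mathrm{bnd})$ this yields $F=h(\mu)$. The maximality of the Poisson boundary enters only through the identification with $\mathcal{T}$, which is what makes the finite-time approximation $H(w_n\mid w_m)$ available.
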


The following result guarantees the joint weak lower-semicontinuity of the Kullback-Leibler distance, and its proof can be found in \cite[Theorem 1]{Posner1975}. Alternatively, this result can be obtained as a consequence of the Donsker-Varadhan variational formula \cite[Theorem 2.1]{DonskerVaradhan1983}; see e.g. \cite[Lemma 1.4.3 (b)]{DupuisEllis1997}.

\begin{prop}\label{prop: KL is lower semicontinuous}
	Let $X$ be a separable completely metrizable space (i.e\ a Polish space). Consider sequences $\{\nu_k\}_{k\ge 1}$ and $\{\eta_k\}_{k\ge 1}$ of probability measures on $X$. Suppose that there are probability measures $\nu,\eta$ on $X$ such that $\nu_k\xrightarrow[k\to \infty]{}\nu$ and $\eta_k\xrightarrow[k\to \infty]{}\eta$ weakly. Then
	\[
	\liminf_{k\to \infty} I(\nu_k|\eta_k)\ge I(\nu|\eta).
	\]
\end{prop}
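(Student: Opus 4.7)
The plan is to deduce the result from the Donsker--Varadhan variational representation of the Kullback--Leibler distance. Under the convention $I(\nu_1\mid\nu_2)=-\int_X\log(d\nu_1/d\nu_2)\,d\nu_2$ used here, the identity $\log(d\nu_2/d\nu_1)=-\log(d\nu_1/d\nu_2)$ identifies $I(\nu_1\mid\nu_2)$ with the standard relative entropy $D(\nu_2\,\|\,\nu_1)$, which, on any Polish space, admits the variational formula
\[
I(\nu_1\mid\nu_2)=\sup_{f\in C_b(X)}\left\{\int_X f\,d\nu_2-\log\int_X e^{f}\,d\nu_1\right\},
\]
where the supremum is taken over bounded continuous real-valued functions on $X$.

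Granting this formula, lower semicontinuity becomes a direct consequence of the definition of weak convergence. Fix $f\in C_b(X)$; since both $f$ and $e^f$ are bounded and continuous, the weak convergences $\nu_k\to\nu$ and $\eta_k\to\eta$ give $\int_X f\,d\eta_k\to\int_X f\,d\eta$ and $\int_X e^f\,d\nu_k\to\int_X e^f\,d\nu$. Because $e^f$ is uniformly bounded away from $0$, the logarithm is continuous on the resulting range, so passing to the liminf in the pointwise lower bound
\[
I(\nu_k\mid\eta_k)\ \ge\ \int_X f\,d\eta_k-\log\int_X e^f\,d\nu_k
\]
yields $\liminf_{k\to\infty}I(\nu_k\mid\eta_k)\ge \int_X f\,d\eta-\log\int_X e^f\,d\nu$. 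Taking the supremum over $f\in C_b(X)$ on the right-hand side and applying the variational formula once more, this time to $I(\nu\mid\eta)$, concludes the proof.

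The main obstacle is justifying the Donsker--Varadhan identity itself. The direction $I(\nu_1\mid\nu_2)\ge\sup(\cdot)$ is elementary: given $f\in C_b(X)$, introduce the Gibbs probability measure $d\lambda_f=\bigl(\int e^f\,d\nu_1\bigr)^{-1}e^f\,d\nu_1$; the chain rule for Radon--Nikodym derivatives yields $D(\nu_2\,\|\,\nu_1)-D(\nu_2\,\|\,\lambda_f)=\int f\,d\nu_2-\log\int e^f\,d\nu_1$, and non-negativity of $D(\nu_2\,\|\,\lambda_f)$ gives the bound. The reverse inequality is more delicate, requiring the approximation of the natural (but possibly unbounded and discontinuous) candidate $f=\log(d\nu_2/d\nu_1)$ by elements of $C_b(X)$ via truncation combined with Lusin's theorem; this is exactly where the Polish hypothesis on $X$ enters. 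For the present application one can simply invoke the identity from \cite{DonskerVaradhan1983} or \cite{DupuisEllis1997}, as the authors propose.
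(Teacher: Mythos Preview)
Your proposal is correct. The paper does not give its own proof of this proposition but merely cites \cite{Posner1975} and, as an alternative route, the Donsker--Varadhan variational formula \cite{DonskerVaradhan1983,DupuisEllis1997}; your argument is precisely a clean execution of the latter approach, so you have supplied the details that the paper leaves to the references.
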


With this, we are ready to present the proof of Theorem \ref{thm: convergence of harmonic measures}.

\begin{proof}[Proof of Theorem \ref{thm: convergence of harmonic measures}]
	Let us first note that the hypotheses $\lim_{k\to \infty}\mu_k(g)=\mu(g)$ and $\lim_{k\to \infty}H(\mu_k)=H(\mu)$ imply that $\limsup_{k\to \infty} h(\mu_k)\le h(\mu)$, thanks to \cite[Proposition 4]{AmirAngelVirag2013} (see also Proposition \ref{prop: upper semicontinuous entropy}). Hence, it suffices to prove that $\liminf_{k\to \infty}h(\mu_k)\ge h(\mu)$.

	Theorem \ref{thm: entropy as furstenberg formula}, together with the assumption that $X$ provides a model for the Poisson boundary, imply that we have the equalities $h(\mu_k)=\sum_{g\in G} \mu_k(g)I(g^{-1}_{*}\nu_k\mid \nu_k)$, for every $k\ge 1$, and $h(\mu)=\sum_{g\in G} \mu(g)I(g^{-1}_{*}\nu\mid \nu)$. 
	
	We know that $\{\nu_k\}_{k\ge 1}$ converges weakly to $\nu$ and that $G$ acts by continuous transformations on $X$. Together, these imply that for each $g\in G$ we also have that $\{g^{-1}_*\nu_k\}_{k\ge 1}$ converges weakly to $g^{-1}_{*}\nu$. It is known that the Kullback-Leiber distance is lower-semicontinuous with respect to weak convergence; see Proposition \ref{prop: KL is lower semicontinuous}. Thus, for each $g\in G$, we obtain $\liminf_{k\to \infty} I(g^{-1}_{*}\nu_k\mid\nu_k)\ge I(g^{-1}_{*}\nu\mid\nu).$ Next, using the fact that $I(g_{*}^{-1}\nu_k\mid \nu_k)\ge 0$ for each $g\in G$ and $k\ge 1$ and Fatou's lemma, we obtain
	\begin{align*}
		\liminf_{k\to \infty}h(\mu_k)&=	\liminf_{k\to \infty} \sum_{g\in G}\mu_k(g)I(g^{-1}_{*}\nu_k\mid \nu_k)\\
		&\ge \sum_{g\in G}\liminf_{k\to \infty}\mu_k(g)I(g^{-1}_{*}\nu_k\mid \nu_k)\\
		&=\sum_{g\in G}\mu(g)\liminf_{k\to \infty}I(g^{-1}_{*}\nu_k\mid \nu_k)\\
		&\ge\sum_{g\in G}\mu(g)I(g^{-1}_{*}\nu\mid \nu) =h(\mu).
	\end{align*}
	We conclude that $\liminf_{k\to \infty}h(\mu_k)\ge h(\mu)$, and hence that $\lim_{k\to \infty}h(\mu_k)=h(\mu)$.
	
	Finally, let us prove that if $X$ admits a unique $\mu$-stationary probability measure $\nu$, then $\nu_k\xrightarrow[k\to \infty]{}\nu$ weakly. This is a well-known argument, that has been remarked already in \cite[Lemma 3]{ErschlerKaimanovich2013} in the case where $G$ is a non-elementary hyperbolic group. By the compactness of $X$, any subsequence $\{\nu_{k_{\ell}}\}_{\ell \ge 1}$ admits a convergent subsequence $\{\nu_{k_{\ell_m}}\}_{m \ge 1}$ to some probability measure $\eta$ on $X$.  	
	Thanks to Lemma \ref{lem: weak limits are mu stationary}, the probability measure $\eta$ is $\mu$-stationary, and hence coincides with the unique $\mu$-stationary probability measure $\nu$ on $X$. We have thus proved that every subsequence of $\{\nu_k\}_{k\ge 1}$ has a further subsequence that converges to the unique $\mu$-stationary probability measure $\nu$ on $G$. This finally implies that the entire sequence converges to $\nu$.	
\end{proof}

\begin{rem}
	In the statement of Theorem \ref{thm: convergence of harmonic measures}, a sufficient condition for the uniqueness of stationary measures is that $X$ is a compact metric space on which each $G$-orbit is dense; see e.g.\ \cite[Proposition 11.4.13]{Lalley2023}.
\end{rem}

\subsection{Applications of Theorem \ref{thm: convergence of harmonic measures}}\label{subsection: applications}
We now present families of groups to which one can apply Theorem \ref{thm: convergence of harmonic measures} to establish the continuity of asymptotic entropy, within a suitable class of step distributions.
\subsubsection{Gromov hyperbolic groups and acylindrically hyperbolic groups}
Recall that a geodesic metric space $X$ is called Gromov hyperbolic if there is $\delta\ge 0$ such that every geodesic triangle in $X$ is $\delta$-thin, meaning that any side is contained in a $\delta$-neighborhood of the union of the other two sides. To each hyperbolic space there is an associated \emph{Gromov boundary} $\partial X$, which consists of equivalence classes of geodesic rays, where two rays are identified if they are at finite Hausdorff distance from each other. On what follows, we will suppose that the space $X$ is separable, which guarantees that the Gromov boundary $\partial X$ carries a natural topology that is separable and completely metrizable. If one furthermore supposes that $X$ is locally compact, then both $\partial X$ and $X\cup \partial X$ are also compact. As a general reference for Gromov hyperbolic spaces, we refer to \cite[Section III.H.3]{BridsonHaefliger} and \cite{Vaislala2005}. A finitely generated group is said to be Gromov hyperbolic if one (equivalently any) of its Cayley graphs with respect to a finite generating set is $\delta$-hyperbolic, for some $\delta\ge 0$. For more background on hyperbolic groups we refer to Gromov's original work \cite{Gromov1987}, and to \cite{GhysdlH1990}.

Let $G$ be a Gromov hyperbolic group, and denote by $\partial G$ its Gromov boundary. Consider a non-elementary probability measure $\mu$ on $G$. It is shown in \cite[Theorem 7.6]{Kaimanovich2000} that there is a unique $\mu$-stationary probability measure $\lambda$ on $\partial G$ (see also \cite[Corollary 1]{Woess1993} for the case where $\langle \supp{\mu}\rangle=G$). More generally, suppose that $G$ is a countable group acting by isometries on a separable Gromov hyperbolic space $X$. Consider a non-elementary probability measure $\mu$ on $G$. It is proved in \cite[Theorem 1.1]{MaherTiozzo2018} that for any $x_0\in X$, almost every sample path $\{w_nx_0\}_{n\ge 0}$ converges to an element $\xi\in \partial X$, and that the distribution of $\xi$ is the unique $\mu$-stationary probability measure on $\partial X$.

We first prove the following lemma, which guarantees the existence of convergent subsequences of a sequence of probability measures on $\partial X$, even though the Gromov boundary $\partial X$ may not be compact. We follow a similar argument to the first part of the proof of \cite[Theorem 1.1]{MaherTiozzo2018}, where Maher and Tiozzo show the existence of $\mu$-stationary probability measures on $\partial X$.
\begin{lem}\label{lem: existence of weak accumulation point}
	Let $G$ be a countable group and let $X$ be a separable Gromov hyperbolic space. Suppose that $G$ acts on $X$ by isometries. Let $\{\mu_k\}_{k\ge 1}$ be non-elementary probability measures on $G$ and denote by $\{\nu_k\}_{k\ge 1}$ the corresponding hitting measures on the Gromov boundary $\partial X$. Then $\{\nu_k\}_{k\ge 1}$ has a convergent subsequence.
\end{lem}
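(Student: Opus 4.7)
Because $X$ is only assumed separable and hyperbolic (not proper), the Gromov boundary $\partial X$ is a Polish space but is not compact in general, so a convergent subsequence of probability measures in $\mathcal{P}(\partial X)$ must be extracted via a tightness argument. My plan is to pass through the horofunction compactification $\widehat{X}$ of $X$, obtained by embedding $X$ into the space of $1$-Lipschitz functions vanishing at a basepoint $x_0$ via $x\mapsto d(\cdot,x)-d(x_0,x)$, with the pointwise-convergence topology. Separability of $X$ makes $\widehat{X}$ a compact metrizable $G$-space, containing $X$ as a dense subset, with horofunction boundary $\partial_h X = \widehat{X}\setminus X$.

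For each $k\ge 1$, non-elementarity of $\mu_k$ together with the convergence result of Maher--Tiozzo implies that $\P_{\mu_k}$-almost every sample path $\{w_n x_0\}_{n\ge 0}$ converges in $\widehat{X}$ to a horofunction that naturally projects to a point of $\partial X$. This lifts each $\nu_k$ to a probability measure $\widehat{\nu}_k$ on $\widehat{X}$, supported on the Busemann image of $\partial X$. Since $\mathcal{P}(\widehat{X})$ is compact and metrizable in the weak-$*$ topology, Prokhorov's theorem furnishes a subsequence $\widehat{\nu}_{k_j}\to\widehat{\nu}$ converging weakly on $\widehat{X}$.

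The crux of the proof is to verify that $\widehat{\nu}$ remains supported on the image of $\partial X$ inside $\partial_h X$, which then yields a weakly convergent subsequence of $\{\nu_k\}_{k\ge 1}$ in $\mathcal{P}(\partial X)$. Equivalently, one needs uniform tightness: for every $\varepsilon>0$, a relatively compact subset $K\subseteq\partial X$ with $\nu_k(K)\ge 1-\varepsilon$ for every $k$. Following the shadow-lemma approach from Maher--Tiozzo's analysis, one controls the $\nu_k$-mass of shadows cast from orbit points far from $x_0$ using the fact that the subgroup generated by $\supp{\mu_k}$ contains loxodromic isometries whose axes allow the hitting measure to be pulled back into a compact region of $\partial X$.

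The main obstacle is ensuring these shadow estimates are \emph{uniform} in $k$. Non-elementarity is assumed only individually for each $\mu_k$, so the shadow constants must be extracted from joint structural features of the supports rather than from quantitative parameters of a single $\mu_k$. In the intended applications to acylindrically hyperbolic groups this uniformity comes essentially for free: a fixed Schottky-type configuration (a pair of independent loxodromic elements) lies in the common support structure, providing simultaneous shadow control for all the $\mu_k$ and securing the tightness needed for the weak limit to land in $\mathcal{P}(\partial X)$.
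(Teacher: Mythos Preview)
Your opening moves---embedding $X$ into its horofunction compactification $\overline{X}^h$, lifting each $\nu_k$ to a measure $\widehat{\nu}_k$ supported on the infinite horofunctions, and extracting a weakly convergent subsequence $\widehat{\nu}_{k_j}\to\widehat{\nu}$ by compactness of $\overline{X}^h$---match the paper's proof exactly. The divergence comes at what you call the crux: you view $\partial X$ as embedded in $\partial_h X$ via Busemann functions and try to show that the limit $\widehat{\nu}$ does not escape this embedded copy, which you correctly recast as a uniform-in-$k$ tightness statement. As you yourself acknowledge, obtaining such uniformity from the individual non-elementarity of each $\mu_k$ is problematic, and your proposed remedy (a common Schottky pair in the support structure) appeals to hypotheses the lemma does not provide. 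This is a genuine gap in your argument as stated.

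The paper sidesteps the difficulty by going in the opposite direction. Rather than embedding $\partial X$ into the horoboundary and worrying about mass escaping, it uses the \emph{local minimum map} $\phi:\overline{X}^h\to X\cup\partial X$ of Maher--Tiozzo, whose restriction $\phi:\overline{X}^h_\infty\to\partial X$ is continuous and surjective. Since each lifted measure is supported on $\overline{X}^h_\infty$ and satisfies $\phi_*\widehat{\nu}_{k_j}=\nu_{k_j}$, continuity of $\phi$ yields $\nu_{k_j}\to\phi_*\widehat{\nu}$ weakly, with $\phi_*\widehat{\nu}$ automatically a probability measure on $\partial X$. No tightness argument and no uniformity in $k$ is required; the ingredient missing from your plan is precisely this continuous surjection from the (infinite part of the) horoboundary onto the Gromov boundary.
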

\begin{proof}
	For this proof, we will use the \emph{horofunction compactification} $\overline{X}^h$ of $X$. We refer to \cite[Section 3]{MaherTiozzo2018} for the definition of $\overline{X}^h$; see also \cite[Section 3.3]{BallmannGromovSchroeder1985}. We now recall the properties of the horofunction compactification that will be relevant in the rest of the proof.
	
	Given a separable Gromov hyperbolic space $X$, its horofunction compactification $\overline{X}^h$ is a compact metrizable space on which $X$ embeds. The horofunction compactification is partitioned as a disjoint union $\overline{X}^h=\overline{X}^h_{F}\cup \overline{X}^h_{\infty}$, where $\overline{X}^h_{F}$ (resp.\ $\overline{X}^h_{\infty}$) is called the set of \emph{finite} (resp.\ \emph{infinite}) horofunctions. One can construct a map $\phi:\overline{X}^h\to X\cup \partial X$, which is called the \emph{local minimum map}, and which satisfies that the restriction $\phi:\overline{X}^h_{\infty}\to \partial X$ is well-defined, surjective and continuous \cite[Proposition 3.14 \& Corollary 3.15]{MaherTiozzo2018}.
	
	It follows from \cite[Proposition 4.4]{MaherTiozzo2018} that for each $k\ge 1$, the pull-back probability measure $\phi^{-1}_{*}\nu_{k}$ is supported on $\overline{X}^h_{\infty}.$	 By compactness of $\overline{X}^h$, the sequence $\{\phi^{-1}_{*}\nu_{k}\}_{k\ge 1}$ has a weak convergent subsequence $\phi^{-1}_{*}\nu_{k_j}\xrightarrow[k\to \infty]{}\eta$, for $\eta$ a probability measure on $\overline{X}^h$. The surjectivity of $\phi$ implies that $\phi_{*}\left(\phi^{-1}_{*}\nu_{k_j}\right)=\nu_{k_j}$ for all $j\ge 1$, and its continuity that $\phi_{*}\left(\phi^{-1}_{*}\nu_{k_j}\right)=\nu_{k_j}\xrightarrow[j\to \infty]{}\phi_{*}\eta$.  We conclude that $\{\nu_k\}_{k\ge 1}$ has a weak convergent subsequence to a probability measure on $\partial X$.
\end{proof}

In the following proposition, we prove the convergence of harmonic measures on the Gromov boundary $\partial X$ of a separable Gromov hyperbolic space $X$ without assuming that it is proper, so that $\partial X$ is not necessarily compact. The proof follows a similar argument to the second part of the proof of Theorem \ref{thm: convergence of harmonic measures}. The main difference is that the existence of convergent subsequences, which is guaranteed by a compactness argument in Theorem \ref{thm: convergence of harmonic measures}, is now replaced by Lemma \ref{lem: existence of weak accumulation point}.
\begin{prop}\label{prop: ac hyperbolic convergence harmonic measures}
	Let $G$ be a countable group and let $X$ be a separable Gromov hyperbolic space. Suppose that $G$ acts on $X$ by isometries. Let $\mu$, $\mu_k$, $k\ge 1$, be non-elementary probability measures on $G$, and denote by $\nu$, $\nu_k$, $k\ge 1$, the corresponding harmonic measures on the Gromov boundary $\partial X$. Then $\nu_k\xrightarrow[k\to \infty]{}\nu$ weakly.
\end{prop}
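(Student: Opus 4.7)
The plan is to adapt the uniqueness-of-stationary-measure argument from the last paragraph of the proof of Theorem~\ref{thm: convergence of harmonic measures}, replacing the appeal to compactness of $X$ by Lemma~\ref{lem: existence of weak accumulation point}. The key inputs are three: (i)~every subsequence of $\{\nu_k\}_{k\ge 1}$ admits a weakly convergent further subsequence on $\partial X$; (ii)~any such limit is $\mu$-stationary; (iii)~the non-elementary $\mu$ admits a \emph{unique} stationary probability measure on $\partial X$. Combining these, a standard subsequence-of-subsequence argument forces the entire sequence $\{\nu_k\}_{k\ge 1}$ to converge weakly to~$\nu$.

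More concretely, I would argue as follows. Fix an arbitrary subsequence $\{\nu_{k_\ell}\}_{\ell\ge 1}$. Applying Lemma~\ref{lem: existence of weak accumulation point} to the sequence of non-elementary measures $\{\mu_{k_\ell}\}_{\ell\ge 1}$, we obtain a further subsequence $\{\nu_{k_{\ell_m}}\}_{m\ge 1}$ that converges weakly to some probability measure $\eta$ on $\partial X$. Since $G$ acts on $X$ by isometries, its action extends to a continuous action on the Gromov boundary $\partial X$, so the hypotheses of Lemma~\ref{lem: weak limits are mu stationary} are satisfied; combined with the pointwise convergence $\mu_{k_{\ell_m}}(g)\to\mu(g)$ and the fact that each $\nu_{k_{\ell_m}}$ is $\mu_{k_{\ell_m}}$-stationary, Lemma~\ref{lem: weak limits are mu stationary} yields that $\eta$ is $\mu$-stationary. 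By the uniqueness of the $\mu$-stationary measure on $\partial X$ (which was recalled just before the statement, as a consequence of \cite{MaherTiozzo2018} in the general case, and of \cite{Kaimanovich2000} when $X=G$ is a hyperbolic group), we conclude $\eta=\nu$. Since every subsequence has a further subsequence converging to the common limit $\nu$, the full sequence $\{\nu_k\}_{k\ge 1}$ converges weakly to $\nu$.

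The main subtlety, and really the only non-cosmetic point to check, is that Lemma~\ref{lem: weak limits are mu stationary} genuinely applies in this setting: the space $X$ is only assumed separable and Gromov hyperbolic, so $\partial X$ is separable and completely metrizable but not necessarily compact. The continuity of the $G$-action on $\partial X$ still holds because isometries of a Gromov hyperbolic space extend to homeomorphisms of the boundary (with its standard topology via equivalence classes of geodesic rays or, equivalently, via the Gromov product based at any point), so no properness assumption on $X$ is needed. Thus no modification of Lemma~\ref{lem: weak limits are mu stationary} is required; the whole argument is just the combination of (a)~existence of weak accumulation points on a non-compact boundary (Lemma~\ref{lem: existence of weak accumulation point}), (b)~stationarity of accumulation points (Lemma~\ref{lem: weak limits are mu stationary}), and (c)~uniqueness of the stationary measure (Maher--Tiozzo).
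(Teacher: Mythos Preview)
Your proposal is correct and follows essentially the same argument as the paper's own proof: take an arbitrary subsequence, extract a weakly convergent further subsequence via Lemma~\ref{lem: existence of weak accumulation point}, apply Lemma~\ref{lem: weak limits are mu stationary} to see the limit is $\mu$-stationary, invoke uniqueness from \cite[Theorem~1.1]{MaherTiozzo2018} to identify it with $\nu$, and conclude by the subsequence-of-subsequence argument. Your additional remarks on why Lemma~\ref{lem: weak limits are mu stationary} applies (continuity of the $G$-action on $\partial X$, Polish but not necessarily compact) are accurate and make explicit a point the paper leaves implicit; note also that both your argument and the paper's tacitly use the hypothesis $\mu_k(g)\to\mu(g)$, which is not stated in the proposition but is needed for Lemma~\ref{lem: weak limits are mu stationary}.
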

\begin{proof}
	Thanks to  \cite[Theorem 1.1]{MaherTiozzo2018}, the probability measure $\nu$ is the unique $\mu$-stationary probability measure on $\partial X$. Consider any subsequence $\{\nu_{k_{\ell}} \}_{\ell\ge 1}$ of $\{\nu_k\}_{k\ge 1}$. Then, using Lemma \ref{lem: existence of weak accumulation point}, there is a further subsequence $\{\nu_{k_{\ell_m}} \}_{m\ge 1}$ that converges weakly to some probability measure $\eta$ on $\partial X$. Thanks to Lemma \ref{lem: weak limits are mu stationary}, the probability measure $\eta$ is $\mu$-stationary. As a consequence of the uniqueness of the $\mu$-stationary measure, we must have $\eta=\nu$. We conclude that every subsequence of $\{\nu_k\}_{k\ge 1}$ has a further subsequence that converges to $\nu$ on $G$. This implies that the entire sequence $\{\nu_k\}_{k\ge 1}$ converges to $\nu$.
	
\end{proof}

Let us recall that the action of $G$ on $X$ is called \emph{acylindrical} if for every $K\ge 0$ there are $N,R\ge 1$ such that for any $x,y\in X$ with $d_X(x,y)\ge R$, there are at most $N$ elements $g\in G$ such that $d_X(x,g.x)\le K$ and $d_X(y,gy)\le K.$ It is proved in \cite[Theorem 1.2]{ChawlaForghaniFrischTiozzo2022} that, if $\mu$ has finite entropy, then $(\partial X, \lambda)$ is the Poisson boundary of $(G,\mu)$, where $\lambda$ is the hitting measure on $\partial X$. This generalizes \cite[Theorem 1.5]{MaherTiozzo2018} which had the extra assumption of a finite logarithmic moment of $\mu$, as well as previous descriptions of the Poisson boundary for particular classes of groups that fall within the category of acylindrically hyperbolic groups. In particular, the above covers free groups \cite[Theorem 3]{DynkinMaljutov1961} \cite[Théorème 2]{Derrienic1975}, hyperbolic groups \cite[Theorem 8]{Ancona1987} \cite[Theorem 8]{Kaimanovich1994} \cite[Theorems 7.4 \& 7.7]{Kaimanovich2000}, groups with infinitely many ends \cite[Theorem 7.1]{Woess1989} \cite[Theorem 8.4]{Kaimanovich2000}, mapping class groups \cite[Theorem 2.3.1]{KaimanovichMasur1996} and $\mathrm{Out}(F_n)$ \cite[Theorem 0.2 \& 3.3]{Horbez2016} \cite[Theorem 1.3]{NamaziPettetReynolds2014}. The combination of Theorem \ref{thm: convergence of harmonic measures}, Proposition \ref{prop: ac hyperbolic convergence harmonic measures}, and the description of the Poisson boundary for acylindrically hyperbolic groups gives us the following result.
\begin{cor}\label{cor: cont entropy hyperbolic}
	Let $G$ be an acylindrically hyperbolic group. Let $\mu$, $\mu_k$, $k\ge 1$, be a sequence of non-elementary probability measures on $G$ with finite entropy, such that $\lim_{k\to \infty}\mu_k(g)=\mu(g)$ and $\lim_{k\to \infty}H(\mu_k)=H(\mu)$. Then $\lim_{k\to \infty}h(\mu_k)=h(\mu)$.
\end{cor}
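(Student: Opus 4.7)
The plan is to verify the hypotheses of Theorem \ref{thm: convergence of harmonic measures} for acylindrically hyperbolic groups, using the known identification of the Poisson boundary together with Proposition \ref{prop: ac hyperbolic convergence harmonic measures}. First, since $G$ is acylindrically hyperbolic, by definition it admits a non-elementary acylindrical action by isometries on some separable Gromov hyperbolic space $X$. The Gromov boundary $\partial X$ is then a separable and completely metrizable space (as recalled in the paragraph introducing Gromov hyperbolic spaces), and since $G$ acts on $X$ by isometries, it acts on $\partial X$ by homeomorphisms; in particular, it acts by continuous transformations, as required in the statement of Theorem \ref{thm: convergence of harmonic measures}.

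Next, for each $k\ge 1$ and for $\mu$ itself, the Chawla–Forghani–Frisch–Tiozzo result \cite[Theorem 1.2]{ChawlaForghaniFrischTiozzo2022} applies: each $\mu_k$ (resp.\ $\mu$) is non-elementary and has finite entropy, so the pair $(\partial X, \nu_{\mu_k})$ (resp.\ $(\partial X,\nu_\mu)$) is a model for the Poisson boundary of $(G,\mu_k)$ (resp.\ $(G,\mu)$), where $\nu_{\mu_k}$ and $\nu_\mu$ denote the associated hitting measures. This provides the Polish model for the Poisson boundary demanded by Theorem \ref{thm: convergence of harmonic measures}.

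Then I would invoke Proposition \ref{prop: ac hyperbolic convergence harmonic measures}, whose hypotheses are immediately fulfilled: $G$ acts by isometries on the separable Gromov hyperbolic space $X$, the measures $\mu$, $\mu_k$ are non-elementary, and $\mu_k(g)\to\mu(g)$ for every $g\in G$. This yields the weak convergence $\nu_{\mu_k}\to \nu_\mu$ on $\partial X$. Combined with the assumptions that $\mu_k(g)\to \mu(g)$ pointwise and $H(\mu_k)\to H(\mu)$, all hypotheses of Theorem \ref{thm: convergence of harmonic measures} are satisfied (note that compactness of the boundary and uniqueness of the stationary measure, which appear in the final optional clause of that theorem, are not needed here since Proposition \ref{prop: ac hyperbolic convergence harmonic measures} already supplies the weak convergence directly). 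Therefore we conclude $\lim_{k\to\infty} h(\mu_k)=h(\mu)$.

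No real obstacle is anticipated, as this is a direct assembly of previously stated results; the only point requiring mild care is ensuring that the action produced by the acylindrical hyperbolicity hypothesis is indeed one to which both Proposition \ref{prop: ac hyperbolic convergence harmonic measures} and the identification of the Poisson boundary from \cite{ChawlaForghaniFrischTiozzo2022} apply simultaneously. Since acylindrical hyperbolicity is defined precisely via such an action, and both cited results require only that the space be separable Gromov hyperbolic and that the action be non-elementary (plus acylindricity for the Poisson boundary identification), this is automatic.
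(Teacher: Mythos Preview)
Your proposal is correct and follows essentially the same approach as the paper, which derives the corollary by combining Theorem \ref{thm: convergence of harmonic measures}, Proposition \ref{prop: ac hyperbolic convergence harmonic measures}, and the Poisson boundary identification from \cite{ChawlaForghaniFrischTiozzo2022}. Your writeup is in fact more explicit than the paper's one-line justification, and your remark that the compactness/uniqueness clause of Theorem \ref{thm: convergence of harmonic measures} is unnecessary here (since Proposition \ref{prop: ac hyperbolic convergence harmonic measures} already supplies weak convergence) is a helpful clarification.
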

This result was already known: it has been already proved in \cite[Theorem F]{Choi2024}, and also independently by Joshua Frisch and Anna Erschler. In the case of hyperbolic groups, with additional moment conditions on the measures, the above result is proved in \cite[Theorem 2]{ErschlerKaimanovich2013} and \cite[Theorem 2.9]{GouezelSebastienMatheusMacourant2018}.

\subsubsection{Discrete subgroups of linear groups}
Let $G$ be a countable discrete subgroup of $\mathrm{SL}_d(\R)$, $d\ge 2$, and consider the associated flag variety $\mathcal{F}$ (see \cite[Section I.1]{Ledrappier1985} for the definition). It is proved in \cite[Theorem A]{Ledrappier1985} \cite[Theorem 10.3]{Kaimanovich2000} that, if $\mu$ is a non-degenerate probability measure on $G$ with a finite first moment, then there is a unique $\mu$-stationary probability measure $\lambda$ on $\mathcal{F}$ such that $(\mathcal{F},\lambda)$ is the Poisson boundary of $(G,\mu)$. A more general version of this statement is proved in \cite[Theorem 5]{Kaimanovich1985conditional} \cite[Theorem 10.7]{Kaimanovich2000} for discrete subgroups semi-simple Lie groups, and which in particular covers the case where $G$ is a Zariski dense discrete subgroup of $\mathrm{SL}_d(\R)$ and $\mu$ is a non-degenerate probability measure on $G$ with finite entropy and a finite logarithmic moment. The above results together with Theorem \ref{thm: convergence of harmonic measures} imply the following.
\begin{cor}
	Let $G$ be a countable discrete subgroup of $\mathrm{SL}_d(\R)$, $d\ge 2$. Consider $\mu$, $\mu_k$, $k\ge 1$, non-degenerate probability measures on $G$, and such that $\lim_{k\to \infty}\mu_k(g)=\mu(g)$ and $\lim_{k\to \infty}H(\mu_k)=H(\mu)$. Suppose that we are in one of the following situations.
	\begin{enumerate}
		\item The probability measures $\mu$, $\mu_k$, $k\ge 1$, have a finite first moment.
		\item The group $G$ is Zariski dense and the probability measures $\mu$, $\mu_k$, $k\ge 1$, have finite entropy and a finite logarithmic moment.
	\end{enumerate} 
	Then $\lim_{k\to \infty}h(\mu_k)=h(\mu)$.
\end{cor}
\subsubsection{Groups acting on $\mathrm{CAT}(0)$-spaces}
For the basic definitions related to $\mathrm{CAT}(0)$ spaces, we refer to \cite[Part II]{BridsonHaefliger}. We recall that for a proper $\mathrm{CAT}(0)$ space $X$, there is a natural compactification $X\cup \partial_{\infty}X$, where the space $\partial_{\infty}X$ is called the \emph{visual boundary} of $X$ and it consists of equivalence classes of geodesic rays. Consider a $\mathrm{CAT}(0)$ space $X$, and let $G$ be a discrete group that acts properly and cocompactly by isometries on $X$. It follows from \cite[Corollary 6.2]{KarlssonMargulis1999} that, for any non-degenerate probability measure $\mu$ on $G$ with a finite first moment, the visual boundary $(\partial_{\infty}X,\nu)$ is the Poisson boundary of $(G,\mu)$. In \cite[Theorem 1.1]{Bars2022} it is proved that, if $G$ contains a rank one element, then $\nu$ is the unique $\mu$-stationary probability measure on $\partial_{\infty}X$.

Now, suppose that $X$ is a finite dimensional $\mathrm{CAT}(0)$ cube complex. Consider a discrete countable group $G$, and suppose that there is a non-elementary proper action of $G$ on $X$ by automorphisms. Then, there is a compact metric space $\partial_R X$, called the \emph{Roller boundary} of $X$, with the following property: for every non-degenerate probability measure on $G$ with finite entropy and a finite logarithmic moment, there exists a $\mu$-stationary probability measure $\lambda$ on $\partial_R X$ such that $(\partial_R X,\lambda)$ is the Poisson boundary of $(G,\mu)$. This result was first proved in \cite[Theorem 8.4]{NevoSageev2013} under the additional assumptions that $X$ is locally compact and the action of $G$ on $X$ is cocompact, and then in the general case in \cite{Fernos2018}. Furthermore, it is proved in \cite[Corollary 7.3]{FernosLecureuxMatheus2018} that in the above situation, the measure $\lambda$ is the unique $\mu$-stationary probability measure on $\partial_R X$. 

By using the above results together with Theorem \ref{thm: convergence of harmonic measures}, we deduce the following.
\begin{cor}
	Let $G$ be a countable discrete group of isometries of a proper $\mathrm{CAT}(0)$ metric space $X$. Let $\mu$, $\mu_k$, $k\ge 1$, be a sequence of non-degenerate probability measures on $G$ such that $\lim_{k\to \infty}\mu_k(g)=\mu(g)$ and $\lim_{k\to \infty}H(\mu_k)=H(\mu)$. Suppose that we are in one of the following situations.
	\begin{enumerate}
		\item The action of $G$ on $X$ is proper and cocompact, $G$ has a rank one element, and the measures $\mu$, $\mu_k$, $k\ge 1$, have a finite first moment.
		\item  $X$ is a finite dimensional $\mathrm{CAT}(0)$ cube complex, and the measures $\mu$, $\mu_k$, $k\ge 1$, have finite entropy and a finite logarithmic moment.
	\end{enumerate}
	Then $\lim_{k\to \infty}h(\mu_k)=h(\mu)$.
\end{cor}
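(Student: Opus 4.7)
The plan is to apply Theorem \ref{thm: convergence of harmonic measures}, in particular its final assertion: when the Poisson boundary can be realized on a compact metrizable space $Y$ admitting a unique $\mu$-stationary probability measure, the weak convergence $\nu_k \to \nu$ is automatic, so the hypotheses $\lim_{k\to\infty}\mu_k(g)=\mu(g)$ and $\lim_{k\to\infty} H(\mu_k)=H(\mu)$ imply $\lim_{k\to\infty} h(\mu_k)=h(\mu)$. In each of the two cases, the task reduces to (i) producing a compact metrizable $G$-space $Y$ that is simultaneously a model for the Poisson boundary of $(G,\mu)$ and of all $(G,\mu_k)$, and (ii) invoking a uniqueness statement for the $\mu$-stationary probability measure on $Y$.

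In case (1), I would take $Y=\partial_\infty X$, the visual boundary of the proper $\mathrm{CAT}(0)$ space $X$. Since $X$ is proper, $\partial_\infty X$ is compact metrizable, hence Polish, and the isometric $G$-action extends continuously to $\partial_\infty X$. The finite first moment assumption on $\mu$ and on every $\mu_k$, combined with the fact that the action of $G$ on $X$ is proper and cocompact, allows one to invoke the Karlsson--Margulis theorem \cite[Corollary 6.2]{KarlssonMargulis1999} to identify $(\partial_\infty X,\nu)$ as the Poisson boundary of $(G,\mu)$, and similarly $(\partial_\infty X,\nu_k)$ as the Poisson boundary of $(G,\mu_k)$. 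The existence of a rank one element in $G$, together with \cite[Theorem 1.1]{Bars2022}, yields uniqueness of the $\mu$-stationary probability measure on $\partial_\infty X$. The final clause of Theorem \ref{thm: convergence of harmonic measures} then concludes.

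In case (2), I would take $Y=\partial_R X$, the Roller boundary of the finite dimensional $\mathrm{CAT}(0)$ cube complex $X$. This is a compact metric space on which $G$ acts continuously. By Nevo--Sageev \cite[Theorem 8.4]{NevoSageev2013} and Fernós \cite{Fernos2018}, the finite entropy and finite logarithmic moment hypotheses on each of $\mu$ and $\{\mu_k\}_{k\ge 1}$ imply that $(\partial_R X,\lambda)$ and $(\partial_R X,\lambda_k)$ are the Poisson boundaries of $(G,\mu)$ and $(G,\mu_k)$ respectively. The uniqueness of the $\mu$-stationary probability measure on $\partial_R X$ is then given by \cite[Corollary 7.3]{FernosLecureuxMatheus2018}. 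Again the final clause of Theorem \ref{thm: convergence of harmonic measures} yields the conclusion.

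In both arguments, the non-trivial content is entirely imported from the cited literature; the contribution of this paper is Theorem \ref{thm: convergence of harmonic measures} itself, which is what packages weak continuity of harmonic measures into continuity of asymptotic entropy. The only mild subtlety to watch is that the hypotheses ensuring a Poisson boundary identification must hold uniformly for $\mu$ and all $\mu_k$, but this is built into the statement of the corollary; so no real obstacle arises.
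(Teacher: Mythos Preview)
Your proposal is correct and follows essentially the same approach as the paper: in each case you identify the relevant compact boundary space (visual boundary $\partial_\infty X$ in case (1), Roller boundary $\partial_R X$ in case (2)), cite the same references for the Poisson boundary identification (Karlsson--Margulis; Nevo--Sageev and Fern\'os) and for uniqueness of the stationary measure (\cite{Bars2022}; \cite{FernosLecureuxMatheus2018}), and then apply the final clause of Theorem~\ref{thm: convergence of harmonic measures}. This is exactly how the paper proceeds.
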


\subsubsection{Strongly locally discrete subgroups of diffeomorphisms of the circle}
Let $G$ be a subgroup of homeomorphisms of the circle $S^1$ that does not admit an invariant probability measure on $S^1$. In this case, we say that $G$ is \emph{non-elementary}. In \cite[Proposition 5.5]{DeroinKleptsynNavas2007} it is shown that there exists a finite $G$-equivariant quotient $\mathbf{S}$ of $S^1$ that admits a unique $\mu$-stationary probability measure $\lambda$, and such that $(\mathbf{S},\lambda)$ is a $\mu$-boundary (see also the explanation in Section 1.1 of \cite{Deroin2013}). Moreover, it is proved in \cite[Theorem 1.1]{Deroin2013} that, if the elements of $G$ are sufficiently regular, the group $G$ satisfies a strong local discreteness assumption and $\mu$ has suitable moment conditions, then $(\mathbf{S},\lambda)$ is the Poisson boundary of $(G,\mu)$. In particular, this holds if the three following conditions are satisfied simultaneously:
\begin{enumerate}
	\item The group $G$ consists of diffeomorphisms of $S^1$ of class $C^2$, and $G$ does not preserve a probability measure on $S^1$. The latter implies that there is a unique $G$-invariant compact subset of $S^1$ on which every orbit is dense, called the \emph{limit set}.
	\item The group $G$ is \emph{strongly locally discrete in the $C^2$-topology}. This means that there exists a covering of its limit set by intervals $(I_j)_{j\in \mathcal{J}}$ such that, if a sequence of elements $(g_n)_{n\ge 1}$ on $G$ converges to the identity element in the $C^2$-topology on one of the $I_j$, $j\in \mathcal{J}$, then the element $g_n$ is the identity for sufficiently large $n$.
	\item The probability measure $\mu$ on $G$ is finitely supported and non-degenerate.
\end{enumerate}
The following corollary is a direct consequence of the above together with Theorem \ref{thm: convergence of harmonic measures}.
\begin{cor}
	Let $G$ be a countable group of $C^2$-diffeomorphisms of $S^1$, such that the action of $G$ on $S^1$ does not preserve a probability measure on $S^1$. Suppose furthermore that $G$ is strongly locally discrete in the $C^2$-topology. Let $\mu$, $\mu_k$, $k\ge 1$, be a sequence of non-degenerate finitely supported probability measures on $G$ such that $\lim_{k\to \infty}\mu_k(g)=\mu(g)$ and $\lim_{k\to \infty}H(\mu_k)=H(\mu)$. Then $\lim_{k\to \infty}h(\mu_k)=h(\mu)$.
\end{cor}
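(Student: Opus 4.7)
The plan is to apply Theorem \ref{thm: convergence of harmonic measures} with the compact Polish space $X = \mathbf{S}$, where $\mathbf{S}$ is the finite $G$-equivariant quotient of $S^1$ produced by \cite[Proposition 5.5]{DeroinKleptsynNavas2007}. First I would record that since $G$ acts by $C^2$-diffeomorphisms on $S^1$ and $\mathbf{S}$ is a $G$-equivariant quotient of the circle by a finite $G$-invariant partition (depending only on the action of $G$ on $S^1$, not on the step distribution), the group $G$ acts by homeomorphisms (in particular, continuous transformations) on the compact metrizable space $\mathbf{S}$. This makes $\mathbf{S}$ a valid candidate for the space $X$ in the theorem.

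Next I would verify that each random walk $(G,\mu)$ and $(G,\mu_k)$ admits $(\mathbf{S},\lambda)$ and $(\mathbf{S},\lambda_k)$ as its Poisson boundary. Since $\mu$ and all $\mu_k$ are non-degenerate and finitely supported, they automatically satisfy the moment and support hypotheses of \cite[Theorem 1.1]{Deroin2013}, so this theorem identifies the Poisson boundary as $(\mathbf{S},\lambda_*)$, where $\lambda_*$ denotes the corresponding hitting measure. Since $\mu,\mu_k$ are finitely supported, they have finite entropy, and the hypotheses $\lim_{k\to\infty}\mu_k(g)=\mu(g)$ and $\lim_{k\to\infty}H(\mu_k)=H(\mu)$ of Theorem \ref{thm: convergence of harmonic measures} are directly assumed.

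By \cite[Proposition 5.5]{DeroinKleptsynNavas2007}, the $\mu$-stationary probability measure on $\mathbf{S}$ is unique. Since $\mathbf{S}$ is compact, this is exactly the additional situation in the final sentence of Theorem \ref{thm: convergence of harmonic measures} that automatically forces weak convergence $\lambda_k \to \lambda$. Applying that theorem directly yields $\lim_{k\to\infty}h(\mu_k)=h(\mu)$, concluding the proof. There is essentially no obstacle here: the entire content of the argument is absorbed into verifying that Deroin's identification of the Poisson boundary applies simultaneously to all the measures in the sequence with the same compact model $\mathbf{S}$, after which the general criterion of Theorem \ref{thm: convergence of harmonic measures} does the rest.
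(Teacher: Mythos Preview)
Your proposal is correct and follows essentially the same approach as the paper: apply Theorem \ref{thm: convergence of harmonic measures} to the compact space $\mathbf{S}$, using \cite[Theorem 1.1]{Deroin2013} to identify the Poisson boundary for each measure and \cite[Proposition 5.5]{DeroinKleptsynNavas2007} for uniqueness of the $\mu$-stationary measure. The paper's proof is simply the one-line remark that the corollary is a direct consequence of these cited results together with Theorem \ref{thm: convergence of harmonic measures}, and you have spelled out exactly the verifications needed.
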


\subsubsection{Free-by-cyclic groups}
It is proved in \cite[Theorem 1]{GauteroMatheus2012} that if $G$ is a cyclic extension of a free group $F$ and $\mu$ is a non-degenerate probability measure on $G$ with a finite first moment, then there exists a unique $\mu$-stationary probability measure $\lambda$ on the Gromov boundary $\partial F$, and $(\partial F,\lambda)$ is the Poisson boundary of $(G,\mu)$. From this together with Theorem \ref{thm: convergence of harmonic measures} we obtain the following corollary.
\begin{cor}
	Let $G$ be a cyclic extension of a non-abelian free group, and consider non-degenerate probability measures $\mu$, $\mu_k$, $k\ge 1$, on $G$ with a finite first moment and such that $\lim_{k\to \infty}\mu_k(g)=\mu(g)$ and $\lim_{k\to \infty}H(\mu_k)=H(\mu)$. Then $\lim_{k\to \infty}h(\mu_k)=h(\mu)$.
\end{cor}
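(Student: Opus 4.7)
The plan is to invoke Theorem~\ref{thm: convergence of harmonic measures} in a direct manner, taking $X=\partial F$, the Gromov boundary of the non-abelian free group $F$. Then $X$ is a compact metrizable (hence Polish) space---in fact homeomorphic to a Cantor set, since $F$ is finitely generated---and $G$ acts on $X$ by homeomorphisms. Indeed, $F$ is a normal subgroup of $G$, so every $g\in G$ induces an automorphism of $F$ by conjugation, and every automorphism of the hyperbolic group $F$ is a quasi-isometry of $F$ and therefore extends canonically to a homeomorphism of $\partial F$. This gives the required continuous $G$-action on $X$.

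I would then invoke \cite[Theorem 1]{GauteroMatheus2012} for each of $\mu$ and $\mu_k$: since all of these probability measures on $G$ are non-degenerate and have finite first moment, there exist unique stationary probability measures $\nu$ and $\nu_k$ on $\partial F$, and $(\partial F,\nu)$ (respectively $(\partial F,\nu_k)$) is the Poisson boundary of $(G,\mu)$ (respectively of $(G,\mu_k)$). The remaining hypotheses of Theorem~\ref{thm: convergence of harmonic measures} to check are the finiteness of the Shannon entropies and the two convergences $\mu_k(g)\to\mu(g)$ and $H(\mu_k)\to H(\mu)$. The convergences are assumed in the statement of the corollary, while the finiteness of entropies follows from the standard fact that a probability measure on a finitely generated group with finite first moment has finite entropy: grouping by word-length annuli $S_n=\{g\in G:|g|=n\}$ of finite cardinality $|S_n|\le(2|S|+1)^n$, one obtains the bound $H(\mu)\le \ell(\mu)\log(2|S|+1)+C$, where $\ell(\mu)=\sum_g|g|\mu(g)$ and $C$ comes from the entropy of a geometric-type distribution on $\Z_{\ge 0}$ of mean $\ell(\mu)$.

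With all hypotheses verified, the ``in particular'' clause of Theorem~\ref{thm: convergence of harmonic measures} applies, since $\partial F$ is compact and admits a unique $\mu$-stationary probability measure, and the conclusion $\lim_{k\to\infty}h(\mu_k)=h(\mu)$ follows. The proof reduces entirely to verification of hypotheses; the only step requiring any thought is the identification of the continuous $G$-action on $\partial F$ through conjugation on the normal subgroup $F$, which however is routine. I do not expect any serious obstacle, since all of the technical machinery---the identification of the Poisson boundary, the uniqueness of the stationary measure, and the general continuity criterion---is already available by citation.
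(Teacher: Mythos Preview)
Your proposal is correct and follows essentially the same approach as the paper: cite \cite[Theorem 1]{GauteroMatheus2012} to identify the Poisson boundary of $(G,\mu)$ and of each $(G,\mu_k)$ with $(\partial F,\nu)$ and $(\partial F,\nu_k)$, note that $\partial F$ is compact with a unique $\mu$-stationary measure, and then apply the ``in particular'' clause of Theorem~\ref{thm: convergence of harmonic measures}. The paper leaves the verification of the continuous $G$-action on $\partial F$ and the passage from finite first moment to finite entropy implicit, whereas you have spelled these out; both are routine and your justifications are correct.
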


\subsubsection{Wreath products}
One can also apply Theorem \ref{thm: convergence of harmonic measures} to recover a particular case of Theorem \ref{thm: main corollary continuity of asymptotic entropy over Zd} regarding the continuity of asymptotic entropy on wreath products. We first prove the next convergence of harmonic measures on lamplighter boundaries.
\begin{prop}\label{prop: wr app}
	Let $A$, $B$ be countable groups and let $\mu,\mu_k$, $k\ge 1$, be non-degenerate probability measures on $A\wr B$ that define a transient random walk on $B$. Suppose that
	\begin{enumerate}
		\item\label{item: wr1} $\lim_{k\to \infty} \mu_k(g)=\mu(g)$ for each $g\in G$,
		\item\label{item: wr2} there is a finite subset $F\subseteq B$ such that $\supp{\mu_k}\subseteq \{(f,b)\in A\wr B\mid \supp{f}\subseteq F\}$ for each $k\ge 1$, and
		\item\label{item: wr3} $\lim_{k\to \infty}\pesc{\pi_{*}\mu_k}=\pesc{\pi_{*}\mu}$, where $\pi:A\wr B \to B$ is the canonical projection to the base group $B$.
	\end{enumerate}
	 Denote by $\nu$, $\nu_k$, $k\ge 1$, their corresponding hitting measures on $\prod_B A$. Then $\nu_k\xrightarrow[k\to \infty]{}\nu$ weakly.
\end{prop}
Item~\eqref{item: wr2} above can be interpreted as requiring the sequence of probability measures to have a ``uniformly bounded lamp range'', meaning that there exists a fixed finite neighborhood in which the lamplighter person may modify lamps around their current position at each step. Note that this condition together with the transience assumption for the random walk in the base group guarantees the almost sure stabilization of lamp configurations, so that the hitting measures $\nu$, $\nu_k$, $k\ge 1$, are well defined. Note also that in this case the space $\prod_B A$ will in general not be uniquely $\mu$-stationary. As an example, one can think of the case where $A$ is finite and $B$ is amenable. Then $\prod_B A$ is a compact space that must possess an $A\wr B$-invariant probability measure, due to the amenability of $A\wr B$, which will be different from the hitting measure of a random walk.
\begin{proof}[Proof of Proposition \ref{prop: wr app}]
	The space $\prod_B A$ has a countable basis for its topology consisting of unions of cylinders of the form
	\[\mathcal{C}_{\mathbf{p}}\coloneqq \left\{f:B\to A\mid f(g)=\mathbf{p}(g) \text{ for each }g\in \mathrm{dom}(\mathbf{p})\right\},\]
	where $\mathbf{p}:\mathrm{dom}(\mathbf{p})\to A$ is a function from a finite subset $\mathrm{dom}(\mathbf{p})\subseteq B$ to $A$.
	
	To show that $\nu_k\xrightarrow[k\to \infty]{}\nu$ weakly, it suffices to prove that for any such cylinder as above we have 
	$\nu_k(\mathcal{C}_{\mathbf{p}})\xrightarrow[k\to \infty]{}\nu(\mathcal{C}_{\mathbf{p}}).$ Indeed, this follows from the fact that any open set in $\prod_B A$ can be written as a countable union of disjoint cylinders, together with the Portmanteau theorem \cite[Theorem 2.1]{Billingsley}.
	
	Recall that we denote by $(\varphi_n, X_n)$, $n\ge 1$, a trajectory of a random walk on $A\wr B$. Let us introduce the event
	\[
	E_{\infty}=\{\varphi_{\infty}(g)=\mathbf{p}(g) \text{ for each }g\in \mathrm{dom}(\mathbf{p})\},
	\]
	and for each $N\ge 1$ the events
	\[
	E_{N}=\{\varphi_{N}(g)=\mathbf{p}(g) \text{ for each }g\in \mathrm{dom}(\mathbf{p})\}, 
	\]
	\[
	T_N=\{X_n\notin \mathrm{dom}(\mathbf{p})\cdot F^{-1} \text{ for all }n\ge N\},
	\]
	where $F\subseteq B$ is a finite subset such that $\supp{\mu_k}\subseteq \{(f,b)\in A\wr B\mid \supp{f}\subseteq F\}$ for each $k\ge 1$. This is a hypothesis in Item \eqref{item: wr2} above.
	
	Then for each $N\ge 1$ we have
	
	\begin{equation*}\label{eq: decomp}
	\nu_k(\mathcal{C}_{\mathbf{p}})=\P_{\mu_k}(E_\infty)=\P_{\mu_k}(E_N\cap T_N)+\P_{\mu_k}(E_{\infty}\cap T_N^c).
	\end{equation*}
	
	Note also that we have
	\begin{equation*}
	\P_{\mu_k}(E_N\cap T_N)=\sum_{b\in B}\P_{\mu_k}\left(E_N\cap\{X_N=b\}\cap\{X_N^{-1}X_n\notin b^{-1}\mathrm{dom}(\mathbf{p}) F^{-1} \text{ for all }n\ge N\}\right),
	\end{equation*}
	and conditioning together with the independence of the increments, we get
	\begin{equation*}\label{eq: conditioning}
	\P_{\mu_k}(E_N\cap T_N)=\sum_{b\in B}\P_{\mu_k}\left(E_N\cap\{X_N=b\}\right)\P_{\mu_{k}}\left(\{X_n\notin b^{-1}\mathrm{dom}(\mathbf{p}) F^{-1} \text{ for all }n\ge 0\}\right).
	\end{equation*}
	
	Let $\varepsilon>0$. First, let us choose $N\ge 1$ sufficiently large such that $\P_{\mu_k}(T_N^c)<\varepsilon$ for each $k\ge 1$ and $\P_{\mu}(T_N^c)<\varepsilon$. The existence of such an $N$ is guaranteed by the continuity of the escape probability of the random walk in the base group, assured by Item \eqref{item: wr3} in the statement above.  Next, we can find a finite subset $Z\subseteq B$ and $K_1\ge 1$ have $\P_{\mu_{k}}(X_N\notin Z)<\varepsilon$ for all $k\ge K_1$ and $\P_{\mu}(X_N\notin Z)<\varepsilon$. This is assured by the convergence of Item \eqref{item: wr1} above (see also Lemma \ref{lem: pointwise convergence is equivalent to total variation convergence}). Using again the convergence of escape probabilities in the base group, and using the convergence from Item \eqref{item: wr1} and Lemma \ref{lem: convolutions continuity} which guarantees the convergence of the convolutions of the measures, we can find $K_2\ge K_1$ such that for each $k\ge K_2$ we have
	
\begin{equation*}
	\left|
	\begin{aligned}
	&\sum_{b\in Z}\P_{\mu_k}\left(E_N\cap\{X_N=b\}\right)\P_{\mu_{k}}\left(\{X_n\notin b^{-1}\mathrm{dom}(\mathbf{p}) F^{-1} \text{ for all }n\ge 0\}\right)-\\
	&-\sum_{b\in Z}\P_{\mu}\left(E_N\cap\{X_N=b\}\right)\P_{\mu}\left(\{X_n\notin b^{-1}\mathrm{dom}(\mathbf{p}) F^{-1} \text{ for all }n\ge 0\}\right)
	\end{aligned}\right| <\varepsilon
\end{equation*}

	By combining the above equations and using the triangular inequality several times, we conclude that
	\begin{equation*}
	\left|\nu_k(\mathcal{C}_{\mathbf{p}})-\nu(\mathcal{C}_{\mathbf{p}})\right|\le 5\varepsilon
	\end{equation*}
	for each $k\ge K_2$. Since $\varepsilon>0$ was arbitrary, this shows the desired weak convergence.
\end{proof}

Let $A$ be a countable group and let $B$ be a hyper-FC-central countable group. It was proved in \cite[Corollary 1.4]{FrischSilva2024} that for any probability measure $\mu$ on $A\wr B$ with $H(\mu)<\infty$ and such that the lamp configurations stabilize almost surely, the space $\prod_B A$ endowed with the hitting measure is the Poisson boundary of $(A\wr B,\mu)$. This result generalized previous work that identified the Poisson boundary of $A\wr B$ under stronger assumptions both on the measure $\mu$ as well as on the groups $A$ and $B$ by A. Erschler \cite[Theorem 1]{Erschler2011wreath} and R. Lyons and Y. Peres \cite[Theorem 1.1]{LyonsPeres2021}. Combining this with Proposition \ref{prop: wr app} and with Theorems \ref{thm: continuity of range} and \ref{thm: convergence of harmonic measures} we obtain the following particular case of Theorem \ref{thm: continuity asymptotic entropy wreath prods}.

\begin{cor}\label{cor: wr}
		Let $A$ be a countable group and let $B$ be a hyper-FC-central countable group with at least cubic growth. Let $\mu,\mu_k$, $k\ge 1$, be non-degenerate probability measures on $A\wr B$ with finite entropy. Suppose that
	\begin{enumerate}
		\item $\lim_{k\to \infty} \mu_k(g)=\mu(g)$ for each $g\in G$,
		\item $\lim_{k\to \infty} H(\mu_k)=H(\mu)$,
		\item there is a finite subset $F\subseteq B$ such that $\supp{\mu_k}\subseteq \{(f,b)\in A\wr B\mid \supp{f}\subseteq F\}$ for each $k\ge 1$.
	\end{enumerate}
	Then $\lim_{k\to \infty}h(\mu_k)=h(\mu)$.
\end{cor}

We remark that this approach cannot recover Theorem~\ref{thm: main corollary continuity of asymptotic entropy over Zd} in full generality for arbitrary sequences of probability measures with finite entropy. At this level of generality, there is no identification of the Poisson boundary of $A \wr B$, let alone one that simultaneously identifies the Poisson boundaries associated with different probability measures within a common Polish model space. In particular, the hitting measure on the space $\prod_B A$ is not well defined, since lamp configurations need not stabilize in this level of generality; see the last paragraph of Subsection~\ref{subsection: wreath products}.

\bibliographystyle{alpha}
\bibliography{biblio}
\end{document}